\theoremstyle{plain}
\newtheorem{theorem}{Theorem}[section]
\newtheorem*{theorem*}{Theorem}
\newtheorem*{firstthm-intro}{Theorem \ref{firstthm}}
\newtheorem*{reduciblethm-intro}{Theorem \ref{reduciblethm}}
\newtheorem*{primethm-intro}{Theorem \ref{primethm}}
\newtheorem{corollary}[theorem]{Corollary}
\newtheorem{lemma}[theorem]{Lemma}
\theoremstyle{definition}
\newtheorem{remark}[theorem]{Remark}
\newtheorem{definition}[theorem]{Definition}
\newtheorem*{example}{Example}
\theoremstyle{definition}
\newcommand{\C}{\mathbb C}
\renewcommand{\L}{\mathcal L}
\newcommand{\nil}{\varnothing}
\newcommand{\wihat}{\widehat}
\newcommand{\defn}[1]{\emph{#1}}
\newcommand{\boundary}{\partial}
\newcommand{\mc}[1]{\mathcal{#1}}
\newcommand{\nbhd}{N}
\newcommand{\onbhd}{\mathring{\nbhd}}
\begin{document}

   \title[Distance for bridge trisections]{Kirby-Thompson distance for trisections of knotted surfaces}
   \author{Ryan Blair}

   \address{Ryan Blair\\
Department of Mathematics\\
California State University, Long Beach\\
1250 Bellflower Blvd\\
Long Beach, CA 90840}
   \email{ryan.blair@csulb.edu}
   \author{Marion Campisi}
   \address{Marion Campisi \\ San Jos\'e State University\\
One Washington Square\\
San Jos\'e, CA 95192}
\email{marion.campisi@sjsu.edu}
   
   \author{Scott A. Taylor}
   \address{Scott A. Taylor \\ Colby College \\5832 Mayflower Hill \\ Waterville, ME 04901}
   \email{scott.taylor@colby.edu}
   \author{Maggy Tomova}
   \address{Maggy Tomova \\ College of Liberal Arts and Sciences\\ The University of Iowa \\ Iowa City, IA 52242}
   \email{maggy-tomova@uiowa.edu}
   \maketitle
   
   \begin{abstract}
      We adapt work of Kirby-Thompson and Zupan to define an integer invariant $\L(\mathcal{T})$ of a bridge trisection $\mathcal{T}$ of a smooth surface $S$ in $S^4$ or $B^4$.  We show that when $\mc{L}(\mathcal{T})=0$, then the surface $S$ is unknotted. We also show that for a trisection $\mathcal{T}$ of an irreducible surface, bridge number produces a lower bound for  $\mc{L}(\mathcal{T})$. Consequently $\mc{L}$ can be arbitrarily large. %
   \end{abstract}

   \subjclass{MSC (2010): 57Q45, 57M25}
   
   \section{Introduction}

Inspired by Hempel's distance for Heegaard splittings \cite{Hempel}, Kirby and Thompson \cite{KirbyThompson} recently defined a nonnegative integer valued invariant $\L(X)$ of a smooth 4-manifold $X$. They  show that  when $\L(X)=0$, then $X$ is diffeomorphic to the connected sum of the 4-sphere with some number (possibly zero) of copies of $S^1 \times S^3$, $S^2 \times S^2$, and $\C P^2$.  This was extended to be an invariant for smooth 4-manifolds with boundary in \cite{CIMT}, where it was shown that if the invariant takes the value zero on a rational homology ball, then the rational homology ball is a 4-ball $B^4$.  In this paper, we adapt the definition to apply to smooth surfaces $S$ properly embedded in $S^4$ or $B^4$. We prove:

\begin{firstthm-intro}
Suppose that $S \subset S^4$ is a smooth, closed surface with $\L(S) = 0$. Then $S$ is the distant sum of unknotted 2-spheres and unknotted nonorientable surfaces.
\end{firstthm-intro}

As with the previous incarnations of this theorem, the invariant $\L$ is defined using trisections. Trisections are, in some sense, a 4-dimensional version of Heegaard splittings of 3-manifolds. Trisections of both smooth closed 4-manifolds and smooth compact 4-manifolds with boundary were introduced by Gay and Kirby \cite{GayKirby}. Meier and Zupan \cite{MZ1} adapted this definition, defining bridge trisections for smooth surfaces in $S^4$ (and later in other closed 4-manifolds \cite{MZ2}). They show that every surface embedded in a closed 4-manifold has a bridge trisection. Bridge trisections are a 4-dimensional version of bridge position for links in closed 3-manifolds. We consider only trisection surfaces of genus 0. Associated to each bridge trisection $\mc{T}$ of a smooth surface $S$ in $S^4$ are positive integers: the \defn{bridge number} $b$ and the \defn{patch numbers} $c_1$, $c_2$, and $c_3$.  We say that $\mc{T}$ is a $(b; c_1, c_2, c_3)$-trisection of $S$. The minimal value of $b$ over all possible trisections of $S$ is called the \defn{bridge number} $b(S)$ of $S$. It is the case that $2b + \chi(S)\geq 0$ (where $\chi(S)$ is the Euler characteristic of $S$).

We establish links between these numbers, the invariant $\L$, and the topological structure of both $S$ and its trisections.  To state our results, recall that a smooth, closed surface $S \subset S^4$ is \defn{unknotted} or \defn{trivial} if it is orientable and bounds a 3-dimensional handlebody in $S^4$ or if it is nonorientable and is the connected sum of unknotted projective planes \cite{hosokawa1979}. (A projective plane is unknotted if it is obtained by attaching a disk to a half-twisted M\"obius band in the equatorial $S^3 \subset S^4$.) Meier and Zupan prove that a surface $S$ with $b(S) \leq 3$ is unknotted \cite[Theorem 1.8]{MZ1}. A smooth, closed, connected, orientable surface in $S^4$ is \defn{irreducible} if it is nontrivial and not the connected sum of a nontrivial surface and a trivial surface of genus $g \geq 1$. 

\begin{reduciblethm-intro}
If $S \subset S^4$ is a smooth, closed, connected, orientable, irreducible surface then
\[
\L(S) > b(S) - g(S) - 2,
\]
where $g(S)$ is the genus of $S$.
\end{reduciblethm-intro}

Meier and Zupan \cite[Section 5]{MZ1} show that spun torus knots have arbitrarily large bridge number, and so $\L(S)$ can be arbitrarily large for knotted spheres. Combining their work with that of Livingston \cite{Livingston}), it is likely the case that $\L(S)$ can be arbitrarily large for tori as well. 

Theorem \ref{upperbdthm}, which is a more general version of Theorem \ref{reduciblethm}, gives an upper bound on $\L(S)$ such that if it is satisfied, then $S$ is the nontrivial connected sum of two other surfaces $S_1$ and $S_2$ such that $\L(S_1) + \L(S_2) = \L(S)$. A smooth, closed surface $S \subset S^4$ is \defn{prime} if it is nontrivial and not a connected sum or distant sum of nontrivial smooth surfaces.  Irreducible surfaces need not be prime. In fact, it is unknown if any surface is prime; by \cite{Viro}, there is a nontrivial sphere $S \subset S^4$ whose connected sum with an unknotted projective plane $P$ is equivalent to $P$ (see \cite[Section 2.3.2]{CKS}). We prove:
\begin{primethm-intro}
If $S \subset S^4$ is a surface that is smooth, closed, and prime. Then
\[
\L(S) > 2b(S)  + 2\chi(S)  - 9
\]
\end{primethm-intro}

Finally, we remark that the essence of our results also applies to smooth surfaces with boundary that are properly embedded in $B^4$. Jeffrey Meier \cite{Meier-Relative} has considered the definition of bridge trisections (of arbitrary genus) for properly embedded surfaces in 4-manifolds with boundary. Independent of, but subsequent to, his work, we arrived at an equivalent definition for surfaces in the 4--ball. As the theory is still in its infancy, we hope that providing different, more targeted, exposition is helpful. Meier proves a more general version of:
\begin{theorem*}[Meier]
Every properly embedded smooth surface $S$ in $B^4$ has a genus 0 bridge trisection.
\end{theorem*}
By Meier's theorem, $\L$ provides an invariant arising from trisections for, say, slice disks or other surfaces witnessing the 4-ball genus of knots in $S^3$.

\subsection{Outline}

The remainder of this section establishes terminology, conventions and notation. In Section \ref{trisections}, we recall the definitions and constructions related to bridge trisections of closed surfaces in $S^4$ from \cite{MZ1} and generalize them to the case of surfaces with boundary in $B^4$. Section \ref{sec:surgery} describes how bridge surfaces in 3-dimensions and trisection surfaces in 4-dimensions can be cut open along certain spheres. Section \ref{sec:sums} describes the connection between this operation and connected sums and distant sums.  In Section \ref{pants}, we define the pants complex of a bridge surface or relative bridge surface. We use the pants complex to define the invariant $\L$ and prove a sequence of lemmas relevant to the study of $\L$.  Finally, in Section \ref{sec:reducibletrisection}, we prove our main theorems.

\subsection{Terminology, conventions, and notation}

All manifolds appearing in this paper should be understood to be smooth and with (possibly empty) boundary. If $Y$ is a submanifold of $X$, we let $\nbhd(Y)$ and $\onbhd(Y)$ denote closed and open regular neighborhoods of $Y$ in $X$. The interval $[-1,1]$ is denoted $I$.

A simple closed curve $\gamma$ in a disk or sphere $\Sigma$ with marked points (called \defn{punctures}) is \defn{essential} if it is disjoint from the punctures, does not bound an unpunctured or once-punctured disk in $\Sigma$, and does not cobound an unpunctured annulus in $\Sigma$ with $\boundary \Sigma$.  Isotopies of essential curves are proper ambient isotopies of $\Sigma$ relative to the punctures. We will often conflate the isotopy class of a collection of essential curves with its representatives. In our context, this should cause no confusion. The surface $\Sigma$ is \defn{admissible} if there are at least 3 punctures, when $\Sigma$ is a disk, and at least 4 punctures when $\Sigma$ is a sphere. A connected subsurface $P \subset \Sigma$ is a \defn{pair of pants} if it is either a disk with 2 punctures, an annulus with one puncture, or is unpunctured and has $\chi(P) = -1$. 

Suppose that $M$ is a 3--manifold (in our case, the 3-sphere or 3-ball) and that $T \subset M$ is a properly embedded 1--manifold and $S \subset M$ is an embedded surface transverse to $T$. We consider the points $T \cap S$ to be punctures on $S$. Suppose also that $D \subset M$ is a disk with $\boundary D \subset S$ an essential curve and with interior disjoint from $S$ and transverse to $T$. Then $D$ is a \defn{compressing disk} if $D \cap T = \nil$ and a \defn{cut disk} if $|D \cap T| = 1$. A disk that is a compressing disk or a cut disk is a \defn{c-disk}. If $D$ is a c-disk for $\boundary M$, then we also say that $D$ is a c-disk \defn{in} $(M,T)$. If $\tau$ is a 1-manifold properly embedded in $Z = D^2 \times I$, then an annulus properly embedded in $Z$, disjoint from $\tau$ and with one boundary component a curve on each of $D^2 \times \{\pm 1\}$ is called a \defn{spanning annulus}. For convenience we will often say that one component of the boundary of a spanning annulus \defn{bounds} the annulus (even though it actually cobounds the annulus with the other boundary component). 

\subsection{Acknowledgments} Blair was supported by NSF grant DMS-1821254. Campisi was supported by a San Jos\'e State University RSCA Grant.  Taylor was supported by a Colby College Research Grant. We are grateful to Rom\'an Aranda, Chuck Livingston, Jeffrey Meier, Maggie Miller, Alex Zupan, and the anonymous referee for helpful comments on the manuscript.

\section{Bridge trisections}
\label{trisections}

We first define a (genus 0) trisection of $S^4$ and then a (genus zero) bridge trisection of a smooth surface in $S^4$. After that we move on to the definition of relative trisections.

\begin{remark}
All of the trisections we discuss have trisection surfaces that are either the sphere or the disk. We usually omit the adjective ``genus 0'' in what follows.
\end{remark}

\begin{definition}[Gay-Kirby]
   A \emph{$0$-trisection} of the 4--sphere $S^4$ is a decomposition into 4-balls $S^4=X_1\cup_{\Sigma} X_2\cup_{\Sigma} X_3$ with 
   \begin{enumerate}
   \item $B_{ij}=X_i\cap X_j$, $i, j\in\{1,2,3\}$ and $i\neq j$, a 3-ball, and
   \item $\Sigma=X_1\cap X_2\cap X_3=B_{12}\cap B_{23}\cap B_{31}$ a 2-sphere.
   \end{enumerate}
      \end{definition}
   
A \defn{trivial tangle} $(B, \kappa)$ is a 3-ball $B$ containing properly embedded arcs $\kappa$ such that, fixing the endpoints of $\kappa$, we may isotope $\kappa$ into $\boundary B$.  We consider $\partial B$ to be a $2|\kappa|$-punctured surface, with the points $\boundary \kappa$ being the punctures. For expositional convenience, we also set $\boundary_+ B = \boundary B$ and $\boundary_- B = \nil$. A \defn{bridge splitting} for a knot $K\subset S^3$ is a decomposition $(S^3, K)=(B_1,\alpha_1)\cup_{\Sigma}(B_2, \alpha_2)$ with $(B_i, \alpha_i)$ trivial tangles and $\Sigma=\partial B_i$, for $i=1,2$.  The surface $\Sigma$ is the \defn{bridge sphere} of the splitting. A \defn{trivial disk system}  $(X, \mathcal{D})$ is a 4-ball $X$ containing a collection $\mathcal{D}$ of properly embedded disks which are isotopic, relative to their boundary, into $\boundary X$.     

\begin{figure}[ht!]
\labellist
\small\hair 2pt
\pinlabel $\Sigma$ at 260 256
\pinlabel $X_2$ [b] at 265 24
\pinlabel $X_1$ [tl] at 61 401
\pinlabel $X_3$ [tr] at 466 398
\pinlabel $B_{12}$ [bl] at 55 145
\pinlabel $B_{13}$ [t] at 256 492
\pinlabel $B_{23}$ [br] at 451 128
\pinlabel $T_{12}$ [t] at 128 221
\pinlabel $T_{13}$ [b] at 256 402
\pinlabel $T_{23}$ [t] at 390 204
\endlabellist
\centering
\includegraphics[scale=0.5]{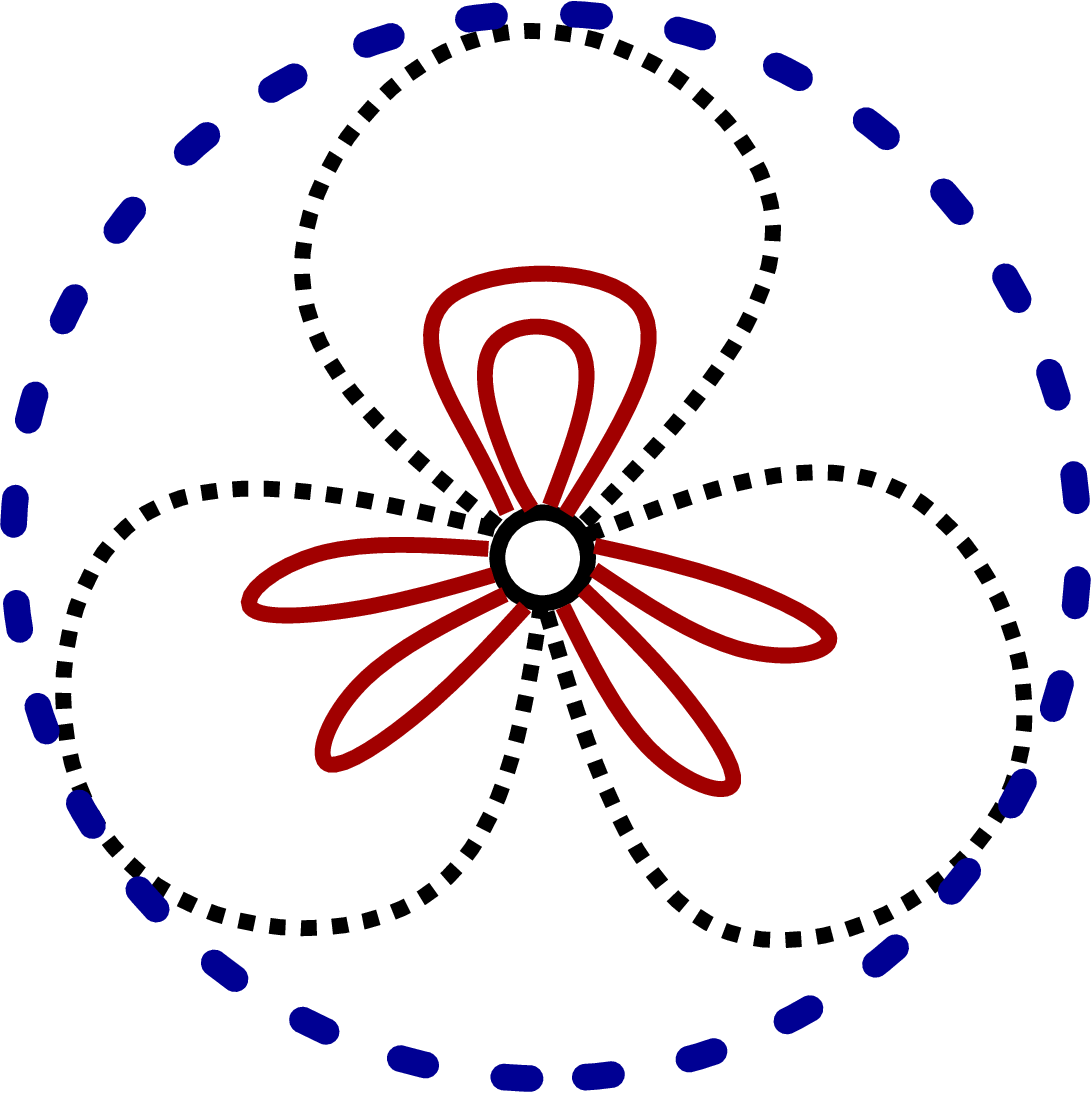}
\caption{A schematic depiction of a bridge trisection. The circle in the center represents the trisection surface, the sphere $\Sigma$. It is the boundary of three 3-balls, depicted with dashed black lines. The union of any two of those is a copy of the 3-sphere which is the boundary of a 4-ball. The union of the 4-balls is $S^4$ and is depicted with the dashed blue circle. In each 3-ball is a trivial tangle, depicted with red arcs. The disks in each 4-ball are not depicted.}
\label{fig:bridgetrisection}
\end{figure}

\begin{definition}[Meier-Zupan]
A \emph{$(b;c_1, c_2, c_3)$-bridge trisection} $\mathcal{T}$ of a  surface $S\subset S^4$ is a 0-trisection $S^4=X_1\cup_{\Sigma}  X_2\cup_{\Sigma}  X_3$  with $b = |S \cap \Sigma|/2$ such that for all $\{i,j,k\} = \{1,2,3\}$, we have:     
    
\begin{enumerate}
\item $(S_j, L_j)=(B_{ij}\cup_{\Sigma} B_{jk}, (B_{ij}\cup_{\Sigma} B_{jk})\cap S)$ is an unlink with $c_j$-components in $S^3$ (henceforth, just \defn{unlink}).
\item $\Sigma$ is a bridge sphere for $(S_j, L_j)$, decomposing it into the trivial tangles  $(B_{ij}, T_{ij})$ and    $(B_{jk}, T_{jk})$.
\item $(X_{i}, \mathcal{D}_{i})=(X_{i}, X_i\cap S)$ is a trivial disk system.
\end{enumerate}
We call $\mathcal{S}=(B_{12},T_{12})\cup(B_{23},T_{23})\cup(B_{31},T_{31})$ the \defn{spine} of the bridge trisection.  We define $B_{ij}=B_{ji}$ and $\alpha_{ij}=\alpha_{ji}$ for all $i,j \in \{1,2,3\}$ such that $i\neq j$. The minimum $b(S)$ of the \defn{bridge number} $b$ of $\mc{T}$ over all bridge trisections $\mc{T}$ of $S$ is called the \defn{bridge number} of $S$. Note that $b$ is a positive integer.
\end{definition}

See Figure \ref{fig:bridgetrisection} for a schematic depiction of a bridge trisection and its spine.  For ease of exposition when we simultaneously handle relative trisections, in the case of a bridge trisection of a surface in $S^4$ as above, we will set $V = \nil$ and also say that
\[
\mathcal{S}=(B_{12},\alpha_{12})\cup(B_{23},\alpha_{23})\cup(B_{31},\alpha_{31}) \cup (V, V \cap \boundary \Sigma)
\]
is the spine of the trisection.

We now turn to trisections of surfaces properly embedded in $B^4$.  The definition is inspired by the definition of Heegaard surfaces for 3-manifolds with boundary, trisections of 4-manifolds with boundary, and of course bridge trisections of surfaces in $S^4$. See Figures \ref{fig:strictlytrivialtangle} and \ref{fig:relativebridgetrisection} for a schematic depiction of the main components.

For a 3-ball $Z$ parameterized as $D^2 \times I$, we define the \defn{positive} (\defn{negative}, resp.) boundary to be $\boundary_\pm Z = D^2 \times \{\pm 1\}$. The \defn{vertical boundary} is $\boundary_v Z = \boundary D^2 \times I$.

\begin{definition}[Gay-Kirby]
   A \emph{$0$-trisection} of $B^4$ is a decomposition into 4-balls $B^4=X_1\cup_{\Sigma} X_2\cup_{\Sigma} X_3$ with 
   \begin{enumerate}
   \item $Z_{ij}=X_i\cap X_j$, $i, j\in\{1,2,3\}$ and $i\neq j$, a 3-ball parameterized as $D^2\times I$, and
   \item $\Sigma=X_1\cap X_2\cap X_3=\boundary_+ Z_{12} = \boundary_+ Z_{23} = \boundary_+ Z_{31}$ a disk.
   \end{enumerate}
      \end{definition}
      
 For later use, we observe the following. In $B^4$, $\nbhd(\Sigma)$ is homeomorphic to $\Sigma \times D^2$. The intersection $\nbhd(\Sigma)$ with $\boundary B^4$ is a solid torus $V' = (\boundary \Sigma) \times D^2 \subset \boundary B^4$. The complementary solid torus $V$ which is the closure of $\boundary B^4 \setminus V'$ contains the three disks $\boundary_- Z_{12}$, $\boundary_- Z_{23}$, and $\boundary_- Z_{13}$ as properly embedded meridian disks. Similarly, for $\{i,j,k\} = \{1,2,3\}$, the 3-sphere $\boundary X_i$ is decomposed into two solid tori $V_i$ and $V'_i$. The solid torus $V_i$ is the union of the 3-balls $Z_{ij}$ and $Z_{ik}$ along $\Sigma$ together with the component of $V \setminus (\boundary_- Z_{12} \cup \boundary_- Z_{23} \cup \boundary_- Z_{13})$ between $\boundary_- Z_{ij}$ and $\boundary_- Z_{ik}$. The solid torus $V'_i$ is the closure of $\boundary X_i \setminus V_i$. The solid torus $V'$ is formed by taking the union of the solid tori $V'_1$, $V'_2$, and $V'_3$ along the annuli $\boundary_v Z_{12}$, $\boundary_v Z_{23}$, and $\boundary_v Z_{13}$.
      
A \defn{relative tangle} $(Z,T)$  is a 3-ball $Z = D^2 \times I$  containing a properly embedded 1-manifold $T$ such that $\boundary T$ is contained in the interior of $\boundary_- Z \cup \boundary_+ Z$. It is \defn{trivial} if there is a properly embedded arc $\alpha \subset \boundary_+ Z$ (necessarily containing the punctures) such that $T$ can be isotoped, relative to $\boundary T$, into $\alpha \times I$. The arc $\alpha$ is called a \defn{trace arc}. A relative tangle is \defn{strictly trivial} if $T$ is trivial and has no closed components and no components with both endpoints on $\boundary_- Z$. A relative tangle is \defn{spanning}, if each arc component of $T$ has one endpoint on each of $\boundary_- Z$ and $\boundary_+ Z$. Spanning relative tangles may have closed components. If $T$ is a trivial tangle, a component of $T$ is a \defn{vertical arc} if it has an endpoint on each of $\boundary_\pm Z$ and a \defn{bridge arc} if it has both endpoints on $\boundary_+ Z$. Figure \ref{fig:strictlytrivialtangle} shows an example of a strictly trivial relative tangle. 

\begin{figure}[ht!]
\labellist
\small\hair 2pt
\pinlabel {$\boundary_+ Z$} [br] at 24 317
\pinlabel {$\boundary_- Z$} [tr] at 40 12
\pinlabel $\alpha$ [br] at 234 295
\pinlabel {$Z = D^2 \times I$} [l] at 306 168
\endlabellist
\centering
\includegraphics[scale=0.5]{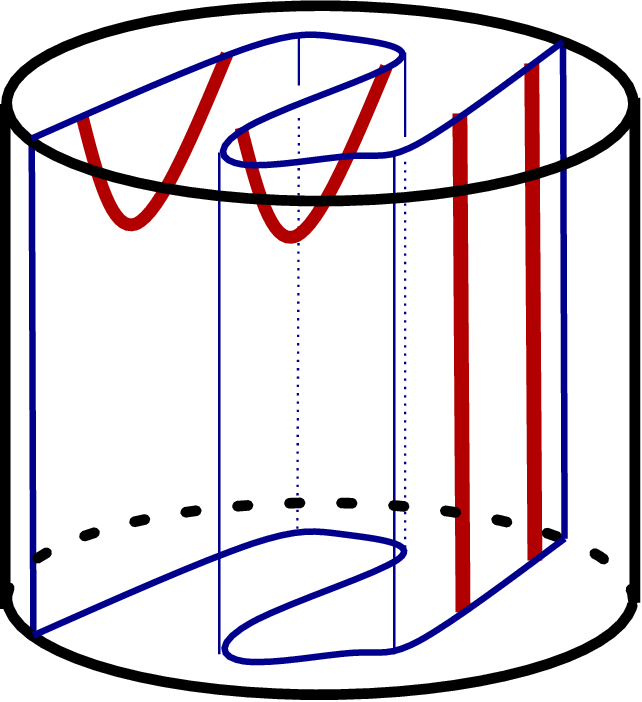}
\caption{An example of a strictly trivial relative tangle $(Z,T)$. It has two bridge arcs and two vertical arcs. One example of a trace arc $\alpha$ along with the disk $\alpha \times I$ are shown.}
\label{fig:strictlytrivialtangle}
\end{figure}

A disk $\Sigma = D^2 \times \{t_0\}$ for some $t_0 \in I\setminus \boundary I$ is a \defn{relative bridge surface} for a relative tangle $(Z,T)$ (with $Z = D^2 \times I$) if the closure of each component of $(Z,T)\setminus \Sigma$ is a strictly trivial tangle with positive boundary $\Sigma$. If $(B_1,\alpha_1)$ and $(B_2, \alpha_2)$ are the closures of the components of $(Z,T)\setminus \Sigma$, we also say that 
$(B_1,\alpha_1)\cup_\Sigma (B_2, \alpha_2)$ is a \defn{relative bridge splitting} of $(Z,T)$.

A link in a solid torus $S^1 \times D^2$ is an $n$-braid if its winding number is $n$ and if it can be isotoped so that the restriction of the map $S^1 \times D^2 \to S^1$ to the link is monotonic. See Figure \ref{fig:relativebridgetrisection} for a schematic depiction of the following definition. 

 \begin{definition}\label{relative bridge}
 A \defn{(genus 0 relative) bridge trisection} $\mc{T}$ of a properly embedded surface $S \subset B^4$ is a 0-trisection $B^4 = (W_1) \cup_\Sigma (W_2) \cup_\Sigma  (W_3)$ such that for all $\{i,j,k\} = \{1,2,3\}$, we have:
\begin{enumerate}
\item $\boundary S \subset S^3 = \boundary B^4$ is an $n$-braid in the solid torus $V$ that is the exterior of $\boundary \Sigma \subset S^3$ where $n = |S \cap \boundary_- Z_{ij}|$
\item $(Z_j, T_j) = (Z_{ij} \cup_\Sigma Z_{jk}, (Z_{ij} \cup_\Sigma Z_{jk})\cap S)$ is a trivial spanning relative tangle.
\item $\Sigma$ is a bridge disk for $(Z_j, T_j)$, decomposing $(Z_j, T_j)$ into the strictly trivial relative tangles $(Z_{ij}, T_{ij})$ and $(Z_{jk}, T_{jk})$.
\item In the solid torus $V_i$, each component of $S \cap V_i$ is either isotopic to a core of $V_i$ or is disjoint from $\boundary_- Z_{ij}$
\item In the 3-sphere $\partial W_i$, the link $S \cap \partial W_i$ is an unlink.
\item $(W_i, \mc{D}_i)=(W_i, W_i\cap S)$ is a trivial disk system.
\end{enumerate}
We call $\mathcal{S}=(Z_{12},T_{12})\cup_\Sigma (Z_{23},T_{23})\cup_\Sigma(Z_{31},T_{31}) \cup (V, V\cap \boundary S)$ the \defn{spine} of the bridge trisection. The disk $\Sigma$ is the \defn{trisection surface} of the trisection and $b(\Sigma) = |\Sigma \cap S|/2$ is the \defn{bridge number of the trisection}. Note that it is a positive integer or half-integer.
\end{definition}

\begin{figure}[ht!]
\labellist
\small\hair 2pt
\pinlabel {$\Sigma$} [b] at 383 429
\pinlabel {$\boundary_+ Z_{12}$} [l] at 441 341
\pinlabel {$\boundary_+ Z_{13}$} [t] at 296 388
\pinlabel {$\boundary_+ Z_{23}$} [t] at 469 388
\pinlabel {$\boundary_- Z_{12}$} [t] at 382 127
\pinlabel {$\boundary_- Z_{13}$} [r] at 137 560
\pinlabel {$\boundary_- Z_{23}$} [l] at 633 560
\endlabellist
\centering
\includegraphics[scale=0.45]{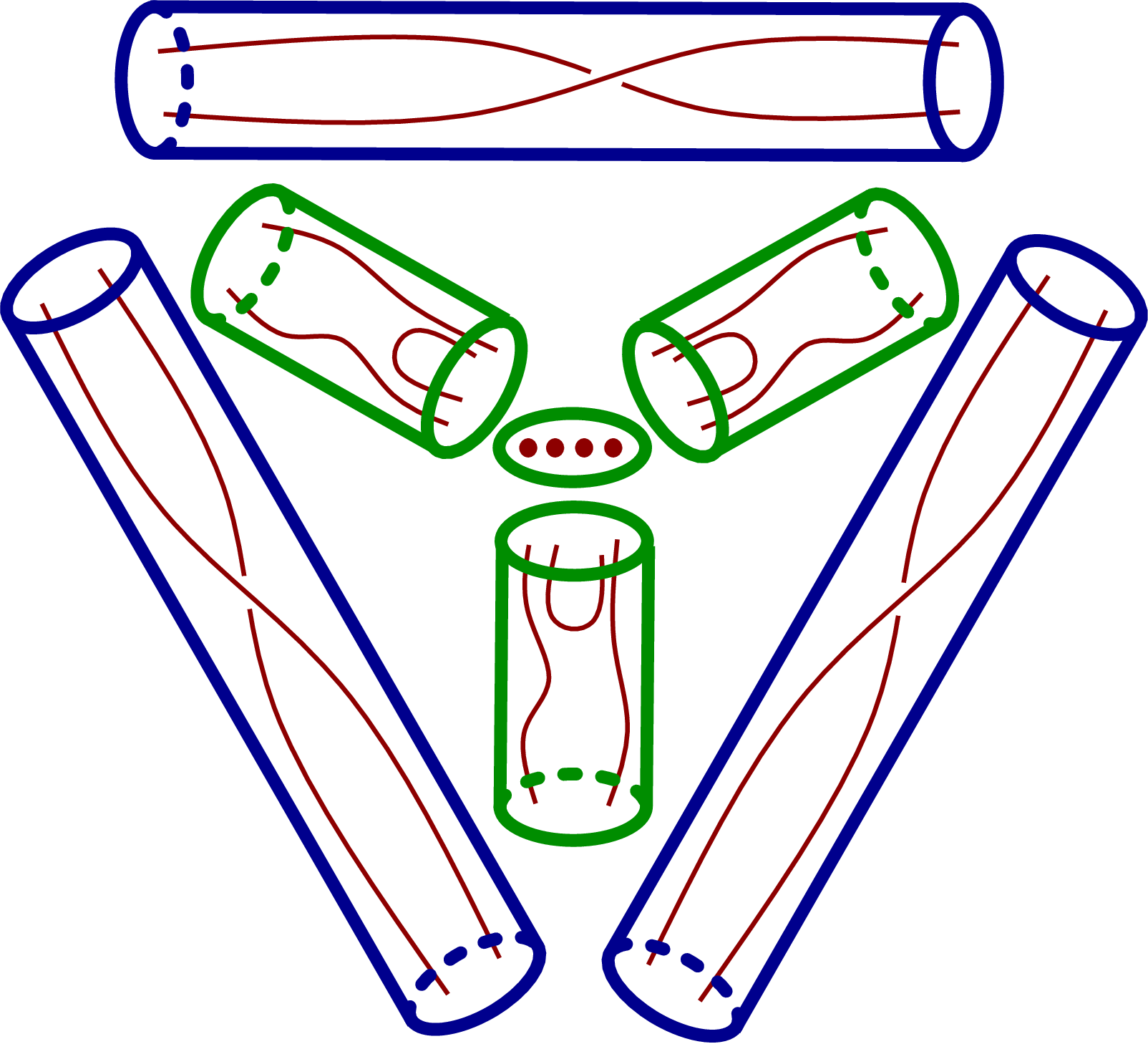}
\caption{A schematic depiction of the spine of a relative bridge trisection, shown without the gluings, without the 4-dimensional pieces, and without a solid torus in the boundary of the 4-ball. The outer three trivial tangles (in blue) form the solid torus $V$ and the braid $(V \cap \boundary S)$. The three inner tangles are the strictly trivial tangles $(Z_{ij}, T_{ij})$. The trisection disk is in the center. }
\label{fig:relativebridgetrisection}
\end{figure}

\begin{example}
In Figure \ref{fig:slicedisk}, we show how to plug diagrams of certain tangles into a template to create a bridge trisection for a slice disk for the square knot. When gluing the tangles $Z_{13}$ and $Z_{23}$ together in the process of verifying that the conditions in Definition \ref{relative bridge} are satisfied, remember to take the mirror image of one of the indicated diagrams, as in \cite{MZ1}. Since $Z_{12}$ has no crossings, we do not need to bother mirroring when it is one of the tangles being glued. To verify that the resulting bridge trisection is a trisection of a disk, we can compute the Euler characteristic. In general, if $\boundary S$ is an $n$-braid and if there are $b'$ bridge arcs in each tangle $Z_{ij}$ and $c_{j}$ closed components of $T_j$, we have $\chi(S) = c_1 + c_2 + c_3 + n - b'$. In our case, $n = 3$, $b' = 2$, and each $c_j = 0$.
\end{example}

\begin{figure}[ht!]
\labellist
\small\hair 2pt
\pinlabel {$\Sigma$} [t] at 147 316
\pinlabel {$Z_{12}$} at 148 236
\pinlabel {$Z_{13}$} at 95 320
\pinlabel {$Z_{23}$} at 197 320
\pinlabel {$1$} [t] at 129 292
\pinlabel {$7$} [t] at 166 292
\pinlabel \rotatebox{-60}{$A$} at 45 244
\pinlabel \rotatebox{60}{$B$} at 250 244
\pinlabel {$C$} at 143 409
\pinlabel {$A$} [b] at 56 131
\pinlabel {$B$} [b] at 146 131
\pinlabel {$C$} [b] at 235 131
\pinlabel {$Z_{13}$} [tl] at 17 96
\pinlabel {$Z_{23}$} [tl] at 106 96
\pinlabel {$Z_{12}$} [t] at 216 96
\pinlabel {$1$} [t] at 23 10
\pinlabel {$2$} [t] at 41 10
\pinlabel {$3$} [t] at 47 10
\pinlabel {$4$} [t] at 62 10
\pinlabel {$5$} [t] at 71 10
\pinlabel {$6$} [t] at 80 10
\pinlabel {$7$} [t] at 91 10
\pinlabel {$1$} [t] at 114 10
\pinlabel {$2$} [t] at 123 10
\pinlabel {$3$} [t] at 137 10
\pinlabel {$4$} [t] at 148 10
\pinlabel {$5$} [t] at 157 10
\pinlabel {$6$} [t] at 164 10
\pinlabel {$7$} [t] at 182 10
\pinlabel {$1$} [t] at 200 10
\pinlabel {$2$} [t] at 207 10
\pinlabel {$3$} [t] at 225 10
\pinlabel {$4$} [t] at 234 10
\pinlabel {$5$} [t] at 240 10
\pinlabel {$6$} [t] at 258 10
\pinlabel {$7$} [t] at 267 10
\endlabellist
\centering
\includegraphics[scale=1.0]{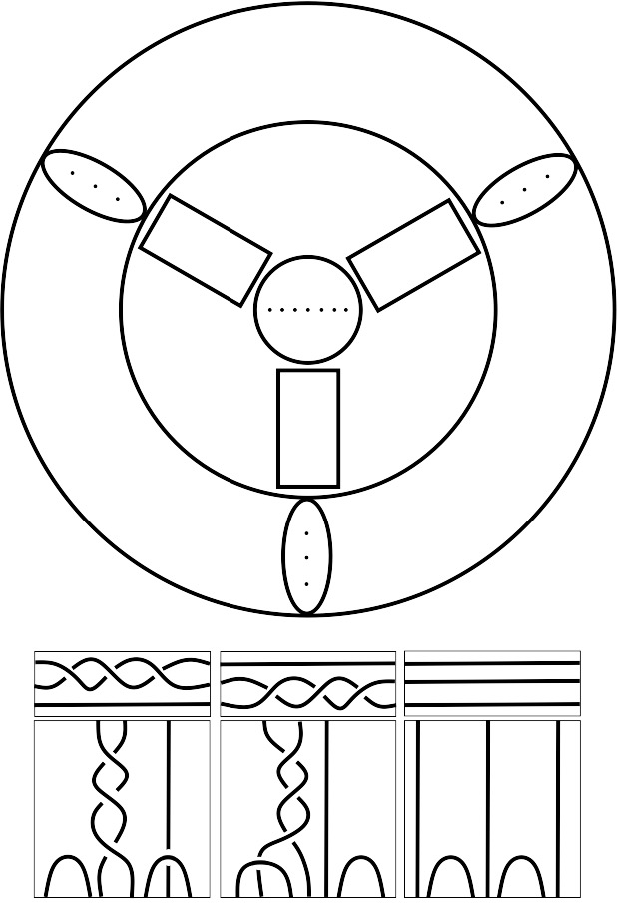}
\caption{A relative bridge trisection of a slice disk for the square knot. This particular trisection was found by \emph{ad hoc} methods. Meier \cite{Meier-Relative} gives a systematic approach.}
\label{fig:slicedisk}
\end{figure}

The proof of \cite[Lemma 2.5]{MZ1} shows that, even in the relative case, we have the following result.

\begin{lemma}[Lemma 2.5 of \cite{MZ1}]\label{MZ-spinedet}
\label{lemma:spine}
If two surfaces in $S^4$ or $B^4$ have trisections with the same spine, then they are isotopic to each other by a smooth proper isotopy relative to the spine.
\end{lemma}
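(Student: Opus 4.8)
The plan is to reconstruct the surface from its spine sector by sector, as in the proof of \cite[Lemma~2.5]{MZ1}, and to verify that the argument is insensitive to the presence of boundary. In the closed case the spine $\mc{S}$ records all of the $3$-dimensional data of the trisection: it contains $\Sigma$ and the three trivial tangles $(B_{ij},T_{ij})$, and hence determines the decomposition $S^4=X_1\cup_\Sigma X_2\cup_\Sigma X_3$ (indeed $S^4\setminus\mc{S}=\interior X_1\sqcup\interior X_2\sqcup\interior X_3$), the $3$-spheres $\partial X_j=B_{ij}\cup_\Sigma B_{jk}\subseteq\mc{S}$, and the unlinks $L_j:=S\cap\partial X_j=T_{ij}\cup T_{jk}$. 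Because each $S\cap X_j$ is a trivial disk system in the $4$-ball $X_j$ with boundary $L_j$, the surface $S$ is recovered from $\mc{S}$ as soon as one knows that \emph{a trivial disk system in a $4$-ball is determined, up to ambient isotopy relative to its entire boundary $3$-sphere, by its boundary unlink}. Given this, for two surfaces $S,S'$ with the same spine one obtains, for each $j$, an ambient isotopy of $X_j$ fixing $\partial X_j$ pointwise and carrying $S\cap X_j$ onto $S'\cap X_j$; chosen to be supported off collars of the $\partial X_j$, these isotopies are the identity on $\mc{S}$ and concatenate to a smooth ambient isotopy of $S^4$, relative to $\mc{S}$, carrying $S$ to $S'$.

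To prove the emphasized statement, let $\mc{D}$ be a trivial disk system in a $4$-ball $X$ with $\partial\mc{D}=L\subseteq\partial X\iso S^3$. First, $\mc{D}$ can be isotoped, relative to all of $\partial X$, into a standard form lying in a collar of $\partial X$ over a system of disjoint disks spanning $L$: push $\mc{D}$ rel $L$ into a small collar of $\partial X$, observe that the induced motion of $\partial X$ fixes $L$, and then cancel that motion by an ambient isotopy of $X$ that is supported in a collar of $\partial X$ and restricts on $\partial X$ to the reverse motion --- the composite fixes $\partial X$ pointwise while still putting $\mc{D}$ in standard form over some spanning disks. Second, any two systems of disjoint disks spanning an unlink in $S^3$ are ambient isotopic in $S^3$ relative to the unlink, by an innermost-circle and Schoenflies argument (peel off one component at a time, working in the ball obtained by cutting $S^3$ along its disk); and since such an isotopy fixes $L$, it propagates through a collar of $\partial X$ --- damped to be the identity on $\partial X$ --- carrying one standard form to another relative to $\partial X$. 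Combining the two steps, every trivial disk system with boundary $L$ is ambient isotopic, relative to $\partial X$, to a single model, which is the claim. This argument uses only $3$-dimensional uniqueness statements propagated through collars, and in particular does not invoke the still-open fact that $\operatorname{Diff}(B^4,\partial B^4)$ is connected.

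In the relative case the same scheme runs with the $4$-balls $W_i$ in place of the $X_i$. The spine of a relative bridge trisection contains the braid $(V,V\cap\partial\mc{K})$, so $\partial\mc{K}$ is already part of $\mc{S}$, and it contains the tangles $(Z_{ij},T_{ij})$; writing $\partial W_i=V_i\cup V'_i$ with $V_i$ the union of $Z_{ij}$, $Z_{ik}$ and a $3$-ball in $V$, and using that $\partial\mc{K}$ --- a braid in the interior of the solid torus $V$ --- is disjoint from $V'_i$, one finds that $\mc{K}\cap\partial W_i=T_{ij}\cup T_{ik}\cup(\partial\mc{K}\cap V_i)$ is an unlink (condition~(5) of Definition~\ref{relative bridge}) determined by $\mc{S}$, bounding the trivial disk system $\mc{K}\cap W_i$ (condition~(6)). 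Applying the emphasized statement in each $W_i$ and concatenating as before produces a smooth ambient isotopy carrying $\mc{K}$ onto $\mc{K}'$ that is the identity on $\bigcup_i\partial W_i$; since the $\partial W_i$ cover $\partial B^4$ and $\mc{S}\subseteq\bigcup_i\partial W_i$, this isotopy fixes $\partial B^4$ pointwise --- so it is a proper isotopy --- and is the identity on $\mc{S}$. The step I expect to be the main obstacle is the emphasized statement, and precisely its ``relative to the entire boundary $3$-sphere'' strength: that is what lets the three sectorial isotopies be glued along the spine, whereas the naive push-into-the-boundary move delivers it only relative to $\partial\mc{D}$. The point that rescues it is that every isotopy one actually needs fixes the boundary unlink, so it can be carried through a collar of the sector's boundary without disturbing that boundary.
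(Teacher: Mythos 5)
Your proof is correct and follows the same route the paper relies on: the paper itself offers no argument here, deferring to \cite[Lemma~2.5]{MZ1} and asserting that its proof adapts to the relative setting, and your write-up is a faithful reconstruction of that sector-by-sector argument (trivial disk systems in each $X_j$ or $W_i$ are determined, up to ambient isotopy relative to the sector's boundary sphere, by the unlink recorded in the spine) together with the bookkeeping needed to see that the spine of a relative trisection likewise determines each unlink $\mc{K}\cap\partial W_i$. The only inessential detour is the ``cancel that motion'' maneuver: since the isotopy of $\mc{D}$ can be taken constant on a collar of $L$ and its image is then bounded away from $\partial X$, the isotopy extension theorem already yields an ambient isotopy supported in $\operatorname{int}X$, so no separate cancellation is required.
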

As a consequence, we consider two trisections to be equivalent if their spines are smoothly properly isotopic via isotopies taking the trisection surfaces to each other.

Finally, we need the following lemma concerning spines without bridge arcs.

\begin{lemma}\label{only arcs}
Suppose that $\mc{T}$ is a relative bridge trisection for a surface $S \subset B^4$ with spine $(Z_{12},T_{12})\cup_\Sigma (Z_{23},T_{23})\cup_\Sigma(Z_{31},T_{31}) \cup (V, \boundary S \cap V)$. Suppose also that each $T_{ij}$ consists only of vertical arcs. Then $S$ is the union of $\boundary$-parallel disks, each having bridge number 1/2 with respect to $\Sigma$.
\end{lemma}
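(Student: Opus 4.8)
The plan is to first determine the topological type of $S$ by an Euler characteristic count, and then to show that the hypotheses force the spine of $\mc{T}$ to be standard, so that Lemma \ref{lemma:spine} identifies $S$ with an explicit union of $\boundary$-parallel disks.

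Set $n = |S \cap \Sigma|$. Since each arc of each $T_{ij}$ is vertical, it has exactly one endpoint on $\Sigma = \boundary_+ Z_{ij}$ and one on $\boundary_- Z_{ij}$; hence each $T_{ij}$ consists of $n$ vertical arcs, $|S \cap \boundary_- Z_{ij}| = n$, and $\boundary S$ is an $n$-braid in $V$ by Definition \ref{relative bridge}(1). As each $T_j = T_{ij}\cup_\Sigma T_{jk}$ is a union of spanning arcs with no closed components, the Euler characteristic formula from the Example gives $\chi(S) = n$. Also $S$ has no closed components: such a component would lie in the interior of $B^4$, hence (since trivial disk systems contain no closed surfaces) in the interior of some $W_i$, making it a closed component of $\mc{D}_i = S \cap W_i$, which is impossible. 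Thus every component of $S$ has nonempty boundary, so $\chi \le 1$ on each; with $\chi(S) = n$ this forces at least $n$ components, while $\boundary S$, being an $n$-braid, has at most $n$ components; hence $S$ has exactly $n$ components, each a disk, and the braid permutation of $\boundary S$ is trivial. Finally each disk, having part of its boundary on $\boundary S$, runs out along a vertical arc of some $T_{ij}$ to a puncture of $\Sigma$; as there are $n$ disks and $n$ punctures, each disk meets $\Sigma$ exactly once and so has bridge number $1/2$ with respect to $\Sigma$. It remains to show these disks are $\boundary$-parallel, for which, by Lemma \ref{lemma:spine}, it suffices to prove that $\mc{S}$ is equivalent to the evident spine of a disjoint union of $n$ $\boundary$-parallel disks of bridge number $1/2$.

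To standardize $\mc{S}$, first use that each $(Z_{ij}, T_{ij})$ is strictly trivial, so $T_{ij}$ isotopes, rel its endpoints, into $\alpha_{ij}\times I$ for a trace arc $\alpha_{ij}\subset\Sigma$; as arcs properly embedded in the square $\alpha_{ij}\times I$ with fixed endpoints on the two horizontal edges are unique up to isotopy, after an isotopy fixing the punctures of $\Sigma$ we may take $T_{ij}$ to be a standard family of $n$ parallel vertical arcs. Doing this for all three $ij$ compatibly along the shared disk $\Sigma$ exhibits $Z_{12}\cup_\Sigma Z_{23}\cup_\Sigma Z_{31}$, with its tangle, as a disjoint union over the $n$ punctures of $n$ standard tripods. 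Now consider the solid torus $V_i$ of Definition \ref{relative bridge}(4), and let $P_i$ be the complementary ball of $V$, so $V_i$ is the union of $Z_{ij}\cup_\Sigma Z_{ik}$ with $P_i$ along the two disks $\boundary_- Z_{ij}$ and $\boundary_- Z_{ik}$. Every component of $S \cap V_i$ contains an arc of $T_{ij}$, hence meets $\boundary_- Z_{ij}$, so by Definition \ref{relative bridge}(4) it is isotopic to a core of $V_i$; by Definition \ref{relative bridge}(5) these core-parallel loops form an unlink in $\boundary W_i = S^3$, which forces their mutual framing to be $0$. Since the part of $S \cap V_i$ inside the ball $Z_{ij}\cup_\Sigma Z_{ik}$ is already a standard family of parallel arcs, this should force the sub-braid $S \cap P_i$ to be a trivial family of $n$ parallel arcs; carrying this out for $i = 1, 2, 3$, taking care that the standardizations agree on the shared meridian disks $\boundary_- Z_{ij}$, makes $(V, \boundary S)$ the trivial braid and hence $\mc{S}$ standard. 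I expect this last step to be the main obstacle: one must deduce triviality of $S \cap P_i$ from the fact that capping it off by a standard product of arcs gives a $0$-framed parallel family of unknotted cores — that is, rule out braiding in $P_i$ that is ``undone'' by the capping — and then reconcile the three local standardizations along the common meridian disks.

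Granting both standardizations, $\mc{S}$ is, by construction, the disjoint union over the $n$ punctures of $\Sigma$ of $n$ copies of the spine of a single $\boundary$-parallel disk of bridge number $1/2$ (one vertical arc in each $Z_{ij}$, one trivial braid strand, glued up), which is realized by a $\boundary$-parallel disk. Applying Lemma \ref{lemma:spine} componentwise then identifies $S$ with a disjoint union of $n$ such disks, completing the proof.
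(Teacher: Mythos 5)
Your proof takes a genuinely different route from the paper's, and it contains the gap you flagged yourself, which turns out to be a real one. The paper never tries to standardize the whole spine. Instead it argues one component at a time: starting from a puncture $p$, the vertical arc of $T_{ij}$ at $p$ runs down to $\boundary_- Z_{ij}$, the braid carries it across $P_i$ to $\boundary_- Z_{ik}$, and it comes back up a vertical arc of $T_{ik}$. Because the braid arcs in $P_i$ are monotone, a circle of $S\cap V_i$ built from $m$ such triples has winding number $m$ in $V_i$; Definition~\ref{relative bridge}(4) forces $m=1$, so the circle $L_i(p)$ meets $\Sigma$ only at $p$ and is a core of $V_i$. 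By Definition~\ref{relative bridge}(6), $L_i(p)$ bounds a component $D_i(p)$ of $\mc{D}_i$, and the union $D_1(p)\cup D_2(p)\cup D_3(p)$ is a bridge-number-$1/2$ disk component $D(p)$ of $S$. Its $\boundary$-parallelism is extracted componentwise from the trivial-disk-system condition, never from a statement about the global braid.

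The step you explicitly deferred — ``one must deduce triviality of $S\cap P_i$ \ldots{} and then reconcile the three local standardizations'' — is the crux, and you have not supplied it. What you need is that an $n$-braid whose closure in an unknotted solid torus is the $n$-component unlink is the trivial braid; this is true but is a theorem (it follows from Birman--Menasco's split-braid/composite-braid results, not from a linking-number count), and you would still have to glue the three local trivializations coherently along the meridian disks. The paper sidesteps this entirely by noting that for a single component the relevant braid is a $1$-braid, which is automatically trivial, so the only global input it needs is that each $D_i(p)$ is a trivial disk in $W_i$. Your Euler characteristic computation and your observation that $S$ has no closed components and consists of $n$ disks, each meeting $\Sigma$ once, are correct (though the claim that a closed component would lie in a single $W_i$ deserves the one-line justification that it cannot meet any $Z_{ij}$ since all of $T_{ij}$ touches $\boundary B^4$, and the claim that each disk meets $\Sigma$ deserves a similar word). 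But the topological-type count is also unnecessary in the paper's approach: the component structure is read directly off the spine. As written, your argument is incomplete; the cleanest fix is to replace the global standardization with the paper's per-puncture trace.
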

\begin{proof}
Suppose that $p \in \Sigma \cap S$ is a puncture. Since each $T_{ij}$ consists only of vertical arcs, $p$ is incident to a vertical arc $\tau_{ij}(p)$ in $T_{ij}$. Since in each solid torus $V_i$, the components of the link  $S \cap V_i$ intersecting $V$ is an $n$-braid where $n$ is the number of components of $S \cap V_i$, in $V_i$, there is an arc $\tau_i(p) \subset (S \cap V_i\cap V)$ joining the endpoint of $\tau_{ij(p)}$ on $\boundary_- Z_{ij}$ to the endpoint of $\tau_{ik}(p)$ on $\boundary_- Z_{ik}$. Observe that $L_i(p) = \tau_{ij}(p) \cup \tau_i(p) \cup \tau_{ik}(p)$ is a component of $S \cap V_i$ and, thus, is isotopic to a core of $V_i$. By the definition of bridge trisection it bounds a properly embedded disk $D_i(p)$ in the 4-ball $W_i$. This disk is a component of $S \cap W_i$. Finally, observe that $D_1(p) \cup D_2(p) \cup D_3(p)$ is a disk component $D(p)$ of $S$ and that it has bridge number 1/2 with respect to $\Sigma$. Also, $D(p)$ is $\boundary$-parallel, as can be seen by piecing together the $\boundary$-parallelism of each third. Since this is true for all $p \in \Sigma \cap S$, the result holds.
\end{proof}

\section{Surgery on spheres}\label{sec:surgery}

In this section, by way of establishing terminology we review the connected sum of manifolds, knots and bridge disks/spheres and establish lemmas that will be of importance later. The next section extends this to trisections.

In general, if $X$ is a smooth $n$-manifold and $Y \subset X$ is a smooth submanifold diffeomorphic to the sphere $S^{n-1}$, we can perform surgery on $X$ along $Y$, by cutting $X$ open along $Y$ and patching in two copies of the $n$-ball $B^n$ via homeomorphisms from $\boundary B^n$ to the two copies of $Y$ in the boundary of the cut-open manifold. Several cases will be of particular interest:
\begin{enumerate}
\item  If $L$ is a properly embedded 1--manifold in a 3-manifold $M$ and if $P \subset M$ is a separating 2-sphere that is disjoint from $L$, then surgery along $P$ produces two distinct 3-manifolds $M_1$ and $M_2$, obtained from $M$ by cutting open along $P$ and pasting two copies $B_1$ and $B_2$ of a 3-ball. The 3-manifolds $M_1$ and $M_2$ contain 1--manifolds $L_1 \subset M_1$ and $L_2 \subset M_2$ that are the (possibly empty) union of components of $L$. 
\item  If $L$ is a properly embedded 1--manifold in a 3-manifold $M$ and if $P \subset M$ is a separating 2-sphere that intersects $L$ in exactly two points (i.e. a 0-sphere), then again we obtain distinct 3-manifolds $M_1$ and $M_2$ containing 1--manifolds $L_1$ and $L_2$. However, in this case $L_1 \cup L_2$ is obtained from $L$ by cutting open $L$ along $P \cap L$ and pasting in two boundary-parallel intervals properly embedded in $B_1$ and $B_2$. 
\item If $M= S^3$ or $D^2 \times I$ and $\Sigma \subset M$ is a bridge sphere or disc for a 1-manifold $L \subset M$ and if $P$ intersects $\Sigma$ in a simple closed curve $s$, then surgery along $P$ also produces surfaces $\Sigma_1 \subset M_1$ and $\Sigma_2 \subset M_2$ obtained by cutting open $\Sigma$ along $s$ and pasting disks properly embedded in $B_1$ and $B_2$. If $P$ intersects $L$ in two points, we choose those disks to intersect in a single point each of the boundary-parallel intervals in $B_1$ and $B_2$ that are glued to $L \setminus P$ to form $L_1$ and $L_2$. It is easy to check that each of $\Sigma_1$ and $\Sigma_2$ is a bridge sphere or disk for $L_1$ and $L_2$. See Figure \ref{L1andL2}.
\end{enumerate}

In any of the above cases, if $|P \cap L| = 0$, we say that $P$ is an \defn{unpunctured summing sphere}. If $|P \cap L| = 2$ we say that $P$ is a \defn{twice-punctured summing sphere}.  In the third situation, a summing sphere where the curve $s$ is essential in $\Sigma$ is called a \defn{reducing sphere} for $\Sigma$.

   \begin{figure}[ht!]
\labellist
\small\hair 2pt
\pinlabel $\Sigma$ at 150 199
\pinlabel $s$ at 185 215
\pinlabel $\Sigma_1$ at 35 65
\pinlabel $\Sigma_2$ at 273 67
\endlabellist
\centering
\includegraphics[width=5in]{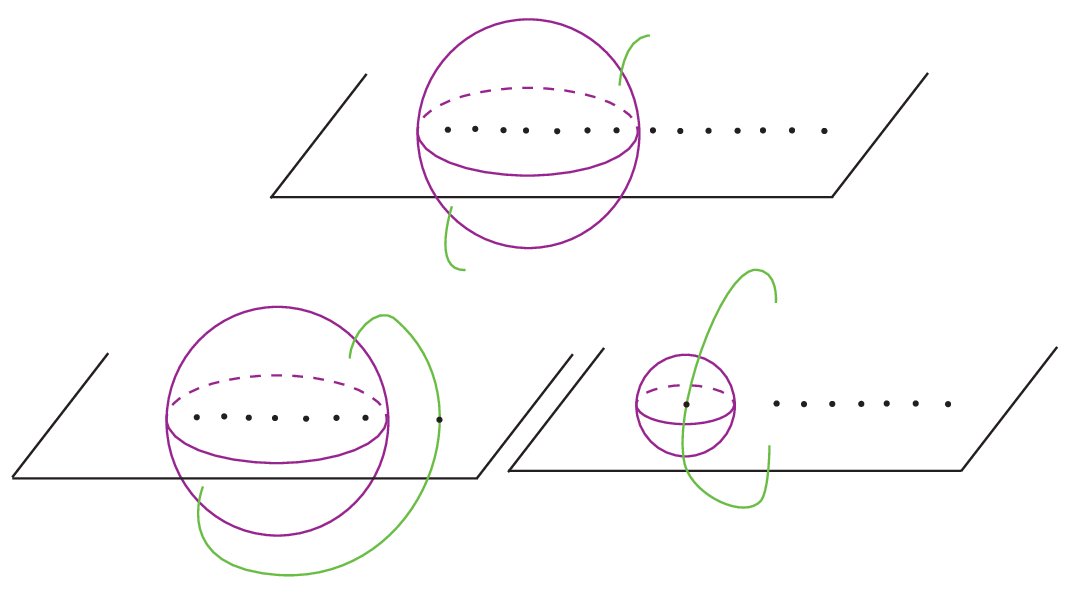}
\caption{Above: A twice-punctured summing sphere $P$ that is a reducing sphere for $\Sigma$. Below: The bridge spheres or disks $\Sigma_1$ and $\Sigma_2$ resulting from surgery on $P$.}
\label{L1andL2}
\end{figure}

We can generalize the construction slightly by also surgering along annuli. This gives a fourth case of particular interest:
\begin{enumerate}
\item[(4)] Suppose that $(Z,L)$ is a relative spanning tangle with $Z = D^2 \times I$ having bridge disk $\Sigma$ and that $P \subset Z$ is a properly embedded annulus disjoint from $(\boundary D^2) \times I$ and intersecting $\Sigma$ in a single simple closed curve $s$ essential in both $\Sigma$ and $P$. Assume also that $P$ is disjoint from $L$. We call $P$ a \defn{reducing annulus}. It is \defn{even} if it bounds a solid cylinder in $Z$ containing an even number of arcs. A reducing annulus $P$ can be isotoped to be vertical in the product structure on $Z$. 
\end{enumerate}

We can perform surgery on a reducing annulus $P$ as follows. Cut $Z$ open along $P$ to obtain $Z'_1$ and $Z_2$. Without loss of generality, assume that $\boundary \Sigma \subset Z'_1$. Then $\Sigma$ intersects $Z'_1$ in a properly embedded annulus $\Sigma'_1$ and $Z_2$ in a properly embedded disk $\Sigma_2$. Observe that $\Sigma_2$ is a bridge disk for $(Z_2, L \cap Z_2)$. To $s \times I \subset \boundary Z'_1$, glue a copy $B$ of $D^2 \times I$ via a homeomorphism of $(\boundary D^2) \times I$ to $s \times I$ making the fibers match. This creates $Z_1$. In $B$, choose a properly embedded disk whose boundary is glued to $s \subset Z'_1$ and let $\Sigma_1$ be the union of $\Sigma'_1$ with that disk. Then $\Sigma_1$ is a bridge disk for $(Z_1, L \cap Z_1)$.

Recall that the \defn{bridge number} of a bridge surface for a properly embedded 1--manifold is equal to half the number of punctures. In either situation (3) or (4), let $b$ be the bridge number of $\Sigma$ and $b_i$ be the bridge number of $\Sigma_i$ for $i = 1,2$. Observe that if $P$ is a twice-punctured summing sphere, then $b_1 + b_2 = b + 1$; otherwise $b_1 + b_2 = b$. In any case, if $P$ is a reducing sphere (i.e. $s$ is an essential curve on $\Sigma$), then $b_1, b_2 < b$. Similarly, let $c_i$ be the number of closed components of $L_i$ and let $v_i$ be the number of arc components of $L_i$. Let $c$ and $v$ be the number of closed components  and arc components of $L$, respectively. If $P$ is a twice-punctured sphere, then $(c_1 + v_1/2) + (c_2 + v_2/2) = (c + v/2) + 1$; otherwise, since $P$ is an unpunctured summing sphere or annulus,  $(c_1 + v_1/2) + (c_2 + v_2/2) = (c + v/2)$. 

Expanding our viewpoint, suppose $W$ is either $B^4$ or $S^4$ and that $S \subset W$ and $\Sigma \subset W$ are smooth surfaces. Suppose $P \subset W$ is a smooth 3-sphere that is either disjoint from $S$ or intersects $S$ in a single simple closed curve and similarly for $\Sigma$. Then we can surger $W$, $S$, and $\Sigma$ along $P$ simultaneously to obtain 4-manifolds $W_1$ and $W_2$ and properly embedded surfaces $S_1, \Sigma_1 \subset W_1$ and $S_2, \Sigma_2 \subset W_2$. One of $W_1$ or $W_2$ is $S^4$ and the other is either $S^4$ or $B^4$. The next section explores the situation when $\Sigma$ is a trisection surface for $S$. First, however, we establish a few lemmas.

\begin{lemma}\label{consistent bounding}
Suppose that $(Z,L)$ is either a link in $S^3$ or a  spanning relative tangle with bridge sphere or disk $\Sigma$ dividing $(Z,L)$ into tangles $(Z_x, L_x)$ and $(Z_y, L_y)$. Suppose also that $s\subset \Sigma$ is an essential curve such that in each of $(Z_x, L_x)$ and $(Z_y, L_y)$ $s$ bounds either a c-disk or (with a curve in $\boundary_- Z_x$ or $\boundary_- Z_y$) a spanning annulus disjoint from $L$. Then one of the following holds:
\begin{enumerate}
\item In each of $(Z_x, L_x)$ and $(Z_y, L_y)$, the curve $s$ bounds a compressing disk.
\item In each of $(Z_x, L_x)$ and $(Z_y, L_y)$, the curve $s$ bounds a cut disk.
\item In each of $(Z_x, L_x)$ and $(Z_y, L_y)$, the curve $s$ bounds a spanning annulus.
\end{enumerate}
Furthermore, it is not the case that both (1) and (2) hold. If, in $(Z_x, L_x)$ or $(Z_y, L_y)$, the curve $s$ bounds a reducing annulus with an \emph{essential} curve in $\boundary_- Z_x$ or $\boundary_- Z_y$ then (3) holds and neither (1) nor (2) hold. If in either $(Z_x, L_x)$ or $(Z_y, L_y)$, the curve $s$ bounds a reducing annulus with an \emph{inessential} curve in $\boundary_- Z_x$ or $\boundary_- Z_y$, then both (1) and (3) hold or both (2) and (3) hold.
\end{lemma}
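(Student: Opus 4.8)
The plan is to prove the whole statement by one mod‑$2$ intersection argument, exploiting that a closed $1$‑manifold in $S^3$, or a spanning $1$‑manifold in a $3$‑ball, meets any separating $2$‑sphere in an even number of points, together with the fact that after cutting along the bridge surface every arc of $L_w$ that touches $\boundary_- Z_w$ is a vertical arc. First I would dispose of the case where $s$ bounds a c‑disk on both sides. If $D_x\subset Z_x$ and $D_y\subset Z_y$ are such c‑disks, then $D_x\cup_s D_y$ is a $2$‑sphere lying in the interior of the ambient manifold (it is disjoint from $\boundary Z$ because $s$, being essential on $\Sigma$, is disjoint from $\boundary\Sigma$); it bounds a ball $Q$ with $Q\cap\boundary Z=\varnothing$, so $L\cap Q$ is a compact $1$‑manifold with $\boundary(L\cap Q)=L\cap(D_x\cup D_y)$, whence $|L\cap D_x|+|L\cap D_y|$ is even. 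As each term is $0$ or $1$, either both are $0$ (two compressing disks, (1)) or both are $1$ (two cut disks, (2)). Running the same argument with both disks on one side shows $s$ cannot bound both a compressing disk and a cut disk there, so (1) and (2) are incompatible. In the $S^3$ case there are no spanning annuli and this finishes the proof, so henceforth $(Z,L)$ is a spanning relative tangle.

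The heart is the mixed situation: $s$ bounds a c‑disk $D_x\subset Z_x$ and a spanning annulus $A_y\subset Z_y$, with $A_y$'s other boundary curve $s'_y\subset\boundary_- Z_y$ bounding a subdisk $\Delta'_y\subset\boundary_- Z_y$ (disjoint from $\boundary(\boundary_- Z_y)$) with $m'_y=|L\cap\Delta'_y|$ punctures. I would glue $D_x$ to $A_y$ along $s$ and cap the remaining boundary $s'_y$ with $\Delta'_y$; since $Z_x\cap Z_y=\Sigma$ this is an embedded $2$‑sphere bounding a ball $Q\subset Z$ with $Q\cap\boundary Z=\Delta'_y$. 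Counting $\boundary(L\cap Q)$ (which lies on $D_x$ or on $\Delta'_y$, as $A_y$ is disjoint from $L$) gives $m'_y\equiv|L\cap D_x|\pmod 2$. If $m'_y\ge 2$, then, because at most one endpoint of the arc $1$‑manifold $L\cap Q$ lies on $D_x$ (and $L$ has no closed components, being the union of two strictly trivial tangles), some arc component $\gamma$ of $L\cap Q$ has both endpoints on $\Delta'_y\subset\boundary Z$; those endpoints are then both in $\boundary L$, so $\gamma$ is an entire component of $L$ with both endpoints on the single end $\boundary_- Z_y$ of $Z$, contradicting that $L$ is spanning. Hence $m'_y\le 1$, so $m'_y=|L\cap D_x|$ and $A_y\cup_{s'_y}\Delta'_y$ is a c‑disk on side $y$ of the same type as $D_x$; this gives (1) if $D_x$ is compressing and (2) if $D_x$ is a cut disk. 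Combined with the previous paragraph and the remaining case (neither side has a c‑disk, so both bound spanning annuli and (3) holds), the trichotomy is proved.

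For the last two sentences I would first observe that a reducing annulus $P$ of $(Z,L)$ meets $\Sigma$ in $s$ and hence restricts to a spanning annulus on each side, so (3) holds. Making $P$ vertical (as allowed), write $P=c\times I$; since $L$ is disjoint from $P$ every component of $L$ lies on one side of $P$, and if the curve $s'_x=P\cap\boundary_- Z_x$ cobounded an unpunctured annulus with $\boundary(\boundary_- Z_x)$, no arc of $L$ could enter $Z$ through that annulus, so all of $L$ — hence all of $L\cap\Sigma$ — would lie on the $\Delta'_x$‑side of $P$, forcing $s$ to cobound an unpunctured annulus with $\boundary\Sigma$, contrary to $s$ being essential. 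Thus $s'_x$ is either essential in $\boundary_- Z_x$ (so $|L\cap\Delta'_x|\ge 2$) or bounds a subdisk of $\boundary_- Z_x$ with at most one puncture. In the latter case $A_x\cup\Delta'_x$ is a c‑disk on side $x$, and the mixed‑case argument upgrades it to a matching c‑disk on side $y$, yielding (1)$\&$(3) or (2)$\&$(3). In the former case, were $s$ to bound a c‑disk $D$ on side $x$, the ball cut off from $Z_x$ by $D$ on the $\Sigma$‑side would contain every arc of $L_x$ incident to the subdisk $\Delta\subset\Sigma$ bounded by $s$; but the (at least two) vertical arcs of $L_x$ with an endpoint on $\Delta'_x$ all have their other endpoint on $\Delta$, since they avoid the vertical annulus $A_x=P\cap Z_x$ and start on its $\Delta'_x$‑side, so at least two vertical arcs would be trapped away from $\boundary_- Z_x$ — impossible (at most one of them could cross $D$). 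Hence $s$ bounds no c‑disk on side $x$, so neither (1) nor (2) holds.

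The main obstacle is not a single deep point but the bookkeeping in the last two paragraphs: one must track precisely which region of $\boundary Z$ lies in each ball, hence which points of $\boundary L$ it traps, and must use that a spanning annulus disjoint from $L$ in $D^2\times I$ can be taken vertical. The unifying idea that makes every case work is that the spanning hypothesis on $L$ promotes the parity of $|L\cap D|$ to an invariant of the curve $s$ alone, so the two sides can never disagree about the type of the disk or annulus $s$ bounds.
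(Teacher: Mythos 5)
Your strategy---cutting along spheres assembled from c-disks and spanning annuli and using parity plus the spanning hypothesis to control puncture counts---closely parallels the paper's own proof. The trichotomy, the incompatibility of (1) and (2), and the argument that an essential $s'_x$ forbids a c-disk on side $x$ (via your direct intersection count of the $\geq 2$ vertical arcs from $\Delta'_x$ against $D$) are all sound in spirit; the last is a mild and, if anything, cleaner variant of the paper's step that builds the sphere $D \cup P_x \cup E_x$ and reads off that $s_x$ is inessential.

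There are two flaws in the details. First, the parenthetical ``$L$ has no closed components, being the union of two strictly trivial tangles'' is simply false: the paper notes explicitly that spanning relative tangles may have closed components (a bridge arc in $Z_x$ matched with one in $Z_y$ produces a circle in $L$). Luckily this does not affect your parity count, since closed components of $L\cap Q$ contribute nothing to $\partial(L\cap Q)$, but the claim should be removed. Second, and more substantively, your attempt to rule out the case where $s'_x$ cobounds an unpunctured annulus with $\partial(\partial_- Z_x)$ has a gap: the inference ``no arc of $L$ enters through that annulus, so all of $L$---hence all of $L\cap\Sigma$---lies on the $\Delta'_x$-side of $P$'' fails when $L$ has closed components lying on the far side of the vertical annulus $P$. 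Such a component never meets $\partial_\pm Z$, so the unpunctured annulus constrains it not at all, yet it punctures $\Sigma$ on the outer side of $s$; thus $s$ is not forced to be $\partial$-parallel in $\Sigma$ and your contradiction evaporates. In that case $s'_x$ is inessential by the paper's definition but $\Delta'_x$ may have $\geq 2$ punctures, so your subsequent step ``$A_x\cup\Delta'_x$ is a c-disk'' does not apply. You need either to handle this subcase separately, or to make explicit (as the paper's proof does only implicitly) that the word ``inessential'' in the last sentence of the lemma is being read as ``bounds a disk with at most one puncture'' rather than in the full sense defined earlier in the paper.
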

\begin{proof}
Let $E \subset \Sigma$ be a disk with boundary $s$ and suppose $D$ is a c-disk in $(Z_x, L_x)$ or $(Z_y, L_y)$ with boundary $s$. Then $D \cup E$ is a 2-sphere in $S^3$ or $B^3$ and, therefore, separates. Thus, if $E$ has an odd number of punctures, then $D$ is a cut disk and if $E$ has an even number of punctures $D$ is a compressing disk. Therefore, if $s$ bounds a c-disk in both $(Z_x, L_x)$ and $(Z_y, L_y)$ then exactly one of (1) or (2) holds. Consequently, without loss of generality, we may suppose that $s$ bounds a c-disk $D$ in $(Z_y, L_y)$ and a spanning annulus $P$ in $(Z_x, L_x)$ that is disjoint from $L$ with a curve $s' \subset \boundary_- Z_x$. Let $E' \subset \boundary_- Z_x$ be the disk with boundary $s'$. Then $E' \cup P \cup D$ is a sphere in $Z$ bounding a ball containing the properly embedded disk $E$. Since $(Z,L)$ is a spanning relative tangle, $E'$ and $D$ must both have one puncture or both have no punctures. In particular, $s'$ is inessential. After a small isotopy to make it properly embedded, $E' \cup P$ is a c-disk in $(Z_x, L_x)$, so either (1) or (2) holds. Hence, we can assume (3) holds. Then $s$ bounds vertical annuli $P_x$ and $P_y$ in $(Z_x, L_x)$ and $(Z_y, L_y)$ disjoint from $L$ with curves $s_x \subset \boundary_- Z_x$ and $s_y \subset \boundary_- Z_y$, respectively. Let $E_x \subset \boundary_- Z_x$ and $E_y \subset \boundary_- Z_y$ be the disks bounded by $s_x$ and $s_y$. If $s$ also bounds a c-disk $D$ in $(Z_x, L_x)$ (say) then $D \cup P_x \cup E_x$ is a sphere in $Z$ and we see that $s_x$ is inessential. Thus, if one of $s_x$ or $s_y$ is essential, neither (1) nor (2) holds, while if both are inessential then (1) or (2) also holds.
\end{proof}

\begin{corollary}[The Consistent Bounding Corollary]\label{consistent bounding cor}
Suppose that $(Z_1, L_1)$, $(Z_2, L_2)$, and $(Z_3, L_3)$ are all either trivial tangles in $B^3$ or strictly trivial tangles in $D^2 \times I$ such that for each $i \neq j$, $\boundary_+ Z_i = \boundary_+ Z_j = Z_i \cap Z_j$ and $(Z_i, L_i) \cup (Z_j, L_j)$ is either an unlink or a spanning trivial relative tangle with bridge sphere or disk $\Sigma = \boundary_+ Z_i = \boundary_+ Z_j$. Suppose that in $\Sigma$ there exists an essential curve $s$ such that in each of $(Z_1, L_1)$, $(Z_2, L_2)$, and $(Z_3, L_3)$, the curve $s$ bounds either a c-disk or spanning annulus. Then one of the following holds:
\begin{enumerate}
\item In each $(Z_i, L_i)$, the curve $s$ bounds a compressing disk.
\item In each $(Z_i, L_i)$, the curve $s$ bounds a cut disk.
\item In each $(Z_i, L_i)$, the curve $s$ bounds a spanning annulus with a curve of $\boundary_- Z_i$ 
\end{enumerate}
\end{corollary}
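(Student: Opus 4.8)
My plan is to apply Lemma~\ref{consistent bounding} to each of the three pairs $(Z_i, L_i) \cup (Z_j, L_j)$ and then to patch the resulting conclusions together; the patching is driven by one parity observation together with the two ``reducing annulus'' clauses of that lemma.

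First I would record two elementary facts. Since the arcs of $L_i$ and $L_j$ join along $\Sigma$ into a $1$--manifold, the puncture set $L_i \cap \Sigma$ is independent of $i$; hence the subdisk $E \subset \Sigma$ with $\boundary E = s$ has a well--defined number $p$ of punctures, and $p \ge 2$ because $s$ is essential. For the parity observation, suppose $s$ bounds a c--disk $D$ in some $(Z_i, L_i)$. Then $D$ separates $Z_i$ into two $3$--balls, one of which, $N$, has boundary $D \cup E$; the $1$--manifold $L_i \cap N$ is properly embedded in $N$ with boundary consisting of the $p$ punctures of $E$ together with the $|L_i \cap D|$ points of $L_i \cap D$, so $p + |L_i \cap D|$ is even. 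Therefore $D$ is a compressing disk when $p$ is even and a cut disk when $p$ is odd, regardless of $i$ and of the choice of $D$.

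Next, note that Lemma~\ref{consistent bounding} applies to each pair: $(Z_i, L_i) \cup (Z_j, L_j)$ is an unlink or a spanning trivial relative tangle with bridge surface $\Sigma$, by hypothesis $s$ bounds a c--disk or a spanning annulus (automatically disjoint from $L$) in each of the two tangles, and any spanning annulus bounded by $s$ is a reducing annulus because $s$ is essential in $\Sigma$. I would then split into two cases. In Case~1, some $(Z_i,L_i)$ contains a reducing annulus bounded by $s$ whose curve on $\boundary_- Z_i$ is essential in $\boundary_- Z_i$. Then for each $j \ne i$, the ``essential curve'' clause of Lemma~\ref{consistent bounding} applied to the pair $\{i,j\}$ forces conclusion~(3) for that pair, so $s$ bounds a spanning annulus with a curve of $\boundary_- Z_j$ in $(Z_j, L_j)$; combined with the reducing annulus in $(Z_i, L_i)$ this is exactly conclusion~(3) of the corollary. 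In Case~2, no such tangle exists, so in every $(Z_i, L_i)$ each spanning annulus bounded by $s$ has an inessential curve on $\boundary_- Z_i$ and hence is a reducing annulus with inessential curve. I claim $s$ bounds a c--disk in each $(Z_i, L_i)$: if it does we are done, and otherwise $s$ bounds such a reducing annulus, whereupon the ``inessential curve'' clause of Lemma~\ref{consistent bounding} applied to a pair containing $i$ shows that ``(1) and (3)'' or ``(2) and (3)'' holds for that pair, so in particular $s$ bounds a c--disk in $(Z_i, L_i)$ after all. Having shown that $s$ bounds a c--disk in all three tangles, the parity observation forces these to be all compressing disks (if $p$ is even) or all cut disks (if $p$ is odd), which is conclusion~(1) or~(2).

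The step I expect to be the main obstacle is precisely this conversion in Case~2 from a spanning annulus back to a c--disk in a single tangle: a spanning annulus with an inessential $\boundary_-$ curve need not cap off \emph{directly} to a c--disk, since the capping subdisk of $\boundary_- Z_i$ could carry all of the vertical punctures, so the conversion is not purely local to one tangle. The plan circumvents this by invoking the ``inessential curve'' clause of Lemma~\ref{consistent bounding}, which has already performed exactly this bookkeeping for a pair of tangles. The only other point requiring care is verifying, for each of the three pairs, that the hypotheses of Lemma~\ref{consistent bounding} hold; this is immediate from the hypotheses of the corollary.
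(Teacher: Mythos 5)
Your proof is correct and uses essentially the same ingredients as the paper's: the three conclusions of Lemma~\ref{consistent bounding} applied pairwise, together with the essential/inessential $\boundary_-$-curve clauses of that lemma. You organize the case analysis around whether some spanning annulus has essential $\boundary_-$ curve (rather than, as the paper does, around which conclusion of Lemma~\ref{consistent bounding} holds for the pair $\{1,2\}$) and you make the parity argument explicit rather than leaning on the ``(1) and (2) cannot both hold'' clause, which arguably makes the deduction more transparent; but the route is substantively the same.
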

\begin{proof}
This is almost immediate from Lemma \ref{consistent bounding}. For $i, j = 1,2,3$ with $i \neq j$, use the notation $(ij.N)$ to indicate that conclusion ($N$) from Lemma \ref{consistent bounding} holds for $(Z_i, L_i)$ and $(Z_j, L_j)$. Suppose that  $(12.1)$ or $(12.2)$ holds. If $(13.1)$, $(13.2)$, or $(23.1)$, or $(23.2)$ hold, we are done by Lemma \ref{consistent bounding}. So suppose that $(13.3)$ and $(23.3)$ hold. This implies Conclusion (3). Suppose, therefore, that $(12.3)$ holds and $(12.1)$ and $(12.2)$ do not.  If $(13.3)$ or $(23.3)$ hold, then we again have Conclusion (3). Thus, (13.1) or (13.2) hold and also (23.1) or (23.2). Again, by Lemma \ref{consistent bounding}, we are done.
\end{proof}

In our quest to find spheres or annuli to surger along, we will make use of the following lemma. In the case when $\tau$ is the unknot, this is essentially due to Otal \cite{Otal}. In the case of relative tangles, after applying a trick, described below, Hayashi and Shimokawa \cite{HS1} handled the case when every component of $\tau$ is either an arc with both endpoints on $\boundary_+ Z$ or an arc with one endpoint on each of $\boundary_\pm Z$. The case when $Z \setminus \tau$ contains an essential sphere is also due to Hayashi-Shimokawa \cite[Theorem 1.4]{HS2}.

\begin{lemma}\label{reduction lem}
Suppose that $(Z, \tau)$ is either an $(S^3, \text{ unlink})$ pair or a spanning relative trivial tangle with $\tau \neq \nil$ and that $\Sigma$ is a bridge sphere or disk, respectively. Then the following hold:
\begin{enumerate}
\item If $Z\setminus \tau$ contains an essential sphere, then there exists an unpunctured reducing sphere for $\Sigma$.
\item If  one of the following holds:
\begin{itemize}
\item $\tau$ is the unknot and $\Sigma$ is a sphere such that $|\tau \cap \Sigma| = 4$, or 
\item $\tau$ is a single arc and $\Sigma$ is a disk such that $|\tau \cap \Sigma| = 3$, 
\end{itemize}
 then there exist compressing disks for the tangles on either side of $\Sigma$ with boundaries on $\Sigma$ intersecting exactly twice.
\item If one of the following holds:
\begin{itemize}
\item $\tau$ is the unknot, $\Sigma$ is a disk and $|\tau \cap \Sigma| \geq 4$, 
\item $\tau$ is the unknot, $\Sigma$ is a sphere and $|\tau \cap \Sigma| \geq 6$, or
\item $\tau$ is a collection of arcs, $\Sigma$ is a disk, and $\tau$ contains an arc intersecting $\Sigma$ in 3 or more points,
\item $\tau$ is a collection of arcs, $\Sigma$ is a disk, and $|\tau \cap \Sigma|\geq 4$,
\end{itemize}
then there exists a twice-punctured reducing sphere for $\Sigma$.
\item If $\tau$ is the union of three or more arcs, $\Sigma$ is a disk, and each arc of $\tau$  intersects $\Sigma$ in exactly one point, then there exists an even reducing annulus for $\Sigma$. 
\end{enumerate}
\end{lemma}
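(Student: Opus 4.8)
The plan is to deduce each of the four conclusions from the thin-position results of Otal \cite{Otal} and Hayashi--Shimokawa \cite{HS1,HS2}, treating the absolute case $(S^3,\text{unlink})$ and the relative case in turn, and disposing of closed components of $\tau$ and of the purely vertical case of (4) by hand.

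First, in the absolute case $(Z,\tau)=(S^3,\text{unlink})$, conclusion (1) is precisely the reducible case of the machinery: if $S^3\setminus\tau$ is reducible then $\Sigma$ admits a reducing sphere by \cite[Theorem 1.4]{HS2}, and an innermost-disk argument against an essential sphere in the complement makes it unpunctured. For (2) and (3) with $\tau$ the unknot, Otal's theorem that the $b$-bridge sphere of the unknot is unique up to isotopy reduces us to the standard model, from which the required objects are read off directly: the two splitting disks of the standard $2$-bridge unknot are compressing disks whose boundaries meet $\Sigma$ in exactly two points (this is (2)), while at larger bridge number a sphere surrounding a single bridge arc is a twice-punctured reducing sphere, its boundary being essential on $\Sigma$ exactly because of the stated lower bound on $|\tau\cap\Sigma|$ (this is (3)). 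If the unlink is not a single unknot, then a split or individually knotted component makes the complement reducible and (1) (hence also (3)) applies, or one combines a split sphere with the analysis above.

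For a spanning relative trivial tangle $(Z,\tau)$ with bridge disk $\Sigma$, I would first dispose of (4) directly: there $\tau$ is a union of $n\ge 3$ vertical arcs with no bridge or closed components, so each side of $\Sigma$ is the standard tangle of $n$ unknotted vertical strands in $D^2\times I$. Taking a curve $s\subset\Sigma$ bounding a sub-disk $D'$ of $\Sigma$ that contains $n-1$ of the punctures (so $s$ is essential since $n-1\ge 2$ and the complementary region is once-punctured) and letting $P$ be the union of the frontiers, in the two sides, of the sub-cylinders over $D'$ carrying those $n-1$ strands, one gets a reducing annulus for $\Sigma$. For the remaining relative statements I would use a doubling trick (in the spirit of \cite{HS1}): double $(Z,\tau,\Sigma)$ along the vertical boundary $\boundary_v Z$ to obtain $S^2\times I$, in which the doubled disk becomes an equatorial sphere, and then cap the two spherical ends with balls carrying trivial arcs that join the doubled endpoints of the spanning arcs. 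Triviality of $(Z,\tau)$ forces the result to be an $(S^3,\text{unlink})$ pair in which the doubled disk is a bridge sphere of twice the bridge number, with closed components of $\tau$ becoming pairs of split unknots; the $S^3$-results above then apply, the numerical hypotheses in (2)--(3) being exactly what is needed for the doubled picture to land in the relevant range. The reducing spheres and compressing disks produced upstairs, being essentially unique, can be chosen invariant under the doubling involution, and they descend to $(Z,\tau)$ as a reducing sphere, a reducing annulus, or a pair of compressing disks meeting $\Sigma$ in exactly two points, according to how they meet the doubling locus; matching these outputs to (1)--(3) finishes the relative case.

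I expect the main obstacle to be the bookkeeping in the doubling trick: verifying that the doubled pair genuinely is an unlink bridged by the doubled $\Sigma$, that the thin-position output can be taken equivariant, and that each equivariant output descends to exactly the object --- reducing sphere, reducing annulus, or c-disk with the asserted intersection number --- demanded by the corresponding part (in particular that in the setting of (4), where the complement contains no reducing sphere at all, the descended object is forced to be an annulus). A secondary subtlety is establishing the \emph{exact} intersection number two in part (2), and the absence of a reducing sphere there, which rests on the uniqueness half of Otal's theorem rather than a soft parity count, and checking that the thresholds on $|\tau\cap\Sigma|$ are sharp enough to force the stronger of the possible conclusions in each case.
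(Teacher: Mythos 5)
Your plan for the absolute $(S^3,\text{unlink})$ case — defer (1) to \cite{HS2}, and for (2)–(3) reduce to the unknot, invoke Otal, and read off a perturbing pair / twice-punctured reducing sphere from a standard position — is the paper's route. Your construction for (4), an essential curve in $\Sigma$ bounding a subdisk containing a proper subset of the punctures and taking the vertical annulus over it, is also essentially what the paper does.

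Where you genuinely part ways is in how to reduce the relative case of (1)–(3) to something the existing thin-position results cover. The paper's trick is \emph{not} a double: it glues a single copy of $D^2\times I$ to $\boundary_v Z$, matching product structures, to get $\wihat{Z}\cong S^2\times I$, places one new vertical arc $\tau'$ in the added piece, and caps $\Sigma$ off to a sphere $\wihat\Sigma$. Because $\wihat\Sigma$ meets $\tau'$ exactly once, no perturbing pair can involve $\tau'$; one then applies \cite{HS1} (which handles such tangles in $S^2\times I$ directly) and the perturbing pair/reducing sphere it produces already lives in the original $Z$, with no descent step. You instead propose doubling along $\boundary_v Z$ and capping with balls to manufacture an honest $(S^3,\text{link})$ pair, and then pulling the thin-position output back under the involution. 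That is a real gap, in two places. First, thin-position theorems such as Otal's and Hayashi--Shimokawa's give existence (and at best uniqueness up to isotopy) of reducing spheres or perturbing pairs; they do not give an equivariant representative, and ``essentially unique, hence can be chosen invariant'' is not an argument — equivariant versions of these destabilization results would have to be proved separately, and it is precisely to sidestep this that the paper adds a single non-symmetric arc instead of a mirror copy. Second, the numerology does not line up cleanly after doubling: for example, your relative case of (2) (a single arc meeting the disk $\Sigma$ in $3$ points) doubles and caps to an unknot meeting a sphere in $6$ points, which is the threshold of the \emph{sphere} case of (3), not (2), so even an equivariant reducing sphere would have to be argued to descend to a pair of compressing disks meeting $\Sigma$ in exactly two points rather than to a reducing annulus or cut sphere. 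You flag both problems yourself, but they are not bookkeeping — the first in particular would require a different mechanism. Finally, you do not address the relative case of (1) (a reducible $Z\setminus\tau$ with $Z=D^2\times I$), which the paper covers by the same citation to \cite{HS2} applied after the single-arc filling.
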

\begin{proof}

We defer to \cite{HS2} for the case when $Z \setminus \tau$ is reducible. For (2) and (3), we will show that $\Sigma$ is ``perturbed'' and that this implies produces the desired conclusion. Assume, therefore, that $Z \setminus \tau$ is irreducible.

A \defn{bridge disk} for a bridge arc $\tau_0 \subset \tau\setminus \Sigma$ with endpoints on $\Sigma$ is an embedded disk with interior disjoint from $\Sigma \cup \tau$ and whose boundary is the union of $\tau_0$ with an arc on $\Sigma$. We say that $\Sigma$ is \defn{perturbed} if there exist bridge disks (called a \defn{perturbing pair}) on opposite sides of $\Sigma$ whose arcs on $\Sigma$ are disjoint except for sharing a single endpoint. Observe that the boundary of a regular neighborhood of the union $S$ of two such bridge disks is a twice-punctured sphere in $Z$ intersecting $\Sigma$ in a single simple closed curve. Thus, if $|\Sigma \cap \tau| + |\boundary\Sigma| \geq 5$ and $\Sigma$ is perturbed, then there is a twice-punctured reducing sphere for $\Sigma$. Similarly, if $|\Sigma \cap \tau| + |\boundary\Sigma| \geq 4$ and $\Sigma$ is perturbed, then there exist compressing disks for the tangles on either side of $\Sigma$ with boundaries intersecting on $\Sigma$ exactly twice. 

If $Z = S^3$, since $\tau$ is the unlink and $Z \setminus \tau$ is irreducible, $\tau$ is the unknot. By \cite{Otal}, it is perturbed and so the lemma holds. Henceforth, assume that $Z = D^2 \times I$. Since $Z \setminus \tau$ is irreducible, $\tau$ contains arc components. Attach a copy of $D^2 \times I$ to $(\boundary D^2) \times I \subset \boundary Z$ so that the product structures match. This converts $Z$ into $\wihat{Z} = S^2 \times I$. In the newly attached $D^2 \times I$, choose a vertical arc and call it $\tau'$. Let $\wihat{\tau} = \tau \cup \tau'$. Notice that we can recover $(Z,\tau)$ from $(\wihat{Z},\wihat{\tau})$ by drilling out $\tau'$. Attach a disk of the form $D^2 \times \{t_0\}$ to $\boundary \Sigma$ to form $\wihat{\Sigma}$. Since $\wihat{\Sigma}$ intersects $\tau'$ only once, there is no perturbing pair for $\tau'$.  Conclusions (2) and (3) now follow from \cite{HS1}.

Finally, suppose that $\tau$ is the union of three or more arcs, each intersecting $\Sigma$ in a single point. From the definition of spanning relative trivial tangle, we see that $\tau$ can be properly isotoped, by an isotopy preserving $\Sigma$ so that the arcs $\tau$ are vertical in the product structure on $Z$. We may then find an essential curve $s \subset \Sigma$ bounding a disk in $\Sigma$ containing an even number of punctures. Such a curve bounds even spanning annuli to both sides and their union is an even reducing annulus. Reversing the isotopy, we find the desired reducing annulus in $(Z,\tau)$.
\end{proof}

\section{Connected sums and distant sums of bridge trisections}\label{sec:sums}

Given two properly embedded smooth surfaces $S_1$ and $S_2$ such that one of them is in $S^4$ and the other is in either $B^4$ or $S^4$, we can form either their \defn{connected sum} $S_1 \# S_2$ or their \defn{distant sum} $S_1 \sqcup S_2$, which will be a surface in either $B^4$ or $S^4$. On the ambient 4-manifolds, both the connected sum and the distant sum function as connected sums; the difference is that for the connected sum the summing points are chosen to lie on $S_1$ and $S_2$ and for the distant sum the summing points are chosen to be disjoint from $S_1$ and $S_2$. Whether we perform a connected sum or distant sum, if $\mc{T}_1$ and $\mc{T}_2$ are bridge trisections for $S_1$ and $S_2$ with trisection surfaces $\Sigma_1$ and $\Sigma_2$ respectively, and if the summing points are chosen to lie on $\Sigma_1$ and $\Sigma_2$, then the connected sum $\Sigma= \Sigma_1 \# \Sigma_2$ is a trisection surface for a bridge trisection $\mc{T}$ of $S = S_1 \# S_2$ or $S = S_1 \sqcup S_2$. When we perform the connected sum or distant sum, we also say that the trisection $\mc{T}$ is a connected sum or distant sum, respectively. In either case, the trisection surface for $\mc{T}$ is the connected sum of the trisection surfaces for $\mc{T}_1$ and $\mc{T}_2$. See \cite[Section 2.2]{MZ1} for more details on connected sum. 

Observe that in a trisection surface $\Sigma$ for a trisection $\mc{T}$ that is a connected sum or distant sum, there is a simple closed curve $s \subset \Sigma$ such that the summing 3-sphere $P$ in the ambient 4-manifold intersects $\Sigma$ in $s$.  In the case of the connected sum, $P$ also intersects $S_1\# S_2$ in a single simple closed curve. This 3-sphere $P$ has the property that performing surgery along $P$ allows us to recover $S_1$ and $\mc{T}_1$, as well as $S_2$ and $\mc{T}_2$. In each trivial tangle in the spine of $\mc{T}$, the curve $s$ bounds either a disk disjoint from $S_1 \sqcup S_2$ (in the case of a distant sum) or a disk intersecting $S_1 \# S_2$ in a single point (in the case when the sum is a connected sum). If the sum is a distant sum, then the curve $s$ is  essential in $\Sigma$ as neither $S_1$ nor $S_2$ is empty. If the sum is a connected sum, then $s$ is essential in $\Sigma$ if and only if neither $\mc{T}_1$ nor $\mc{T}_2$ have bridge number 1/2 or 1. Moreover, we will show that if the loop $s$ is inessential, then $S_1$ or $S_2$ is an unknotted 2-sphere  in $S^4$ or $\boundary$-parallel disk in $B^4$. Thus, if $s \subset \Sigma$ is an essential simple closed curve, we say that the trisection $\mc{T}$ and trisection surface $\Sigma$ are a \defn{nontrivial connected sum} or \defn{distant sum}. Modelling our terminology on that of Heegaard splitting theory, we say that a bridge trisection $\mc{T}$ with trisection surface $\Sigma$ and spine $\mathcal{S}=(Z_{12},T_{12})\cup(Z_{23},T_{23})\cup(Z_{31},T_{31}) \cup (V, V \cap \boundary S)$ is \defn{reducible} if there exists an essential simple closed curve $s\subset \Sigma$ such that either for each choice of distinct $i,j \in \{1,2,3\}$ the curve $s$ bounds a compressing disk in $(Z_{ij}, T_{ij})$ or for each choice of distinct $i,j \in \{1,2,3\}$, the curve $s$ bounds a cut disk in $(Z_{ij}, T_{ij})$.

\begin{lemma}\label{redcriterion}
A bridge trisection $\mc{T}$ of a surface $S$ in either $B^4$ or $S^4$ is a nontrivial connected sum or distant sum if and only if it is reducible.
\end{lemma}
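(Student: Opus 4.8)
The plan is to treat the two implications separately. The forward implication is essentially recorded in the discussion preceding the statement, and the reverse implication is proved by cutting the spine open along the reducing disks.

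For ``nontrivial sum $\Rightarrow$ reducible'': if $\mc{T}$ is a nontrivial connected sum or distant sum with summing $3$-sphere $P$ and $s=P\cap\Sigma$ essential, then, as observed just before the lemma, for distinct $i,j$ the curve $s$ bounds a disk $D_{ij}$ in the spine tangle over $B_{ij}=Z_{ij}$ that is disjoint from $S$ when the sum is distant and meets $S$ in a single point when it is connected. Since $s$ is essential it avoids the punctures, so $\mathring D_{ij}$ meets $S$ only along $T_{ij}$; hence $D_{ij}$ is a compressing disk in the distant case and a cut disk in the connected case, and the family $\{D_{ij}\}$ witnesses that $\mc{T}$ is reducible.

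For ``reducible $\Rightarrow$ nontrivial sum'': start from an essential $s\subset\Sigma$ bounding c-disks $D_{12},D_{23},D_{31}$ in the three spine tangles, all compressing or all cut. Let $E$ and $E'$ be the closures of the components of $\Sigma\setminus s$ (when $\Sigma$ is a disk, take $E$ to be the disk and $E'$ the annulus cobounding with $\boundary\Sigma$). I would cut each $B_{ij}=Z_{ij}$ along $D_{ij}$ into balls $B^{E}_{ij},B^{E'}_{ij}$ meeting $\Sigma$ in $E$, $E'$, cap each copy of $D_{ij}$ with a disk, and in the cut case reconnect across the cap the two sub-arcs of the unique arc of $T_{ij}$ that met $D_{ij}$. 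Because no arc of $T_{ij}$ crosses $D_{ij}$ in the compressing case and exactly one does in the cut case, every spine arc issuing from a puncture of $E$ stays on the $E$-side of every $D_{ij}$; hence the capped, cut-open spine is the \emph{disjoint union} of two spines $\mc{S}_1$ (on the $E$-side) and $\mc{S}_2$ (on the $E'$-side), with trisection surfaces obtained from $\Sigma$ by surgery along $s$ --- a $2$-sphere, and a $2$-sphere or a disk, respectively. Lemma \ref{lemma:spine} then associates to $\mc{S}_1,\mc{S}_2$ surfaces $S_1\subset S^4$ and $S_2\subset S^4$ or $B^4$ with bridge trisections $\mc{T}_1,\mc{T}_2$, and reassembling should identify $\mc{T}$ with the distant sum $\mc{T}_1\sqcup\mc{T}_2$ in the compressing case (the disks, hence the summing $3$-sphere, miss $S$) and with the connected sum $\mc{T}_1\#\mc{T}_2$ in the cut case, the summing $3$-sphere being the union of the caps with the $3$-ball regions of the $X_i$ they span. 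Since $s$ is essential, the sum is nontrivial.

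The hard part will be verifying that $\mc{S}_1$ and $\mc{S}_2$ really are spines of bridge trisections: that cutting along an arbitrary c-disk $D_{ij}$ and capping yields (strictly) trivial tangles; that the links $S\cap\boundary W_i$ remain unlinks and the $4$-dimensional pieces remain trivial disk systems; and, in the relative case, that the braid $\boundary S$ and the solid torus $V$ split compatibly and that the ``core or disjoint from $\boundary_-Z_{ij}$'' condition of Definition \ref{relative bridge} survives. For this I would apply the surgery analysis of Section \ref{sec:surgery} together with the Consistent Bounding Corollary \ref{consistent bounding cor} to the reducing sphere (or reducing annulus, in the relative case) that $D_{ij}$ and $E$ cut out of each unlink / trivial spanning tangle $\boundary X_j\cap S$, using essentiality of $s$ to guarantee the summands are strictly smaller; a subtlety in the cut case is to ensure that, after severing and reconnecting, no tangle component acquires both endpoints on a $\boundary_-Z$.
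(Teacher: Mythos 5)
Your proof follows the same route as the paper's: the forward direction reads off the discussion immediately preceding the lemma, and the reverse direction cuts the spine open along the c-disks $D_{ij}$, reassembles the pieces into two spines, and invokes Lemma \ref{lemma:spine} to identify $\mc{T}$ with the distant or connected sum. The one piece of overreach is your worry about ``the $4$-dimensional pieces remain trivial disk systems'' --- that is automatic once the $3$-dimensional spine conditions (trivial tangles, unlinks/spanning trivial tangles) are verified, which is exactly why the paper checks only those before applying Lemma \ref{lemma:spine}, and why the Consistent Bounding Corollary is not actually needed in this proof (the reducibility hypothesis already supplies a consistent type of c-disk across all three tangles).
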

\begin{proof}
The ``only if'' direction was addressed in the previous paragraph. To prove the ``if'' direction, suppose that $\mc{T}$ is reducible. Let $s \subset \Sigma$ be an essential curve in the trisection surface for $\mc{T}$ such that $s$ bounds either a compressing disk in each tangle $(Z_{ij}, T_{ij})$ forming the spine for $\mc{T}$ or $s$ bounds a cut disk in each tangle $(Z_{ij}, T_{ij})$ forming the spine for $\mc{T}$. In either case, let $D_{ij} \subset Z_{ij}$ be the c-disk.  In both the punctured and unpunctured cases, cutting the trivial tangle $(Z_{ij},T_{ij})$ along  $D_{ij}$ produces two trivial tangles, and surgering along the union $D_{ij} \cup D_{jk}$ in the 3-sphere or 3-ball $Z_{ij}\cup Z_{jk}$ decomposes the unlink or relative spanning trivial tangle $T_{j} = T_{ij} \cup T_{jk}$ into the disjoint union of links or tangles. Since $T_j$ was the unlink or a relative spanning trivial tangle, these links or tangles are unlinks or relative spanning trivial tangles. Thus, we have spines for bridge trisections $\mc{T}_1$ and $\mc{T}_2$ of surfaces $S_1$ and $S_2$, with one surface being in $S^4$ and the other being in $B^4$ or $S^4$.  Performing the distant sum or connected sum of these trisections produces a trisection $\mc{T}'$ with the same spine as that of $\mc{T}$. By Lemma \ref{lemma:spine}, this means that the original trisection is the distant or connected sum of these two other trisections.
\end{proof}

In a similar spirit, we have the following characterization of trisections of surfaces in $B^4$ that are either closed or  whose boundary is disjoint from or a 1-braid with respect to the boundary of the trisection surface.

\begin{lemma}\label{one arc}
Suppose that $\mc{T}$ is a relative trisection of $S \subset B^4$ with spine $\mathcal{S}=(Z_{12},T_{12})\cup(Z_{23},T_{23})\cup(Z_{31},T_{31}) \cup (V, V \cap \boundary S)$. If $\boundary S$ is a 0-braid or 1-braid in $V$ (equivalently, if each $T_{ij}$ contains at most one vertical arc), then $\mc{T}$ is the distant sum or connected sum of a trisection for a surface $S' \subset S^4$ of bridge number $|\Sigma \cap S|/2$ with either the relative bridge trisection of the empty surface in $B^4$ or a bridge number 1/2 relative bridge trisection of a $\boundary$-parallel disc in $B^4$.
\end{lemma}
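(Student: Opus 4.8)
The plan is to exhibit $\mc{T}$ as the asserted sum by producing a single explicit summing $3$-sphere and then arguing exactly as in the proof of Lemma \ref{redcriterion}. In both cases I would take $P$ to be a copy of $\partial B^4$ pushed slightly into the interior of $B^4$. Then $P$ is a $3$-sphere which meets each $Z_{ij}$ in a disk $D_{ij}$ parallel to $\partial_- Z_{ij}\cup\partial_v Z_{ij}$ and pushed off $\Sigma$, with $\partial D_{ij}=s$ a curve on $\Sigma$ parallel to $\partial\Sigma$ and disjoint from the punctures (which lie in the interior of $B^4$), and $P$ meets each $X_i$ in a $3$-ball. Thus $s$ is inessential in $\Sigma$, the disks $D_{ij}$ are c-disks for the $(Z_{ij},T_{ij})$ of the same type (since $s$ cuts off the same subsurface of $\Sigma$ in each), and $P$ is built from the $D_{ij}$ precisely as the summing sphere constructed in the proof of Lemma \ref{redcriterion}. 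One then surgers $B^4$, $S$, and $\Sigma$ simultaneously along $P$ as in Section \ref{sec:surgery} and appeals to Lemma \ref{lemma:spine}.

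\emph{Case $n=0$.} Here $\partial S=\nil$, so $S$ lies in the interior of $B^4$ and is disjoint from $P$; since no $T_{ij}$ has a vertical arc, $\partial_- Z_{ij}$ is disjoint from $S$, and the bridge arcs of $T_{ij}$ (which lie near $\Sigma$) may be taken disjoint from $D_{ij}$, so each $D_{ij}$ is a compressing disk. Thus $P$ is an unpunctured summing sphere, and surgering along it yields $S^4$ containing $S$ --- now a closed surface $S'\subset S^4$ carrying a genus-$0$ bridge trisection whose trisection sphere has exactly the punctures of $\Sigma$, hence bridge number $|\Sigma\cap S|/2$ --- together with $B^4$ containing the empty surface with its trivial relative trisection. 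By Lemma \ref{lemma:spine}, $\mc{T}$ is the distant sum of these.

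\emph{Case $n=1$.} Here $\partial S$ is a single $1$-braid and each $T_{ij}$ has exactly one vertical arc $\tau_{ij}$. Consequently $P\cap S$ is a single curve $\gamma$ parallel in $S$ to $\partial S$, meeting each $D_{ij}$ in one point lying on $\tau_{ij}$, so each $D_{ij}$ is a cut disk and $P$ is a twice-punctured summing sphere. Surgering along $P$ caps off $\partial S$, giving $S^4$ with a closed surface $S'$ carrying a bridge trisection, and $B^4$ with a surface whose induced trisection has each tangle consisting of a single vertical arc; by Lemma \ref{only arcs}, this surface is a $\partial$-parallel disk of bridge number $1/2$. The bridge number of the trisection of $S'$ is one half the number of punctures of $\Sigma$ surviving the surgery, computed using the puncture bookkeeping of Section \ref{sec:surgery}. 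By Lemma \ref{lemma:spine}, $\mc{T}$ is the connected sum of the trisection of $S'$ with this bridge-number-$1/2$ relative trisection; if instead $S$ is disconnected with a bridge-number-$1/2$ $\partial$-parallel disk as a component, a separating $3$-sphere disjoint from $S$ exhibits $\mc{T}$ as the corresponding distant sum.

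The step I expect to be most delicate is checking that the pushed-in copy $P$ of $\partial B^4$ meets the spine in the way needed for the surgery of Section \ref{sec:surgery} to return honest bridge trisections on both sides: that each $D_{ij}$ is a genuine c-disk for $(Z_{ij},T_{ij})$ with $\partial D_{ij}$ cutting off a once-punctured (Case $n=1$) or unpunctured (Case $n=0$) disk of $\Sigma$, and that the two resulting spines really are spines of relative bridge trisections. Keeping exact track, in Case $n=1$, of which punctures of $\Sigma$ survive is what pins down the bridge number of the trisection of $S'$.
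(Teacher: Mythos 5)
Your proof is correct and follows essentially the same route as the paper's: the paper begins with a $\boundary$-parallel curve $s\subset\Sigma$ and observes that it bounds a $0$- or $1$-punctured disk in each $(Z_{ij},T_{ij})$, then surgers and applies Lemma \ref{only arcs} to identify the $B^4$-side surface; your pushed-in copy of $\partial B^4$ produces exactly these disks, with $\partial D_{ij}=s$ your $\boundary$-parallel curve. The case split $n=0,1$ and the appeal to Lemmas \ref{only arcs} and \ref{lemma:spine} match the paper's argument, just stated more explicitly.
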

\begin{proof}
Let $s \subset \Sigma$ be a $\boundary$-parallel simple closed curve. Since each $T_{ij}$ contains at most one  vertical arc, the curve $s$ bounds a zero-punctured or once-punctured disk in each $(Z_{ij}, T_{ij})$. (It must be the same type of disk in each.) As in the proof of Lemma \ref{redcriterion}, we can surger the spine for $\mc{T}$ along those zero-punctured or once-punctured disks and this extends to a surgery on $S$ and $\Sigma$ in $B^4$. The result is a trisection $\mc{T}_1$ of a surface $S_1 \subset B^4$ and a trisection $\mc{T}_2$ of a surface $S_2 \subset S^4$. The trisection $\mc{T}_1$ has bridge number 0 or 1/2, as $s$ was $\boundary$-parallel in $\Sigma$. The trisection $\mc{T}_2$ has bridge number equal to that of $\mc{T}$. By Lemma \ref{only arcs}, the surface $S_1$ is either empty or a $\boundary$-parallel disk.
\end{proof}


  \section{The pants complex and efficient defining pairs}
  \label{pants}
   
Suppose that $\Sigma$ is a compact surface with punctures. A \emph{pants decomposition} of $\Sigma$ is a collection of pairwise disjoint essential curves (up to isotopy) cutting $\Sigma$ into pairs of pants. The cases that are of most interest to us are when $\Sigma$ is an admissible punctured sphere or disk. If $\Sigma$ is a sphere with $2b \geq 4$ punctures, then each pants decomposition of $\Sigma$ has $2b - 3$ curves. If $\Sigma$ is a disk with $2b \geq 3$ punctures, then each pants decomposition of $\Sigma$ has $2b - 2$ curves. Define $\mc{P}(\Sigma)$, the \emph{pants complex} of $\Sigma$, as follows. Each pants decomposition of $\Sigma$ is a vertex of $\mc{P}(\Sigma)$. Two vertices are connected by an edge if the two corresponding pants decompositions have all but one (isotopy class of) curve in common and the two curves where they differ (have representatives that) intersect minimally in exactly two points.  The distance $d(x,y)$ between two collections of  vertices $x$ and $y$ in $\mc{P}(\Sigma)$ is the minimum number of edges in a path in $\mc{P}(\Sigma)$ between a vertex of $x$ and a vertex of $y$. The pants complex for admissible surfaces is connected and has infinite diameter, using the metric $d$ \cite{HatcherThurston}. We say that a curve $s \subset \Sigma$ is a \defn{common curve} for a given path in $\mc{P}(\Sigma)$ if, for each pants decomposition on the path, $s$ is isotopic to one of the curves in the pants decomposition.

We will be using curves in pants decompositions to find reducing spheres and annuli. The combinatorics in Lemma \ref{DisjointSpheres} are the reason for insisting that our reducing annuli be even.

If $\Sigma'$ is a disk with punctures (not necessarily admissible), a collection of pairwise disjoint essential simple closed curves is a \defn{weak pants decomposition} of $\Sigma'$ if it is either empty and $\Sigma'$ has two or fewer punctures, or if it cuts $\Sigma'$ into pairs of pants and annuli, such that at most one of the pairs of pants is a once-punctured annulus and that annulus, if it exists, has $\boundary \Sigma'$ as one of its boundary components. For both trivial tangles and strictly trivial tangles $(B, \kappa)$ with $\Sigma = \boundary_+ B$, we define a certain subset of the vertices of $\mc{P}(\Sigma)$ to be the \defn{disk set} $\mc{D}_\kappa$. The case when $(B, \kappa)$ is a trivial tangle is the simplest.

\begin{definition}
 Suppose that $(B, \kappa)$ is a trivial tangle with $\Sigma = \boundary B$ admissible.  A vertex $x \in \mc{P}(\Sigma)$ lies in $\mc{D}_\kappa$ if and only if there is a collection of c-disks $D \subset B$ such that $x = \boundary D$. Now suppose that $(B,\kappa)$ is a strictly trivial relative tangle with $\Sigma = \boundary_+ B$ admissible. A vertex $x \in \mc{P}(\Sigma)$ lies in $\mc{D}_\kappa$ if and only if there is a collection of properly embedded disks and annuli $D \subset B$, transverse to $\kappa$ such that:
 \begin{enumerate}
 \item Each disk component of $D$ is a c-disk for $(B, \kappa)$
 \item Each annulus component of $D$ is an even spanning annulus for $(B,\kappa)$.
 \item $\boundary D \cap \Sigma = x$
 \item $\boundary D \cap \boundary_- B$ is a weak pants decomposition of $\Sigma$.
\end{enumerate}
 \end{definition}

It is a well-known fact (and easy to prove) that if $\Sigma = \boundary B$ is an admissible sphere for the trivial tangle $(B,\kappa)$, then $\mc{D}_{\kappa}$ is nonempty. We need the corresponding result for bridge disks. See Figure \ref{fig:pants} for an example.

\begin{figure}[ht!]
 \labellist
\small\hair 2pt
 \pinlabel {$\boundary_+ Z$} [br] at 35 404
\pinlabel {$\gamma'$} [l] at 361 432
\pinlabel {$\gamma_-$} [l] at 119 118
\pinlabel {$\gamma''$} [b] at 176 34
\endlabellist
 \centering
 \includegraphics[scale=0.3]{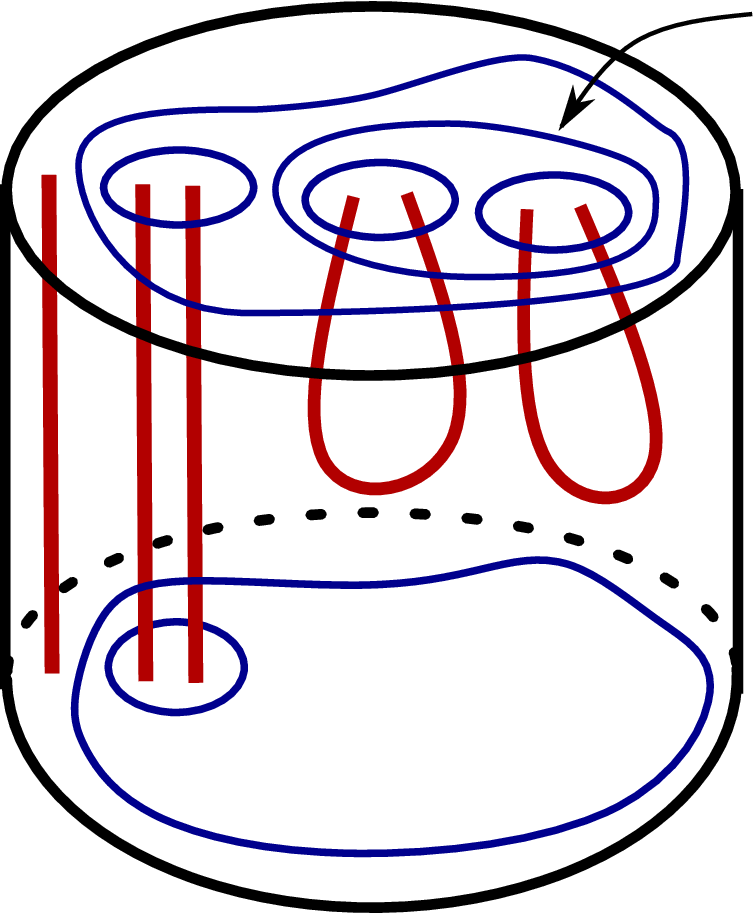}
 \caption{The thin blue curves at the top of $Z = D^2 \times I$ form a pants decomposition in the disk set of the strictly trivial relative tangle constructed according to our recipe. We have labelled the curves $\gamma_-$ and $\gamma''$ used in the construction in the proof of Lemma \ref{lem:nonemptydisksets}.}
 \label{fig:pants}
 \end{figure}

\begin{lemma}\label{lem:nonemptydisksets}
Suppose that $(B,\kappa)$ is a strictly trivial relative tangle such that $|\boundary_+ B \cap \kappa| \geq 3$. Then $\mc{D}_\kappa \neq \nil$.
\end{lemma}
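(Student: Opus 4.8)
The plan is to exhibit, by a hands-on construction from a normal form for $\kappa$, a collection $D$ of disks and annuli in $B$ satisfying conditions (1)--(4) of the definition of $\mc{D}_\kappa$; then $x := \boundary D \cap \Sigma$ is the required vertex of $\mc{P}(\Sigma)$, where $\Sigma = \boundary_+ B$. First I would put $\kappa$ in standard position. Since $(B,\kappa)$ is strictly trivial, $\kappa$ is a disjoint union of $b'$ bridge arcs and $v$ vertical arcs, with $n := |\Sigma \cap \kappa| = 2b'+v \geq 3$, and by the definition of triviality $\kappa$ may be isotoped, rel $\boundary\kappa$, into $\alpha \times I$ for a trace arc $\alpha \subset \Sigma$. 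Viewing $\alpha \times I$ as a square with top edge on $\Sigma$ and bottom edge on $\boundary_- B$, each vertical arc is a properly embedded strand from the top edge to the bottom edge, hence separates the square, so no bridge arc can have its two endpoints on opposite sides of a vertical arc. After a further isotopy inside $\alpha \times I$ we may thus assume the $v$ strands cut the square into regions $R_0,\dots,R_v$, with $R_i$ carrying $m_i$ disjoint (possibly mutually nested) bridge caps, $\sum_i m_i = b'$; along $\Sigma$ the punctures then occur in the order $R_0$-endpoints, $q_1'$, $R_1$-endpoints, $q_2'$, $\dots$, $q_v'$, $R_v$-endpoints, where $q_1',\dots,q_v'$ are the vertical tops, and the bottoms $q_1,\dots,q_v$ occur in this same order on $\boundary_- B$.

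Then I would assemble $D$ from four elementary kinds of pieces. (a) For each bridge arc, a compressing disk, namely the frontier of a neighbourhood of the arc together with a bridge disk obtained by pushing the arc slightly off the square $\alpha \times I$; its boundary is a ``dumbbell'' curve on $\Sigma$ enclosing exactly that arc's two punctures. (b) When $v \geq 1$, for a vertical top $q_j'$, a cut disk, obtained by tubing a compressing disk of a suitably chosen bridge arc to a meridian disk of the vertical arc through $q_j'$; its boundary is a curve on $\Sigma$ enclosing that bridge arc's two endpoints together with $q_j'$, and it meets $\kappa$ exactly once. (c) When $v \geq 3$, a nested family of spanning annuli, each the frontier of a neighbourhood of a ``comb'' built from a consecutive block of vertical strands, the bridge arcs in the regions between them (with bridge disks), and connecting bands on $\Sigma$ and on $\boundary_- B$; such an annulus meets $\Sigma$ in a curve enclosing that block of punctures and meets $\boundary_- B$ in a curve enclosing the corresponding block of $q_j$'s, and it is disjoint from $\kappa$ precisely because no bridge arc straddles a vertical strand, so each comb absorbs exactly the bridge arcs it spans. (d) Nested compressing disks around unions of bridge arcs, to fill out the decomposition. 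Using the normal form one chooses these pieces so that the curves $\boundary D \cap \Sigma$ are pairwise disjoint and essential (here $n \geq 3$ suffices in general, $n \geq 4$ for the cut disks), so that together they cut $\Sigma$ into pairs of pants --- giving conditions (1)--(3) --- and so that the curves the annuli induce on $\boundary_- B$ form a nested ``caterpillar'' in the $v$-punctured disk $\boundary_- B$, in particular a weak pants decomposition (condition (4); this is the empty collection when $v \leq 2$, since $\boundary_- B$ then has at most two punctures, and correspondingly no annuli of type (c) are used).

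It is cleanest to organise this as an induction on $n$. When $v \leq 2$ one builds $D$ directly from pieces of types (a), (b), (d), with no annuli, checking the small base cases ($n = 3,4$) by hand. When $v \geq 3$ one peels off a single spanning annulus $A$ of type (c) enclosing a proper, nonempty consecutive block of vertical strands together with the bridges between them; this cuts $B$ into a ball on one side and a solid torus on the other, and --- following the device in the proof of Lemma \ref{reduction lem} --- one glues a copy of $D^2 \times I$ containing one new vertical arc $\tau'$ onto the solid-torus side, producing two strictly trivial relative tangles with strictly fewer punctures on their positive boundaries, to which the inductive hypothesis applies; $D$ is then the union of $A$, the disks and annuli obtained from the two smaller tangles, and nothing else. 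The fresh arc $\tau'$ is exactly what makes the count work: replacing an annulus boundary curve by a once-punctured disk leaves the number of curves in a pants decomposition unchanged, whereas capping it by an \emph{un}punctured disk would lose one.

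The only genuine work is the bookkeeping underlying the last part of the second paragraph (equivalently, the inductive step): one must make the combined choice of pieces so that all the curves on $\Sigma$ are simultaneously disjoint, essential, and cut $\Sigma$ into honest pairs of pants, while the curves they induce on $\boundary_- B$ form a weak --- not full --- pants decomposition with at most one once-punctured-annulus piece, and that piece meeting $\boundary(\boundary_- B)$. The normal form (no bridge arc straddling a vertical strand, vertical strands linearly ordered, bridge caps within each region forming a nested forest) is precisely what makes these requirements compatible, and it reduces the disjoint-from-$\kappa$ verification for the combs to the observation that a comb can absorb exactly the bridge arcs lying in the regions it spans. I expect verifying this compatibility, together with the handful of degenerate small cases, to be the entirety of the difficulty; the rest is the standard disk-set argument for trivial tangles.
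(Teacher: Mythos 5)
Your overall strategy -- build an element of $\mc{D}_\kappa$ by putting $\kappa$ in a normal form and assembling explicit c-disks and spanning annuli -- is the same as the paper's, but the execution diverges in a way worth flagging. The paper chooses a sharper normal form: the trace arc is chosen so that \emph{all} vertical-arc endpoints precede \emph{all} bridge-arc endpoints along $\alpha$, and the bridge arcs are outermost in $\alpha\times I$. With that choice there is no interleaving of bridges between verticals, so there is no need for your regions $R_0,\dots,R_v$, no need for ``combs'' that absorb bridge arcs, and -- crucially -- no need for cut disks at all. The paper produces only compressing disks (bridge disks, the big separating disk $E$ bounded by $\gamma'$, and fillers inside $E$) and spanning annuli (from ``parallelisms'' between adjacent pairs of vertical arcs and from vertical extensions of the auxiliary curves $\gamma''$ on $\boundary_- B$), and the resulting construction is \emph{direct}, not inductive. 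Your type-(b) cut disks are therefore superfluous to the lemma, and their presence introduces parity and availability issues (you need a bridge arc to tube to) that the paper simply does not face.

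The inductive organisation in your third paragraph needs more care than you give it. After cutting along the spanning annulus $A$ and capping the outer piece with a $D^2\times I$ containing the auxiliary vertical arc $\tau'$, you invoke the lemma on the two smaller tangles and propose to take $D$ to be $A$ together with the two families so produced. But the lemma as stated gives you \emph{some} element of each smaller disk set with no control over it. Two things must then be checked, and neither is automatic. First, the curves of $x_2$ on $\boundary_+ B_2'$ must be isotoped off the (inessential) curve $A\cap\Sigma$ and the once-punctured disk around $\tau'$ must, after replacement by the inner disk $\boundary_+B_1$ carrying $x_1\cup\{A\cap\Sigma\}$, still yield genuine pairs of pants; this part does in fact work out, as you can verify by cases on the type of pants piece containing the $\tau'$ puncture. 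Second, and this is the real gap, the induced curves on $\boundary_- B$ must form a \emph{weak} pants decomposition, i.e. have at most one once-punctured-annulus piece and that piece adjacent to $\boundary(\boundary_- B)$. The weak pants decomposition you get on $\boundary_- B_1$ is allowed to have a once-punctured annulus meeting $A\cap\boundary_- B$, but after gluing, $A\cap\boundary_- B$ is an \emph{interior} curve of $\boundary_- B$, so that piece would violate the definition; and you could likewise end up with a second once-punctured annulus inherited from $\boundary_- B_2'$. You would need to strengthen the inductive hypothesis (or make a delicate choice in the smaller tangles) to rule this out. The paper sidesteps all of this: with the ``verticals first, bridges outermost'' normal form, the only possible once-punctured annulus comes from a single leftover vertical arc when $v$ is odd, and the paper explicitly arranges for that leftover to be the first one met along $\alpha$, placing it adjacent to $\boundary\Sigma$ and $\boundary(\boundary_- B)$ as required. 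I would recommend replacing the induction with the paper's one-shot construction; the only genuine content is then choosing $\alpha$ well, which trivialises the bookkeeping you identify as the hard part.
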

\begin{proof}
Let $\alpha \subset \boundary_+ B =\Sigma$ be the trace arc. By choosing $\alpha$ carefully we can guarantee that as we traverse $\alpha$ (in some direction)  we encounter the endpoints of all vertical arcs prior to encountering the endpoints of all bridge arcs and also that there is no nesting of the bridge arcs of $\kappa$ in the trace disk $\alpha \times I$. The bridge arcs cut off bridge disks from $\alpha \times I$. And we can pair up the adjacent vertical arcs so that, in pairs, they form two edges of the boundary of a rectangle $\alpha' \times I$ where $\alpha'$ is a subarc of $\alpha$ and $int(\alpha' \times I)\cap \kappa=\nil$. We call such a rectangle a \defn{parallellism}. If the number of vertical arcs is odd, there will be one vertical arc left over and we arrange for it to be the vertical arc whose endpoint is the first we encounter as we traverse $\alpha$. 

Let $\delta$ be the union of the bridge disks and parallelisms.  Let $\alpha' \subset \alpha$ be the subarc containing exactly those punctures of $\kappa \cap \boundary_+ B$ belonging to bridge arcs. Let $\alpha_+  = \delta \cap \boundary_+ B$. The boundary of a regular neighborhood of $\alpha_+$ is a collection of simple closed curves $\gamma_+$ in $\boundary_+ B$, each bounding a twice-punctured disc in $\boundary_+ B$ and each bounding an unpunctured disc in $(B, \kappa)$. 

If there are no vertical arcs, set $\gamma' = \boundary (\boundary_+ B)$ and $E = \boundary_+ B$. Otherwise, let $\gamma'$ be the boundary of a regular neighborhood of $\alpha'$ and isotope it as necessary so that it bounds a disc $E$ in $\boundary_+ B$ containing $\gamma_+$ in its interior. Observe that $\gamma'$ bounds an unpunctured disc $D$ in $(B, \kappa)$.

Suppose, for the moment, that there are at least three vertical arcs. After boundary-reducing $(B, \kappa)$ along $D$, it becomes $(B', \kappa') = (D^2 \times I, \text{ points } \times I)$. Using that product structure, the remnant of the disc $D$ in $\boundary_+ B'$ projects vertically to an unpunctured disc $D' \subset \boundary_- B' = \boundary_- B$. The parallelisms in $\alpha \times I$ survive to parallelisms between the arcs of $\kappa'$. Let $\delta_-$ be the arcs that are the intersections between these parallelisms and $\boundary_- B'$. Let $\gamma_-$ be the boundary of a regular neighborhood of $\delta_-$. Then each curve of $\gamma_- $ bounds a twice-punctured disc in $\boundary_- B'$ and additional curves can be added to $\gamma_-$ so that it is a pants decomposition of $\boundary_- B'$ and is disjoint from $D'$. Extend $\gamma_-$ vertically through the product structure on $(B', \kappa')$ to arrive at curves $\wihat{\gamma}_-$ in $\boundary_+ B$ disjoint from $E$. The curves $\wihat{\gamma}_-$ bound even spanning annuli. Additional curves can then be added to $\wihat{\gamma}_- \cup \gamma' \cup \gamma_+$ to turn it into an element of $\mc{D}_\kappa$. The curves that are added either lie in $E$ and bound unpunctured discs in $(B, \kappa)$ or are external to $E$ and bound even spanning annuli.  If, on the other hand, there are two or fewer vertical arcs, the construction of an element in $\mc{D}_\kappa$ is easier as the empty set of curves is a weak pants decomposition of $\boundary_- B$. 
\end{proof}

\begin{definition}
Suppose that $(Z,\tau)$ is either an unlink or a relative spanning trivial tangle with bridge surface $\Sigma$ dividing $(Z,\tau)$ into trivial tangles or strictly trivial relative tangles $(B,\kappa)$ and $(B', \lambda)$. A pair of pants decompositions $x \in \mc{D}_\kappa$ and $y \in \mc{D}_\lambda$ is said to be an \defn{efficient defining pair} if $d(x,y) = d(\mc{D}_\kappa, \mc{D}_\lambda)$. (That is, if $x$ and $y$ are vertices in $\mc{D}_\kappa$ and $\mc{D}_\lambda$ whose distance in the pants complex of $\Sigma$ is minimal.)
\end{definition} 
 
We can now define the Kirby-Thompson invariant of a bridge trisection. See Figure \ref{ldefn2} for a schematic representation of the efficient defining pairs for a trisection.

\begin{definition}
Suppose that $S \subset B^4$ or $S \subset S^4$ is a properly embedded surface with bridge trisection $\mc{T}$ having trisection surface $\Sigma$ and spine $(Z_{12}, T_{12}) \cup_\Sigma (Z_{23}, T_{23}) \cup_\Sigma (Z_{13}, T_{13})\cup (V,\partial S \cap V)$. For $\{i,j,k\} = \{1,2,3\}$, let $(p^j_{ij}, p^j_{jk})$ be an efficient defining pair for $(Z_{ij}, T_{ij}) \cup_\Sigma (Z_{jk}, T_{jk})$. If $\Sigma$ is not admissible, define $\L(\mc{T}) = 0$. Otherwise, define $\L(\mc{T})$ to be the minimum of
\[
d(p^1_{12}, p^2_{12}) + d(p^1_{31}, p^3_{31}) + d(p^2_{23}, p^3_{23})
\]
over all such choices of efficient defining pairs. Define $\L(S)$ to be the minimum of $\L(\mc{T})$ over all trisections $\mc{T}$ of $S$ with $b(\mc{T}) = b(S)$.
\end{definition}
     
\begin{figure}[ht!]
\labellist
\small\hair 2pt
\pinlabel $p_{12}^1$ [br] at 94 190
\pinlabel $p_{12}^2$ [br] at 160 57
\pinlabel $p_{23}^2$ [br] at 360 88
\pinlabel $p_{23}^3$ [br] at 415 220
\pinlabel $p_{31}^3$ [br] at 329 310
\pinlabel $p_{31}^1$ [br] at 170 283
\endlabellist
\centering
\includegraphics[width=4in]{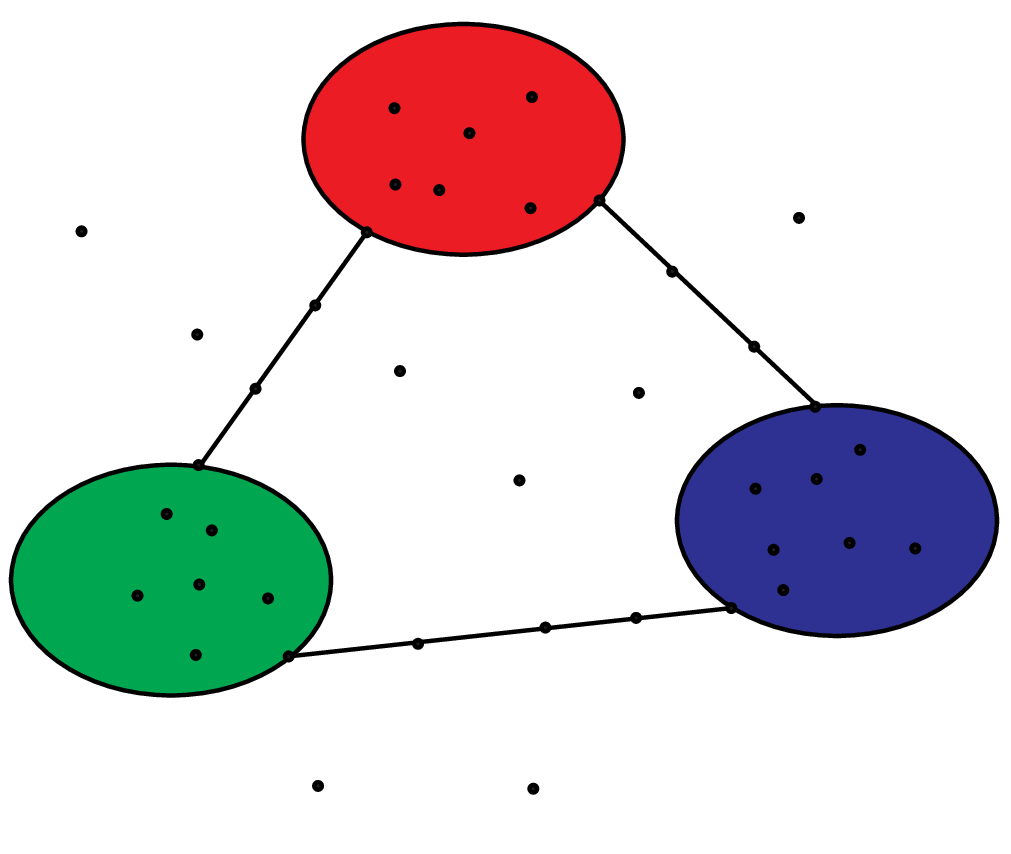}
\caption{Defining $\L(\mc{T})$ via efficient defining pairs. The dots represent pants decompositions of $\Sigma$ and the dark ellipses represent the disk sets. The line joining $p^i_{jk}$ to $p^i_{ik}$ represents a geodesic path in the pants complex.}
\label{ldefn2}
\end{figure}

\begin{remark}
Our definition of $\L$ is not an exact parallel of that of Kirby and Thompson \cite{KirbyThompson} in three main regards. The first is that we use the pants complex, rather than the cut complex, which is empty for punctured spheres and disks. Secondly, we calculate the distance between $p^i_{ij}$ and $p^j_{ij}$ in the whole pants complex, rather than in the disk set. This is not a serious point, but it allows us to avoid discussing the geometry of the disk set. Finally, in defining $\L(S)$ we minimize $\mc{L}(\mc{T})$ only over those trisections achieving the bridge number of $S$, rather than over all trisections of $S$. In the context of bridge trisections, this may be the more sensible definition for the following reason. Gay and Kirby show that any two trisections of a 4-manifold become equivalent after stabilizing each some number of times. Kirby and Thompson's invariant does not increase under stabilization; thus, Kirby and Thompson's definition of their invariant as a minimum, is equivalent to taking its limit under trisection stabilization. Bridge trisections, on the other hand, behave differently. Meier and Zupan \cite{MZ1} show that any two trisections of a surface in $S^4$ become equivalent after some number of perturbations and unperturbations, applied to each. (This was extended to higher genus bridge trisections in \cite{HKM}.) Consequently, minimizing $\mc{L}$ over all bridge trisections may no longer be equivalent to taking a limit.
\end{remark}

The remainder of this section is taken up with developing the properties of efficient defining pairs and the geodesics in $\mc{P}(\Sigma)$ between them.  Our analysis draws heavily from that of Zupan \cite[Lemmas 4.1 and 4.2]{Zupan}. We give the complete proofs since our setting is somewhat different and since Zupan does not address the relative case.

\subsection{The distance between an efficient defining pair}
In this subsection, we establish the distance in the pants complex between $x$ and $y$ forming an efficient defining pair for some unlink or spanning relative trivial tangle.

\begin{lemma}  \label{lemma:mindistance}
Let $(S^3, L)$ be a link with bridge sphere $\Sigma$ such that $b = |\Sigma \cap L|/2 \geq 2$ and having $c \geq 1$ components. Let  $(B,\kappa)$ and $(B',\lambda)$ be the tangles on either side of $\Sigma$.   For all $x \in \mc{D}_\kappa$ and $y \in \mc{D}_\lambda$, we have $d(x,y) \geq b - c$. Furthermore, if $L$ is an unlink, then for any efficient defining pair $x \in \mc{D}_\kappa$ and $y \in \mc{D}_\lambda$, equality holds. Additionally, if $b\geq 3$ and $d(x,y) = b - c$, then for any geodesic in $\mc{P}(\Sigma)$ between the efficient pair there is a common curve.
\end{lemma}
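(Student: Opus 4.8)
The plan is to prove the three assertions of Lemma~\ref{lemma:mindistance} in sequence, using the combinatorial structure of the pants complex together with the fact (Lemma~\ref{lem:nonemptydisksets}) that disk sets are nonempty.

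\textbf{Lower bound.} First I would establish the inequality $d(x,y) \geq b - c$ for arbitrary $x \in \mc{D}_\kappa$ and $y \in \mc{D}_\lambda$. The idea is to associate to each pants decomposition $z$ along a geodesic from $x$ to $y$ a link $L_z$ obtained by an ``interpolation'' or ``tubing'' construction: starting from $L$, the curves of $x$ bound compressing/cut disks in $B$, and as one moves along the geodesic one surgers the link in a controlled way, so that the number of components can change by at most one across each edge of $\mc{P}(\Sigma)$. Since $x$ bounds a disk set in $B$, performing the compressions on that side exhibits $L$ (when surgered against the disks coming from $\Sigma$) with at least $b - c$ ``excess'' complexity; more concretely, the curves of $x$ cut $\Sigma$ into $2b-3$ curves (sphere case), and those realized by compressing disks on the $B$-side produce at most $c$ components after compression, forcing at least $b-c$ curves that must be ``undone'' to reach $y$. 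I would make this precise by a component-counting argument: define $n(z)$ to be the number of components of the link obtained by compressing along the disks certifying $z \in \mc{D}_\kappa$ on one fixed side, show $|n(z) - n(z')| \leq 1$ along an edge, show $n(x) \geq b$ (since $x$ itself, as a full pants decomposition realized by disks, maximally compresses), and $n(y) \leq c$ after the other disk set is applied — or rather run the argument symmetrically so that a path from $x$ to $y$ must cross $b-c$ units of this invariant. This mirrors Zupan's Lemma~4.1 and the analogous Heegaard-genus/bridge-distance arguments; the relative case is handled by also allowing spanning annuli, which contribute to the arc-count rather than the closed-component count, so the bound becomes $d(x,y) \geq b - c$ with $c$ interpreted via the formula $(c + v/2)$ from Section~\ref{sec:surgery}.

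\textbf{Sharpness for unlinks.} For the second claim, I would exhibit an explicit path of length exactly $b-c$ when $L$ is an unlink. Since $L$ is a $c$-component unlink in bridge position, there is a standard ``pancake'' bridge sphere, and one can find pants decompositions $x_0 \in \mc{D}_\kappa$, $y_0 \in \mc{D}_\lambda$ realized by disks that share as many curves as possible: the shared curves are those bounding compressing disks on \emph{both} sides (the ``obvious'' reducing curves of the unlink), of which there are enough that the remaining discrepancy costs only $b-c$ edges. Then $d(\mc{D}_\kappa,\mc{D}_\lambda) \leq b-c$, and combined with the lower bound, any efficient defining pair achieves equality. I would construct $x_0,y_0$ by induction on $b$: peel off one bridge at a time via an elementary perturbation-type move, each costing one edge, until the link is ``flat.''

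\textbf{Common curve when $b \geq 3$.} The third assertion is the main obstacle. Suppose $d(x,y) = b-c$ with $b \geq 3$, and let $\gamma$ be a geodesic $x = z_0, z_1, \dots, z_{b-c} = y$ in $\mc{P}(\Sigma)$. I want a curve $s$ appearing (up to isotopy) in every $z_i$. The strategy is to track the component-counting invariant $n(\cdot)$ from the lower-bound proof: because the geodesic has length exactly $b-c$ and $n$ must travel from its value at $x$ to its value at $y$ changing by at most $1$ per step, $n$ must change by exactly $1$ at \emph{every} edge — it is strictly monotonic along $\gamma$. A change of $n$ at an edge $z_i z_{i+1}$ forces the unique curve being swapped to be ``active'' (it bounds a disk causing a component count change), which in turn constrains the other $2b-4$ curves of $z_i$: in particular there must be a curve that is never the one swapped out and whose role is preserved throughout — one identifies it as a curve that, together with the disk data, separates off a fixed sub-link and hence persists. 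Concretely, because at step $i$ only one curve changes and that curve is forced to be the one carrying the change in $n$, a counting/pigeonhole argument (using $b \geq 3$ so that a pants decomposition has at least $3$ curves in the sphere case, leaving room after removing the at most $b-c$ ever-swapped curves) produces a curve common to consecutive decompositions, and a connectedness/consistency argument upgrades ``common to consecutive'' to ``common to all.'' I expect the delicate point to be ruling out the possibility that the common curve drifts — i.e. that different edges swap different curves in a way that leaves no single curve fixed throughout — and this is exactly where $b \geq 3$ and the strict monotonicity of $n$ are essential. I would follow Zupan's Lemma~4.2 closely here, adapting the bookkeeping to allow cut disks and spanning annuli in the relative case, and verifying that the Consistent Bounding Corollary (Corollary~\ref{consistent bounding cor}) is not needed at this stage but the dichotomy ``compressing disk vs.\ cut disk vs.\ spanning annulus'' is respected by the construction.
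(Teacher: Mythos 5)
Your proposed lower-bound argument rests on an invariant $n(z)$ that you describe as ``the number of components of the link obtained by compressing along the disks certifying $z \in \mc{D}_\kappa$.'' The difficulty is that the intermediate pants decompositions on a geodesic from $x$ to $y$ are \emph{not} in $\mc{D}_\kappa$ or $\mc{D}_\lambda$ (there is no reason a generic curve of $z_i$ bounds any disk on either side), so $n$ is only defined at the endpoints and there is nothing to interpolate. Your hedge (``or rather run the argument symmetrically so that a path from $x$ to $y$ must cross $b-c$ units of this invariant'') does not resolve this; one still has to extend the invariant to the interior of the geodesic, and no such extension is offered. The paper's proof, which is genuinely Zupan's in this setting, avoids this entirely by inducting on $b$: for the lower bound, one fixes a geodesic and looks at the set $\mc{C}$ of common curves. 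If $\mc{C} = \emptyset$ then $d(x,y) \geq 2b-3 \geq b-c$ trivially, since every one of the $2b-3$ curves of $x$ must be swapped out at some edge and each edge swaps only one curve. If $\gamma \in \mc{C}$, then $\gamma$ bounds a c-disk on \emph{both} sides (because it is in both $x$ and $y$), their union is a reducing sphere, and surgery splits the problem into two bridge-sphere problems of strictly smaller bridge number to which the inductive hypothesis applies, with the key arithmetic being $(b_1 - c_1) + (b_2 - c_2) = b - c$ in both the unpunctured and twice-punctured cases. The sharpness half is the same induction run forwards: Lemma~\ref{reduction lem} supplies a reducing sphere for the unlink, one surgers, assembles the pants decompositions on the pieces together with the curve $\Sigma \cap S$, and concatenates the geodesics. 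Your ``peel off one bridge via a perturbation'' sketch gestures at this, but a perturbation is a move on the bridge surface rather than on pants decompositions of a fixed $\Sigma$, so you would need to translate it into the reducing-sphere-plus-surgery language before it becomes a path in $\mc{P}(\Sigma)$.

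On the common-curve claim your instincts are closer to the mark than the packaging suggests, but you have over-engineered it. The strict monotonicity of $n$ plays no role and is anyway unavailable; and ``common to consecutive'' is automatic (adjacent vertices in $\mc{P}(\Sigma)$ share $2b-4$ curves by definition), so there is nothing to ``upgrade.'' The correct and much shorter observation is exactly your pigeonhole: a geodesic of length $d$ swaps out at most $d$ curves in total, so at least $(2b-3) - d$ curves of $x$ are never swapped and are therefore common to the whole geodesic; when $d = b-c$, $b \geq 3$, and $c \geq 1$ this count is $b + c - 3 \geq 1 > 0$. This is precisely the contrapositive of the $\mc{C} = \emptyset$ case in the paper's lower-bound induction, and it is all you need. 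I would rewrite the proof along the paper's lines: drop the invariant $n$, set up the induction on $b$, and let the reducing-sphere surgery (available once a common curve is in hand, via Lemma~\ref{consistent bounding}) carry both the lower bound and the sharpness.
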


\begin{proof}
We first show that if $L$ is the unlink of $c$ components, then there exist $x \in \mc{D}_\kappa$ and $y \in \mc{D}_\lambda$ with $d(x,y) = b- c$. If $b = 2$ and $c = 2$, by part (1) of Lemma \ref{reduction lem}, there is an unpunctured sphere intersecting $\Sigma$ in a single essential simple closed curve $x$. Observe that setting $x = y$, we have $x \in \mc{D}_\kappa$, $y \in \mc{D}_\lambda$ and $0 = d(x,y) = b-c$. If $b = 2$ and $c = 1$, then by part (2) of Lemma \ref{reduction lem}, there exist compressing disks $D_\kappa$ in $(B,\kappa)$ and $D_\lambda$ in $(B', \lambda)$ with $x = \boundary D_x$ and $y = \boundary D_y$ intersecting twice. Thus, $x \in \mc{D}_x$ and $y \in \mc{D}_y$ and we have $1 = d(x,y) = b - c$. Assume, therefore, that $b \geq 3$ and that for any unlink $L'$ of $c'$ components having a bridge sphere with $4 \leq 2b' < 2b$ points of intersection with $L'$, we have pants decompositions for the trivial tangles on either side of the bridge sphere of distance at most $b' - c'$ from each other and lying in the corresponding disk sets. By parts (1) and (3) of Lemma \ref{reduction lem}, there exists an unpunctured or twice-punctured reducing sphere $S$ for $\Sigma$. Surger $(S^3, L)$ and $\Sigma$ along $S$ to obtain links $(S^3, L_1)$ and $(S^3, L_2)$ with bridge spheres $\Sigma_1$ and $\Sigma_2$. Let $b_1$ and $b_2$ be the corresponding bridge numbers and $c_1 = |L_1|$ and $c_2=|L_2|$. Recall that $b_1 + b_2 - (c_1 + c_2) = b-c$ and both $b_1$ and $b_2$ are at most $b-1$, since $S \cap \Sigma$ is essential in $\Sigma$.  Notice also that if we transversally orient $\Sigma$, then $\Sigma_1$ and $\Sigma_2$ also inherit transverse orientations. For $i = 1,2$, if $b_i \geq 2$, let $x_i$ and $y_i$ be pants decompositions for $\Sigma_i$ with $x_i$ lying in the disk set for the tangle below $\Sigma_i$ and $y_i$ lying in the disk set for the tangle above $\Sigma_i$. By our inductive hypothesis, we may choose $x_i$ and $y_i$ so that $d(x_i, y_i) \leq b_i - c_i$. If $b_i = 1$, set $x_i = y_i = \nil$. The curves $x = x_1 \cup x_2 \cup (\Sigma \cap S)$ and $y = y_1 \cup y_2 \cup (\Sigma \cap S)$ are pants decompositions for $\Sigma$ lying in $\mc{D}_\kappa$ and $\mc{D}_\lambda$ respectively. Geodesic paths in $\mc{P}(\Sigma_1)$ and $\mc{P}(\Sigma_2)$ from $x_1$ to $y_1$ and from $x_2$ to $y_2$ can be concatenated to produce a path from $x$ to $y$ in $\mc{P}(\Sigma)$ of length at most $b-c$, as desired.

Now suppose that $L$ is any link. Since $(B, \kappa)$ and $(B', \lambda)$ are trivial tangles, each with at least two arcs, the sets $\mc{D}_\kappa$ and $\mc{D}_\lambda$ are non-empty. Thus, $d(\mc{D}_\kappa,\mc{D}_\lambda)$ is well-defined. Choose $x \in \mc{D}_\kappa$ and $y \in \mc{D}_\lambda$ so that $d(x,y) = d(\mc{D}_\kappa,\mc{D}_\lambda)$. We will show that $d(x,y) \geq b- c$ by induction on $b \geq 2$. 

Consider first the case when $b=2$, so $x$ and $y$ each correspond to a single curve on the 4-punctured sphere $\Sigma$.  If $x$ and $y$ correspond to the same curve, this curve must bound compressing disks on both sides, so $L$ is a two component unlink and $0 = d(x,y) = b - c$, as desired. If the curve is not the same for both vertices, then $d(x,y)\geq 1$.  Since $b=2$ and $c\geq 1$, we have $d(x,y) \geq b - c$. 
   
Suppose the result is true for all bridge spheres with bridge number less than $b$, and $b$ is at least 3. Each vertex of $\mc{P}(\Sigma)$ corresponds to $2b-3$ curves and $b\geq c$. Choose a geodesic path in $\mc{P}(\Sigma)$ from $x$ to $y$ and let $\mathcal{C} \subset \Sigma$ be the collection of common curves for the geodesic. Adjacent vertices in $\mc{P}(\Sigma)$ differ by a single curve. Thus, if $\mathcal{C}=\nil$, we have $d(x,y) \geq 2b - 3 \geq b - c$, as desired. 
Suppose, therefore, that $\mc{C} \neq \nil$. 
   
Let $\gamma \in \mathcal{C}$. Since $\gamma$ is a component of both $x$ and $y$, it bounds a c-disk in each of $(B, \kappa)$ and $(B', \lambda)$. Since every sphere in $S^3$ separates, these are both compressing disks or both cut disks. The union of these disks is either an unpunctured reducing sphere or a twice-punctured reducing sphere $S$  for $\Sigma$. Surger $(S^3, L)$ and $\Sigma$ along $S$, to obtain links $(S^3, L_1)$ and $(S^3, L_2)$ and bridge spheres $\Sigma_1$ and $\Sigma_2$. Let $b_1$ and $b_2$ be the corresponding bridge numbers and $c_1 = |L_1|$ and $c_2 = |L_2|$. Recall that $b_1, b_2 < b$ and $(b_1 + b_2) - (c_1 + c_2) = b-c$. Let $x_i, y_i$ be the restrictions of $x$ and $y$ to $\Sigma_i$, and observe that either they are empty or they are pants decompositions lying in the disk sets for the tangles above and below $\Sigma_i$. If they are nonempty, since $\gamma \subset \mc{C}$, the geodesic in $\mc{P}(\Sigma)$ from $x$ to $y$, restricts to a geodesic in $P(\Sigma_i)$ from $x_i$ to $y_i$.  Let $D_i$ be the length of this geodesic, or 0 if $x_i$ and $y_i$ are empty.  Observe that $d(x,y) = D_1 + D_2$. Consequently, by our inductive hypothesis, $d(x,y) \geq (b_1 - c_1) + (b_2 - c_2) = b-c$, as desired.
\end{proof}


  We now turn to the relative case. For a bridge disk $\Sigma$ we let $b$  be half the number of punctures and for a tangle $(Z,\tau)$ we let $c$ be the number of closed components and $v$ the number of arc components of $\tau$.  Note that $\beta = b - (c+v/2)$ is a non-negative integer since each closed component of $\tau$ contributes a positive even number of punctures to $\Sigma$ and each arc component of $\tau$ contributes an odd number of punctures to $\Sigma$. Recall that when $b \geq 3/2$, by Lemma \ref{lem:nonemptydisksets}, the disk sets on either side of $\Sigma$ are nonempty.

    \begin{lemma} \label{lemma:RelativeMindistance}
 Let $(Z,\tau)$ be a spanning relative tangle having bridge disk $\Sigma$ such that $b \geq 3/2$. Let $(B, \kappa)$ and $(B', \lambda)$ be the tangles on either side of $\Sigma$. For all $x \in \mc{D}_\kappa$ and $y \in \mc{D}_\lambda$, $d(x,y) \geq \beta$. Furthermore, if $(Z,\tau)$ is trivial then for any efficient pair $x \in \mc{D}_\kappa$ and $y \in \mc{D}_\lambda$ equality holds. Additionally, if $b\geq 2$ and $d(x, y) = \beta$, then for any geodesic in $\mc{P}(\Sigma)$ between the efficient pair there is a common curve.
 \end{lemma}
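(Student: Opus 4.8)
The plan is to mirror the structure of the proof of Lemma \ref{lemma:mindistance}, adapting the induction to the relative setting where the base cases now come from parts (3) and (4) of Lemma \ref{reduction lem} in addition to part (1), and where we surger along reducing \emph{annuli} as well as reducing spheres. Throughout, we use that $\beta = b - (c + v/2)$ is additive in exactly the way the bridge number and component counts combine under the surgeries described in Section \ref{sec:surgery}: if we surger along an unpunctured summing sphere then $b_1 + b_2 = b$ and $(c_1 + v_1/2) + (c_2 + v_2/2) = c + v/2$, so $\beta_1 + \beta_2 = \beta$; if along a twice-punctured summing sphere then $b_1 + b_2 = b+1$ and $(c_1+v_1/2)+(c_2+v_2/2) = (c+v/2)+1$, so again $\beta_1 + \beta_2 = \beta$; and the same bookkeeping holds for a reducing annulus (which behaves like an unpunctured summing sphere, splitting an arc into two spanning arcs). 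In all cases, if $s = P \cap \Sigma$ is essential, then $b_1, b_2 < b$, which powers the induction.

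\medskip

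\textbf{Lower bound $d(x,y) \ge \beta$.} Fix $x \in \mc{D}_\kappa$, $y \in \mc{D}_\lambda$ realizing $d(\mc{D}_\kappa, \mc{D}_\lambda)$; these sets are nonempty by Lemma \ref{lem:nonemptydisksets}. Induct on $b \geq 3/2$. For the base case $b = 3/2$: then $\Sigma$ is a disk with $3$ punctures, each vertex of $\mc{P}(\Sigma)$ is a single curve, and we must show $d(x,y) \geq \beta$. Here $\beta \in \{0,1\}$ depending on whether $\tau$ has an arc of odd intersection number; if $\beta = 0$ the bound is trivial, and if $\beta = 1$ we must rule out $x = y$, which would give a c-disk/spanning-annulus on both sides with the same boundary curve — by the Consistent Bounding analysis (Lemma \ref{consistent bounding}) this forces $\tau$ to have a configuration with $\beta = 0$, a contradiction. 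For the inductive step $b \geq 2$: choose a geodesic from $x$ to $y$ and let $\mc{C}$ be its set of common curves. If $\mc{C} = \nil$ then $d(x,y) \geq 2b - 2 \geq \beta$ (since $\beta \leq b$ and $2b - 2 \geq b$ for $b \geq 2$). If $\gamma \in \mc{C}$, then $\gamma$ bounds a c-disk or spanning annulus in each of $(B,\kappa)$ and $(B',\lambda)$; by Lemma \ref{consistent bounding} these assemble into an unpunctured or twice-punctured reducing sphere, or a reducing annulus, $P$ for $\Sigma$. Surger $(Z,\tau)$ and $\Sigma$ along $P$ to get $(Z_1, \tau_1)$, $(Z_2, \tau_2)$ with bridge disks/spheres $\Sigma_1, \Sigma_2$; one piece is a spanning relative tangle and the other may be a link in $S^3$ (so that Lemma \ref{lemma:mindistance} applies to it) or another spanning relative tangle. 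The restrictions $x_i, y_i$ of $x, y$ lie in the appropriate disk sets (or are empty when $b_i \leq 1$, i.e. $b_i = 1/2$ or $b_i=1$), and the geodesic restricts to geodesics between them since $\gamma \in \mc{C}$. By induction (and Lemma \ref{lemma:mindistance} where a side is a link), $d(x,y) = D_1 + D_2 \geq \beta_1 + \beta_2 = \beta$.

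\medskip

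\textbf{Equality for trivial tangles.} Assuming $(Z,\tau)$ is trivial, we exhibit an efficient pair at distance exactly $\beta$, by induction on $b$. The base cases use Lemma \ref{reduction lem}: part (2) when $\tau$ is a single arc and $b = 3/2$ gives compressing disks on the two sides meeting twice, so $d(x,y) = 1 = \beta$ (here $v = 1$, $c = 0$); when $\beta = 0$ at small $b$ we need $x = y$ bounding compatible disks/annuli, supplied by part (1) (reducible case) or, when $\tau$ is a union of three or more arcs each meeting $\Sigma$ once, by part (4) which gives a reducing annulus whose boundary curve we take as $x = y$. For the inductive step, parts (1), (3), (4) of Lemma \ref{reduction lem} guarantee an unpunctured reducing sphere, a twice-punctured reducing sphere, or a reducing annulus for $\Sigma$ (the hypothesis $b \geq 2$ and triviality ensure we are in one of these cases; note a trivial spanning tangle whose exterior is irreducible and which has an arc hitting $\Sigma$ three or more times falls under (3), while one all of whose arcs are vertical with $v \geq 3$ falls under (4), and $v = 1$ or $v=2$ with small $b$ are the base cases). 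Surger along it; the pieces are again trivial (unlinks or trivial spanning relative tangles, as in Lemma \ref{redcriterion}), with $b_i < b$, so by induction and Lemma \ref{lemma:mindistance} we get efficient pairs $x_i, y_i$ at distance $\beta_i$; setting $x = x_1 \cup x_2 \cup s$, $y = y_1 \cup y_2 \cup s$ and concatenating geodesics gives a path of length $\beta_1 + \beta_2 = \beta$, which by the lower bound is a geodesic, so this is an efficient pair. The common curve $s$ (present whenever $b \geq 2$, since then at least one reducing sphere/annulus is used and $s$ survives as a curve in both pants decompositions) gives the last claim, and any geodesic between an efficient pair at distance $\beta$ must cut as above, so it too has a common curve.

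\medskip

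The main obstacle I expect is the careful case analysis of the base cases and of which part of Lemma \ref{reduction lem} applies in the inductive step of the equality half — in particular handling the interaction between spanning arcs, bridge arcs, and vertical arcs (the $v$ versus $c$ bookkeeping and the parity of $\beta$), and making sure a reducing \emph{annulus} surgery genuinely produces trivial pieces with the claimed $\beta$-additivity. The lower bound half is a fairly mechanical transcription of the proof of Lemma \ref{lemma:mindistance} once Lemma \ref{consistent bounding} is invoked to promote a common curve to a reducing sphere or annulus; the delicate point there is confirming that when one surgered piece becomes a link in $S^3$ the hypotheses of Lemma \ref{lemma:mindistance} ($b \geq 2$, $c \geq 1$) are met or else $b_i \leq 1$ and that piece contributes $0$ on both sides of the inequality.
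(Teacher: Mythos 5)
Your approach is essentially the same as the paper's: induct on $b$, extract a common curve from a geodesic, promote it via Lemma \ref{consistent bounding} to a reducing sphere or reducing annulus, surger, and use the additivity of $\beta$. Two small points where your write-up diverges and needs tightening. First, the paper begins with a reduction you omit: when each of $\kappa$ and $\lambda$ has at most one vertical arc, one surgers along a twice-punctured sphere parallel to $\boundary\Sigma$ (as in Lemma \ref{one arc}) to reduce to the bridge-sphere case in $S^3$, so that Lemma \ref{lemma:mindistance} applies directly; this cleanly disposes of the $b=3/2$ base cases (in the remaining $b=3/2$ case both sides are unions of three vertical arcs, so $\mc{D}_\kappa = \mc{D}_\lambda = \mc{P}(\Sigma)$ and $\beta = 0$). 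Your direct case analysis of $b = 3/2$ via Lemma \ref{reduction lem}(2) and the ``Consistent Bounding analysis'' can be made to work but is vaguer than what you've written. Second, for the common-curve claim you need the \emph{strict} inequality $2b - 2 > \beta$ when $\mc{C} = \nil$; your parenthetical ``$\beta \le b$ and $2b-2 \ge b$'' only gives a non-strict bound. The fix is easy ($\tau \neq \nil$ forces $c + v/2 > 0$, hence $\beta < b$), but as written the final claim isn't proved. Finally, your parenthetical that a reducing annulus ``splits an arc into two spanning arcs'' is incorrect — a reducing annulus is by definition disjoint from $\tau$, so $c_1 + c_2 = c$ and $v_1 + v_2 = v$ with no splitting — though this error does not affect the $\beta$-additivity you invoke.
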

   \begin{proof}

This proof is much like that of Lemma \ref{lemma:mindistance}, and we refer to that proof for several steps. In particular, if each of $\kappa$ and $\lambda$ either contains no vertical arc or a single vertical arc, then as in the proof of Lemma \ref{one arc}, we may decompose along a twice-punctured sphere $P \subset (Z, \tau)$ intersecting $\Sigma$ in a curve parallel to $\boundary \Sigma$. The result will be a bridge disk $\Sigma_1$ of bridge number  0 or 1/2 for a (possibly empty) tangle in $D^2 \times I$, together with a bridge sphere $\Sigma_2$ of bridge number $b$ for a link in $S^3$. In this case, the analysis of Lemma \ref{lemma:mindistance} applies directly to $\Sigma_2$ and our lemma can be easily derived from that. Suppose, therefore that at least one of $\kappa$ and $\lambda$ contains at least two vertical arcs. 

Suppose first that $(Z,\tau)$ is trivial. We induct on the half integer $b \geq 3/2$. Suppose $b = 3/2$. By our remarks above, at least one of $\kappa$ or $\lambda$ contains at least two vertical arcs, and is, therefore, the union of three vertical arcs. If the other contains a bridge arc, then we contradict the fact that $(Z, \tau)$ is spanning. So both $\kappa$ and $\lambda$ are the union of three vertical arcs. Each curve in $\Sigma$ bounding a twice-punctured disk in $\Sigma$, lies in both $\mc{D}_\kappa$ and $\mc{D}_\lambda$. In particular $\mc{D}_\kappa = \mc{D}_\lambda = \mc{P}(\Sigma)$. Consequently, $d(\mc{D}_\kappa, \mc{D}_\lambda) = 0 = \beta$ and the result holds. 

Suppose, therefore, that $b \geq 2$ and the result holds for all bridge disks of bridge number at least 3/2 and less than $b$.  By Lemma \ref{reduction lem},  $(Z,\tau)$ contains an unpunctured reducing sphere, twice-punctured reducing sphere, or even reducing annulus. In the proof of Lemma \ref{lemma:mindistance}, we handled the reducing spheres and the analysis here is essentially the same, so we do not repeat it. Consider, therefore, the case when there is an even reducing annulus $A$. Let $(Z_1, \tau_1)$ and $(Z_2, \tau_2)$ be the relative tangles resulting from surgery along $A$, and let $\Sigma_1$ and $\Sigma_2$ be the bridge disks. Let $b_i = |\Sigma_i \cap \tau_i|$ and let $c_i$ be the number of closed components of $\tau_i$ and $v_i$ the number of arc components. Observe that $b_1 + b_2 = b$, $c_1 + c_2 = c$, and $v_1 + v_2 = v$. If $b_i \geq 3/2$, let $x_i$ and $y_i$ be the pants decompositions of $\Sigma_i$ provided by the induction hypothesis. If $b_i < 3/2$, set $x_i = y_i = \nil$. Then, as in the proof of Lemma \ref{lemma:mindistance}, $x_1 \cup x_2 \cup (S \cap \Sigma)$ and $y_1 \cup y_2 \cup (S \cap \Sigma)$ are the desired pants decompositions with $d(x,y) \leq \beta$.

Suppose now that $(Z,\tau)$ is any spanning relative tangle with $b \geq 3/2$. Let $x \in \mc{D}_x$ and $y \in \mc{D}_y$.  We again induct on $b$.  As before the case when $b = 3/2$ can be handled by reducing to Lemma \ref{lemma:mindistance}. So assume $b \geq 2$.  We will show that $d(x,y) \geq \beta$ and that if equality holds then there is a common curve on each geodesic from $x$ to $y$.

Choose a geodesic in $\mc{P}(\Sigma)$ from $x$ to $y$ and let $\mc{C}$ be the set of common curves for the geodesic. Recall that every pants decomposition of $\Sigma$ contains $2b-2$ curves. Note that $b \geq c + v/2$. Equality holds if and only if each closed component intersects $\Sigma$ exactly twice and each arc component intersects exactly once. Thus, if $b \geq 2$, then $b + (c + v/2) > 2$, and so $2b - 2 > \beta$. If $\mc{C} = \nil$, then $d(x,y) \geq |x| = 2b - 2$ since every curve of $x$ must be moved while traversing the geodesic.  In which case, $d(x,y) > \beta$.

Suppose, therefore, that there is a curve $\gamma \in \mc{C}$. As $\gamma \subset x \cap y$, it bounds a c-disk or even spanning annulus to each side of $\Sigma$. By Lemma \ref{consistent bounding}, it bounds the same type of disk or annulus to both sides. If $\gamma$ bounds compressing disks or cut disks for both $(B_x, \alpha_x)$ and $(B_y, \alpha_y)$, their union is an unpunctured or twice-punctured reducing sphere for $\Sigma$, and we proceed as in Lemma \ref{lemma:mindistance}. Suppose that $\gamma$ bounds an even spanning annulus $A_x$ in $B_x$  and an even spanning annulus $A_y$ in $B_y$. Recall that as $x \in \mc{D}_x$, there are curves $x' \subset \boundary_- B_x$ giving a weak pants decomposition of $\boundary_- B_x$ so that there are annuli joining $x'$ to a subset of $x$. All other curves of $x$ bound compressing or cut-disks in $(B_x, \alpha_x)$. By the definition of disk set, each curve of $x'$ bounds a disk in $\boundary_- B_x$ containing an even number of punctures. Similar statements hold for the curves of $y$. Thus, $A = A_x \cup A_y$ is an even reducing annulus. The argument now mimics that of the reducing sphere case to achieve the desired conclusion. 
\end{proof}  

\subsection{Common cut disks}
In this section, we study the properties of geodesics between the terms of an efficient defining pair. The goal is to show that if the distance is positive, then all curves in either pants decomposition of the efficient defining pair that bound cut disks are common curves.
   
 \begin{lemma}   \label{DisjointSpheres}
Suppose that $(Z,L)$ is either $(S^3, \text{unlink})$ or a spanning trivial relative tangle and that $\Sigma \subset Z$ is a bridge sphere or disk dividing $(Z,L)$ into tangles $(Z_1, L_1)$ and $(Z_2, L_2)$.  Let $S$ be a collection of pairwise disjoint unpunctured reducing spheres or even reducing annuli for $\Sigma$ such that no two curves of $S \cap \Sigma$ bound an unpunctured annulus in $\Sigma \setminus S$. Let $c$ be the number of closed components of $L$ and $v$ the number of arcs. Then
\[
2c + v -3 + |\boundary \Sigma| \geq |S|.
\]
\end{lemma}

\begin{proof}
Let $\Gamma$ be the dual tree to $S$ in $\Sigma$. If $\Sigma$ is a disk, let $r$ be vertex of $\Gamma$ corresponding to the component containing $\boundary \Sigma$ and consider it to be the root of $\Gamma$. If $S$ is sphere, then $\Gamma$ does not have a root. For $n = 1,2$, let $V_n$ be the number of non-root vertices of $\Gamma$ of degree $n$. Let $V_3$ be the number of non-root vertices of degree at least 3.  By the degree formula for graphs, 
\[
2|S| \geq V_1 + 2V_2 + 3V_3 + \deg(r)|\boundary \Sigma|.
\]
Since the Euler characteristic of a tree is 1, $V_1 + V_2 + V_3  = 1 + |S| - |\boundary \Sigma|$. Thus,
\[
2|S| \geq 3(1 + |S| - |\boundary \Sigma|) - V_2 - 2V_1 + \deg(r)|\boundary \Sigma|.
\]
Hence,
\[
2V_1 + V_2 + |\boundary \Sigma|(3 - \deg(r))  -3 \geq |S| 
\]

Suppose that $\Sigma' \subset \Sigma \setminus S$ corresponds to a non-root degree one vertex of $\Gamma$. Since each curve of $S \cap \Sigma$ is essential, $|L \cap \Sigma'| \geq 2$. Thus, as each component of $S$ is unpunctured, the region of $B \setminus S$ containing $\Sigma'$ contains a component of $L$. Furthermore, if it contains an arc component, then as all the annuli in $S$ are even it contains at least two arc components. Consequently, if $c_1$ (resp. $v_1$) is the number of closed components (resp. arcs) of $L$ lying in regions of $B \setminus S$ corresponding to non-root degree one vertices of $\Gamma$ then $c_1 + v_1/2 \geq V_1$. If $r$ has degree one, then the corresponding region of $B \setminus L$ contains at least one component of $L$, which could be either a closed component or an arc.

Suppose that $\Sigma' \subset \Sigma \setminus S$ corresponds to a non-root degree two vertex of $\Gamma$; let $B' \subset B \setminus S$ be the region containing $\Sigma'$.  Since no two curves of $S \cap \Sigma$ are parallel in the punctured surface $\Sigma$, $|L \cap \Sigma'| \geq 1$. Thus, $B'$ contains at least one component of $L$. Suppose that component is an arc. Let $\gamma_1$ and $\gamma_2$ be the components of $\boundary \Sigma'$ and choose the notation so that $\gamma_1$ bounds a disc in $\Sigma$ containing $\gamma_2$. Since $B'$ contains an arc, $\gamma_1$ is contained in an annulus component $A$ of $S$. Suppose, first, that $\gamma_2$ is contained in a sphere component $S' \subset S$, then $S'$ bounds a 3-ball in $B$ disjoint from all the arcs of $L$. Since $A$ is an even reducing annulus, we see that $B'$ contains at least two arcs of $L$. If $\gamma_2$ is contained in an annulus component $A'$ of $S$, then as both $A$ and $A'$ are even, we see that again $B'$ contains at least two arcs of $L$. Let $c_2$ (resp. $v_2$) be the number of closed components (resp. arcs) of $L$ lying in regions of $B \setminus S$ corresponding to non-root degree two vertices of $\Gamma$. We see that $c_2 + v_2/2 \geq V_2$. Thus, if $|\boundary \Sigma| = 0$, we have:
\[
2c + v - 3 \geq 2c_1 + v_1 + c_2 + v_2/2  - 3 \geq |S|.
\]
If $|\boundary \Sigma| = 1$ and $\deg(r) = 1$, then as mentioned there is at least one arc not contributing to $v_1 + v_2$. Thus, if $\Sigma$ is a disk, we have
\[
2c + v - 3 + 1  \geq 1 + 2c_1 + v_1 + c_2 + v_2/2  - 3 + (3 - \deg(r)) \geq |S|,
\]
as desired.
\end{proof}

  \begin{lemma}\label{lemma:commoncutdisk}
Let $(Z,L)$ be an unlink or spanning trivial relative tangle. Let $c$ be the number of closed components and $v$ the number of arc components of $L$. Suppose $\Sigma \subset Z$ is a bridge sphere or disk of bridge number $b$. If $\Sigma$ is a sphere, assume $b \geq 3$; if $\Sigma$ is a disk, assume $b \geq 2$. Let $(B,\kappa)$ and $(B', \lambda)$ be the tangles to either side of $\Sigma$. Suppose that $x \in \mc{D}_\kappa$, $y \in \mc{D}_\lambda$ are an efficient pair.  If $d(x,y) > 0$, then for any geodesic from $x$ to $y$,  there exists a common curve $s$ such that $s$ bounds cut disks in both $(B, \kappa)$ and $(B', \lambda)$.
\end{lemma}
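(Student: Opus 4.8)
The plan is to argue by contradiction, playing two estimates on the number of common curves of a geodesic against each other: a lower bound coming from the combinatorics of $\mc{P}(\Sigma)$ together with the distance formulas of Lemmas \ref{lemma:mindistance} and \ref{lemma:RelativeMindistance}, and an upper bound coming from Lemma \ref{DisjointSpheres}. First I would set $\beta = b - c$ when $\Sigma$ is a sphere and $\beta = b - (c + v/2)$ when $\Sigma$ is a disk, and let $n = 2b - 3$ or $n = 2b - 2$, respectively, be the number of curves in a pants decomposition of $\Sigma$, so that $n - \beta$ equals $b + c - 3$ or $b + c + v/2 - 2$. Since $(Z,L)$ is trivial and $(x,y)$ is an efficient pair, Lemma \ref{lemma:mindistance} (for the sphere) or Lemma \ref{lemma:RelativeMindistance} (for the disk) gives $d(x,y) = \beta$.

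Next I would fix an arbitrary geodesic $x = x_0, x_1, \dots, x_\beta = y$ in $\mc{P}(\Sigma)$ and let $\mc{C}$ be its set of common curves. Since consecutive vertices of the path differ in exactly one curve, at most $\beta$ of the $n$ curves of $x$ are ever removed along the path, so every curve of $x$ that is never removed persists in all $x_i$ and hence lies in $\mc{C}$. This gives $|\mc{C}| \geq n - \beta$, which is positive by the hypotheses on $b$, so in particular $\mc{C} \neq \nil$ (consistent with the ``additionally'' clauses of the previous two lemmas).

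Then I would suppose, toward a contradiction, that no curve of $\mc{C}$ bounds cut disks in both $(B,\kappa)$ and $(B',\lambda)$. Each $\gamma \in \mc{C}$ lies in $x \cap y$, so it bounds a c-disk or spanning annulus in each of $(B,\kappa)$ and $(B',\lambda)$; by Lemma \ref{consistent bounding} it bounds the same type of object on both sides, and since conclusion (2) of that lemma is excluded by our assumption, $\gamma$ bounds either compressing disks on both sides or spanning annuli on both sides. The curves of $\mc{C}$ are pairwise disjoint and pairwise non-parallel (being part of the pants decomposition $x$), and, using that $x \in \mc{D}_\kappa$ and $y \in \mc{D}_\lambda$ are each realized by a single disjoint system of c-disks and spanning annuli, I would produce from these objects a collection $S$ of pairwise disjoint unpunctured reducing spheres and reducing annuli for $\Sigma$ with $S \cap \Sigma = \mc{C}$ and no two of its boundary curves parallel (a pair of spanning annuli meeting along $\gamma$ forming a reducing annulus in the sense of Section \ref{sec:surgery}). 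Lemma \ref{DisjointSpheres} then gives $|\mc{C}| = |S| \leq 2c + v - 3 + \delta$, that is, $|\mc{C}| \leq 2c - 3$ in the sphere case and $|\mc{C}| \leq 2c + v - 2$ in the disk case.

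Combining this with $|\mc{C}| \geq n - \beta$ forces $b \leq c$ in the sphere case and $b \leq c + v/2$ in the disk case, each contradicting $d(x,y) = \beta > 0$. Hence some curve of $\mc{C}$ bounds cut disks in both $(B,\kappa)$ and $(B',\lambda)$, which is the desired conclusion. The step I expect to be the main obstacle is the construction of $S$ in the previous paragraph: one must verify that the defining systems of disks and annuli for $x$ and $y$ can be arranged so that, for each $\gamma \in \mc{C}$, the object attached to $\gamma$ on the $(B,\kappa)$ side and the one on the $(B',\lambda)$ side have matching type and are globally disjoint from all the others, so that their unions are genuinely the pairwise disjoint reducing spheres and reducing annuli required as input to Lemma \ref{DisjointSpheres}; Lemma \ref{consistent bounding} should again be the tool that rules out the mixed disk/annulus possibility.
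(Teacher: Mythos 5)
Your proof is correct and takes essentially the same route as the paper: compute $d(x,y)=\beta$ via Lemmas \ref{lemma:mindistance}/\ref{lemma:RelativeMindistance}, get the lower bound $|\mc{C}|\geq n-\beta$ on common curves, classify common curves via Lemma \ref{consistent bounding}, and, assuming none bounds cut disks on both sides, invoke Lemma \ref{DisjointSpheres} to get the contradictory upper bound. The technical point you flag (disjointness of the reducing spheres/annuli fed to Lemma \ref{DisjointSpheres}) is handled exactly as you anticipate: $x$ and $y$ are each realized by a single embedded collection, by definition of the disk set, and the common curves are disjoint as part of a pants decomposition, so the unions are automatically disjoint; the paper does not even comment on this.
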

   
\begin{proof}
Suppose $d(x,y) > 0$.  By Lemmas \ref{lemma:mindistance} and \ref{lemma:RelativeMindistance}, $d(x,y) = b - (c + v/2)$. Let $\delta = |\boundary \Sigma|$. Fix some geodesic from $x$ to $y$ and  let $\mathcal{C}$ be the collection of all common curves of the geodesic.  Since $b \geq 3-\delta$, Lemmas \ref{lemma:mindistance} and \ref{lemma:RelativeMindistance} show that $\mathcal{C}$ is nonempty.  

Since a vertex in $\mc{P}(\Sigma)$ corresponds to $2b-3 + \delta$ curves on $\Sigma$,  and adjacent vertices on the geodesic differ by exactly one curve,
\[
d(x,y)\geq 2b-3 + \delta -|\mathcal{C}|.
\]
Hence,
\[
|\mc{C}| \geq (2b - 3 + \delta) - d(x,y) = b + (c + v/2) - 3 + \delta.
\]

By Lemma \ref{consistent bounding}, each component of $\mc{C}$ either bounds a compressing disk to both sides, a spanning annulus to both sides, or bounds a cut disk to both sides. If no component of $\mc{C}$ bounds a cut disk to both sides, then by Lemma \ref{DisjointSpheres}, 
\[
2c + v - 3 + \delta \geq |\mc{C}| \geq b + c + v/2 - 3 + \delta.
\]

Now, $b \geq c + v/2$ since each closed component of $L$ intersects $\Sigma$ at least twice and each arc component at least once. Thus,
\[
2c + v - 3 + \delta \geq |\mc{C}| \geq 2c + v - 3 + \delta.
\]
Consequently,
\[
\mc{C} = 2b - 3 +\delta.
\]
This is the number of curves in $x$ (equivalently, in $y$) so $x = y$. But this implies that $d(x,y) = 0$, contradicting our hypothesis.
\end{proof}

\begin{lemma}\label{lemma:cutdisks}
Let $(Z,L)$ be an unlink or spanning relative trivial tangle. Let $c$ be the number of closed components and $v$ the number of arc components of $L$. Suppose $\Sigma \subset Z$ is a bridge sphere or disk of bridge number $b$. If $\Sigma$ is a sphere, assume $b \geq 3$; if $\Sigma$ is a disk, assume $b \geq 2$. Let $(B, \kappa)$ and $(B', \lambda)$ be the tangles to either side of $\Sigma$. Suppose that $x \in \mc{D}_\kappa$, $y \in \mc{D}_\lambda$ are an efficient pair. A curve $\gamma$ in $x$ bounds a cut disk in $(B, \kappa)$ if and only if it is also a curve in $y$ and bounds a cut disk in $(B', \lambda)$. Furthermore, such a curve is a common curve for every geodesic from $x$ to $y$. 
\end{lemma}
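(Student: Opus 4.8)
The plan is to bootstrap off Lemma \ref{lemma:commoncutdisk}, which produces at least one common curve bounding cut disks on both sides, and to run an inductive argument that "peels off" such a curve by surgery, showing that \emph{every} cut-disk curve in $x$ must behave this way. First I would set $\beta = b - (c+v/2) = d(x,y)$ (by Lemmas \ref{lemma:mindistance} and \ref{lemma:RelativeMindistance}, using the efficiency hypothesis), and note the statement is vacuous when $d(x,y)=0$ (then $x=y$ and there are no essential curves to worry about in the relevant sense, or rather every curve of $x$ bounds disks of matching type to both sides by Corollary \ref{consistent bounding cor}-style reasoning — I would handle this base case by hand), so assume $d(x,y)>0$. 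By Lemma \ref{lemma:commoncutdisk} there is a common curve $s$ bounding cut disks $D_\kappa \subset (B,\kappa)$, $D_\lambda\subset(B',\lambda)$; their union is a twice-punctured reducing sphere (or, in the spanning-tangle case, possibly after the usual Lemma \ref{consistent bounding} analysis, a twice-punctured reducing sphere) $P$ for $\Sigma$. Surger $(Z,L)$ and $\Sigma$ along $P$ to get $(Z_1,L_1), (Z_2,L_2)$ with bridge disks/spheres $\Sigma_1,\Sigma_2$ of bridge numbers $b_1,b_2<b$ and $b_1+b_2-(c_1+c_2+\ldots)$ accounting as in the earlier lemmas, so that $\beta_1+\beta_2=\beta$. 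Since $s$ is a common curve, the chosen geodesic from $x$ to $y$ restricts to geodesics from $x_i$ to $y_i$ in $\mc{P}(\Sigma_i)$, and $d(x_i,y_i)=\beta_i$, so each $(x_i,y_i)$ is again an efficient pair (by the minimality clause of Lemmas \ref{lemma:mindistance}, \ref{lemma:RelativeMindistance}).

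Now let $\gamma$ be any curve in $x$ bounding a cut disk in $(B,\kappa)$. If $\gamma = s$ (up to isotopy) we are done. Otherwise $\gamma$ survives into exactly one of $\Sigma_1$ or $\Sigma_2$, say $\Sigma_1$, where it still bounds a cut disk for the tangle below $\Sigma_1$ — here I must be slightly careful that surgering along $P$ doesn't turn the cut disk into a compressing disk, but since $P$ is disjoint from $\gamma$'s cut disk (we may isotope to make the two c-disks disjoint, using that $s$ and $\gamma$ are disjoint in $\Sigma$ and standard innermost-disk arguments in the irreducible complement) the single puncture of $\gamma$'s cut disk is unaffected. I would then invoke the inductive hypothesis applied to $(Z_1,L_1)$ with the efficient pair $(x_1,y_1)$: if $b_1$ is large enough ($\geq 3-\delta_1$), the hypothesis gives that $\gamma$ is a curve in $y_1$ bounding a cut disk below, hence is a common curve for the restricted geodesic, hence a common curve for the original geodesic, hence (being in both $x$ and $y$) bounds a cut disk to both sides back in $(Z,L)$. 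The degenerate small-$b_1$ cases ($b_1 \in \{1, 3/2, 2, 5/2\}$ as appropriate, where the induction doesn't apply) need to be dispatched directly: in these cases $\Sigma_1$ has few punctures and one checks by inspection — e.g. if $\beta_1 = 0$ then $x_1=y_1$ and the claim is immediate, and if $b_1$ is at the boundary of the hypothesis range one counts curves as in Lemma \ref{lemma:commoncutdisk}. The reverse implication (a cut-disk curve $\gamma\in y$ forces a matching cut disk in $x$) is symmetric, swapping the roles of $\kappa$ and $\lambda$.

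The main obstacle I anticipate is \textbf{the base/degenerate cases of the induction and the bookkeeping of which disk-type survives surgery}. The clean inductive step is essentially forced once one has Lemma \ref{lemma:commoncutdisk} and the surgery machinery of Section \ref{sec:surgery}, but making sure that (i) the cut disk for $\gamma$ can be made disjoint from the reducing sphere $P$ so its puncture count is preserved, and (ii) the small values of $b$ not covered by the inductive hypothesis (particularly $b=5/2$ in the relative case, or $b=3$ exactly for spheres) really do satisfy the conclusion, will require the careful curve-counting and Euler-characteristic arguments already used in the proofs of Lemmas \ref{lemma:commoncutdisk} and \ref{DisjointSpheres}. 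A secondary subtlety in the relative case is that a common curve could a priori bound spanning annuli rather than cut disks; Lemma \ref{consistent bounding} and Corollary \ref{consistent bounding cor} control this, but one must verify that among the $\geq b+(c+v/2)-3+\delta$ common curves guaranteed by the counting estimate, the cut-disk ones are exactly the cut-disk curves of $x$ — which is really the content of combining Lemma \ref{DisjointSpheres} (bounding the number of compressing/annulus common curves) with the pigeonhole count, exactly mirroring the end of the proof of Lemma \ref{lemma:commoncutdisk}.
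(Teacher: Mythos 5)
Your proposal follows the paper's proof in all essential respects: both anchor on the common cut-disk curve $s$ guaranteed by Lemma~\ref{lemma:commoncutdisk}, surger along the resulting twice-punctured reducing sphere, observe (via the distance bookkeeping of Lemmas~\ref{lemma:mindistance} and \ref{lemma:RelativeMindistance}) that the restricted pairs $(x_i,y_i)$ remain efficient, and apply induction to whichever piece $\Sigma_i$ contains $\gamma$. You correctly flag the two places where the argument needs care, and indeed those are precisely where the paper does its work. Two concrete points you left as ``anticipated obstacles'' that the paper nails down and you should too: first, the induction variable is $b + |\partial\Sigma|$, not $b$ alone — after surgering along $s$ in the disk case, one of the pieces (the sphere piece $\Sigma_1$) can have $b_1 = b$, so $b$ does not strictly decrease, while $b_1 + |\partial\Sigma_1| = b < b + 1 = b + |\partial\Sigma|$ does; second, the paper shows by a direct parity/puncture-count argument (not merely ``by inspection'') that whenever $b_i$ falls below the threshold for the hypotheses of the lemma, $\gamma$ cannot lie on $\Sigma_i$ at all, because both $\gamma$ and $s$ bound disks with an odd number $\geq 3$ of punctures and are not isotopic. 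These two observations are exactly what make your outlined induction close up.
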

   
 \begin{remark}\label{distzero}
Notice that if $x$ and $y$ are an efficient defining pair with  $d(x,y) = 0$, then $x = y$ and a curve in $x = y$ bounds a cut disk to one side if and only if it bounds a cut disk to the other side by Lemma \ref{consistent bounding}. The only geodesic from $x$ to $y$ is the constant geodesic, so such curves are again common curves for any (i.e. the only) geodesic from $x$ to $y$. The difference between this situation and that when $d(x,y) > 0$ is that such a curve need not exist. If $d(x,y) > 0$, then such a curve exists by Lemma \ref{lemma:commoncutdisk}.
\end{remark}
   
\begin{proof}[Proof of Lemma \ref{lemma:cutdisks}]
Suppose, first, that $\gamma$ is a curve common to $x$ and $y$ which bounds a cut disk in $(B, \kappa)$. By Lemma \ref{consistent bounding}, it also bounds a cut disk in $(B', \lambda)$. Thus, it suffices to show that if $\gamma$ is a curve in $x$ bounding a cut disk in $(B, \kappa)$, then it also lies in $y$ and is a common curve to every geodesic from $x$ to $y$.  If $d(x,y) = 0$, then $x = y$ and the result follows trivially. Assume $d(x,y) > 0$ and that $\gamma \subset x$ is a curve bounding a cut-disk in $(B, \kappa)$. Fix some particular geodesic $\alpha$ from $x$ to $y$ in $\mc{P}(\Sigma)$.  By Lemma \ref{lemma:commoncutdisk}, there exists a common curve $s \subset \Sigma$ for the geodesic that bounds a cut-disk to both sides of $\Sigma$. If $s = \gamma$ we are done, so suppose that $s \neq \gamma$.

We induct on $b + |\boundary \Sigma|$. We consider two base cases. First, $\Sigma$ is a disk and $b=2$. Second, $\Sigma$ is a sphere and $b=3$. Suppose $\Sigma$ is a disk and  $|\Sigma \cap L| = 4$. The curve $\gamma$ must bound a disk in $\Sigma$ which contains 3 punctures. Any pants decomposition of the 4-punctured disk contains at most one such curve, so in both $x$ and $y$ that curve is $\gamma = s$, contrary to our assumption. Suppose $\Sigma$ is a sphere and $b = 3$. The curve $\gamma$ must cut $\Sigma$ into two disks each of which contains exactly three punctures. Any pants decomposition of the 6-punctured sphere contains at most one such curve. Hence, $\gamma=s$. This argument applies symmetrically to curves bounding cut disks in $(B', \lambda)$. Again, the uniqueness of $s$ in $x$ and $y$ implies the result.

Suppose $b + |\boundary \Sigma| > 2$. Surger $(Z, L)$ and $\Sigma$ along the twice-punctured sphere $S$ such that $S \cap \Sigma = s$. This results in two trivial pairs $(Z_1, L_1)$ and $(Z_2, L_2)$ containing bridge surfaces $\Sigma_1$ and $\Sigma_2$. Choose the notation so that $\boundary \Sigma \subset \boundary \Sigma_2$. Note that $Z_1 = S^3$ while $Z_2$ is either $S^3$ or $D^2 \times I$. Let $b_1$ and $b_2$ be their bridge numbers respectively and recall that $b_1 + b_2 = b + 1$. We show that we can apply our inductive hypothesis to whichever of $\Sigma_1$ or $\Sigma_2$ contains $\gamma$.

The curve $s$ bounds a disk in $\Sigma$ containing an odd number (at least 3) of punctures. If $\Sigma$ is a sphere, then it bounds two such disks. Thus, in either case, $\Sigma_1$ is a sphere with at least 4 punctures and $b_1 \geq 2$. Thus, $b_2 < b$ and $b_2 + |\boundary \Sigma_2| < b + |\boundary \Sigma|$. If $b_1 = 2$, then $\gamma$ does not lie in $\Sigma_1$, as both it and $s$ bound disks in $\Sigma$ containing an odd number (at least 3) of punctures and they are not isotopic (by assumption).

Similarly, if $\Sigma$ is a sphere, then $\Sigma_2$ is also a sphere with at least 4 punctures, $b_2 \geq 2$, and \[b_1 = b_1 + |\boundary \Sigma_1| < b + |\boundary \Sigma| = b.\] If $b_2 = 2$, then, as above, $\gamma$ does not lie in $\Sigma_2$.

If $\Sigma$ is a disk, then $\Sigma_2$ is a disk with at least two punctures and $\Sigma_1$ is a sphere. Thus, $b_2 \geq 1$ and $v_1 = 0$. Consequently, \[b_1 = b_1 + |\boundary \Sigma_1| \leq b < b +|\boundary \Sigma|.\]  If $b_2 = 1$, then $s$ bounds a once-punctured annulus in $\Sigma$ with $\boundary \Sigma$. In which case, $\gamma$ does not lie in $\Sigma_2$.  If $b_2 = 3/2$, then $s$ bounds a twice-punctured annulus in $\Sigma$ with $\boundary \Sigma$. Since both $\gamma$ and $s$ bound disks in $\Sigma$ with an odd number of punctures, again $\gamma$ does not lie in $\Sigma_2$.

We conclude that if $\gamma$ lies in $\Sigma_i$, then $b_i + |\boundary \Sigma_i| < b + |\boundary \Sigma|$, and $b_i \geq 2$ if $\Sigma_i$ is a disk and $b_i \geq 3$ if $\Sigma_i$ is a sphere.  

Let $c_1$ and $c_2$ be the number of closed components of $L_1$ and $L_2$ respectively and $v_2$ the number of arc components of $L_2$. Recall that $c_1 + (c_2 + v_2/2)= c + (v/2) + 1$. Let $d_i$ be the distances in $\mc{P}(\Sigma_i)$ between the the induced pants decompositions for $\Sigma_i$. The original geodesic path $\alpha$ from $x$ to $y$ restricts to a geodesic path $\alpha_1$ in $\mc{P}(\Sigma_1)$. Likewise, $\alpha$ restricts to a geodesic path $\alpha_2$ in $\mc{P}(\Sigma_2)$.  Since $s$ is a common curve for the geodesic from $x$ to $y$, $d = d_1 + d_2$. By Lemma \ref{lemma:mindistance} and Lemma \ref{lemma:RelativeMindistance}, $d_1\geq b_1-c_1$ and $d_2\geq b_2 - (c_2 + v_2/2)$. 

 Consequently, 
\[
b - (c + v/2)= (b_1 - c_1) + (b_2 - (c_2 + v_2/2)) \leq d_1 + d_2 = d = b - (c + v/2)
\]
Thus, $d_1= b_1-c_1$ and $d_2= b_2 - (c_2 + v_2/2)$. 

Suppose that $\gamma \subset \Sigma_i$. If $d_i = 0$, then $\gamma$ is also a common curve for $\alpha_i$ (and thus for $\alpha$), as desired. If $d_i > 0$, we apply the inductive hypothesis to conclude that $\gamma$ is again a common curve for $\alpha_i$ (and thus for $\alpha$), as desired.
\end{proof}

\section{Reducible Trisections}\label{sec:reducibletrisection}
In this section, we begin by considering the case when $\L(\mc{T}) = 0$. A similar analysis could likely be used to classify trisections with $\L \leq 2$. Afterwards, we establish a relationship between $\L$, $b(\mc{T})$, and the topology of $S$. We use the classification of bridge trisections of surfaces in $S^4$ having bridge number at most 3 by Meier and Zupan \cite[Theorem 1.8]{MZ1}. Such surfaces are unknotted. Furthermore if the bridge number is equal to 1, then the surface is an unknotted 2-sphere.  

\begin{theorem}\label{factor}
Suppose that $\mc{T}$ is a bridge trisection of a surface $S$ in $W$, with $W$ equal to $S^4$ or $B^4$, such that $\L(\mc{T}) = 0$. Then $S$ is a connected sum or a distant sum of some number of copies of unknotted spheres, unknotted projective planes, and boundary-parallel disks. Furthermore, $\mc{T}$ is the connected sum and distant sum of trisections for those surfaces.
\end{theorem}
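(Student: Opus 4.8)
For the final statement I would argue by induction on the bridge number $b = b(\mc{T})$, handling closed surfaces in $S^4$ first and surfaces in $B^4$ afterwards, so that when a surface in $B^4$ is split off as a closed surface in $S^4$ the $S^4$ case is already available. The base cases are those in which the trisection surface $\Sigma$ is not admissible, together with the two minimal admissible cases $b=2$ (spheres) and $b=3/2$ (disks). When $\Sigma$ is not admissible, $b=1$ in the closed case and Meier--Zupan's classification \cite[Theorem 1.8]{MZ1} identifies $S$ as an unknotted $2$-sphere; while $b\in\{1/2,1\}$ in the relative case, where Lemma \ref{only arcs} (when every $T_{ij}$ consists only of vertical arcs) and Lemma \ref{one arc} (when $\boundary S$ is a $0$-braid, which splits off a bridge number one closed surface in $S^4$) identify $S$ as a distant sum of boundary-parallel disks and at most one unknotted $2$-sphere, with $\mc{T}$ the corresponding distant sum. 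In the two minimal admissible cases $\mc{P}(\Sigma)$ is a graph whose vertices are single curves, and one argues directly: $\L(\mc{T})=0$ either produces a curve bounding $c$-disks in all three tangles of the spine (so that $\mc{T}$ reduces, as below) or pins down $S$, via the Euler characteristic and Lemma \ref{only arcs}, as a small distant sum of unknotted spheres and boundary-parallel disks. This last step is also where one verifies that no non-orientable surface has $\L=0$.

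For the inductive step, suppose $\Sigma$ is admissible and $\L(\mc{T})=0$. Unwinding the definition, there are pants decompositions $p_{12}\in\mc{D}_{T_{12}}$, $p_{23}\in\mc{D}_{T_{23}}$, $p_{31}\in\mc{D}_{T_{31}}$ such that $(p_{12},p_{31})$, $(p_{12},p_{23})$, and $(p_{23},p_{31})$ are efficient defining pairs for the unlinks or spanning trivial relative tangles $(Z_1,T_1)$, $(Z_2,T_2)$, $(Z_3,T_3)$ respectively; the content of $\L(\mc{T})=0$ is exactly that the linking distances $d(p^i_{ij},p^j_{ij})$ all vanish, so that a single $p_{ij}$ serves in both efficient pairs in which $Z_{ij}$ participates. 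The heart of the matter is to produce an essential simple closed curve $s\subset\Sigma$ lying in all three of $p_{12},p_{23},p_{31}$ and bounding, consistently, a compressing disk in each $Z_{ij}$, a cut disk in each $Z_{ij}$, or a spanning annulus in each $Z_{ij}$.

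To find $s$, consider two cases. If some efficient defining pair has positive distance, say $d(p_{12},p_{31})>0$, then Lemma \ref{lemma:commoncutdisk} applied to $(Z_1,T_1)$ supplies a common curve $s$ for a geodesic from $p_{12}$ to $p_{31}$ bounding cut disks in $Z_{12}$ and $Z_{31}$, and one further application of Lemma \ref{lemma:cutdisks} to $(Z_2,T_2)$ shows $s$ is also a curve of $p_{23}$ bounding a cut disk in $Z_{23}$, so $s$ works; the low-complexity exceptions to Lemmas \ref{lemma:commoncutdisk} and \ref{lemma:cutdisks} are precisely the minimal admissible base cases already handled. If instead all three distances vanish, then $p_{12}=p_{23}=p_{31}=:p$ lies in $\mc{D}_{T_{12}}\cap\mc{D}_{T_{23}}\cap\mc{D}_{T_{31}}$, and the Consistent Bounding Corollary \ref{consistent bounding cor} says each curve of $p$ bounds compressing disks in all three $Z_{ij}$, cut disks in all three, or spanning annuli in all three; take $s$ to be any curve of one of the first two types. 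The remaining possibility, that every curve of $p$ bounds a spanning annulus in each $Z_{ij}$, cannot occur in the closed case (there is no negative boundary), and in the relative case a count of the punctures of $\boundary_- Z_{ij}$ against the number of curves in a weak pants decomposition forces each $T_{ij}$ to consist only of vertical arcs, so Lemma \ref{only arcs} directly exhibits $S$ as a distant sum of boundary-parallel disks realized by a distant sum of trisections.

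Finally, given $s$ bounding compressing disks (or cut disks) in all three $Z_{ij}$, the trisection $\mc{T}$ is reducible, hence by Lemma \ref{redcriterion} a nontrivial connected or distant sum $\mc{T}=\mc{T}_1\#\mc{T}_2$ (or $\mc{T}_1\sqcup\mc{T}_2$) obtained by surgering along the reducing sphere $P$ with $P\cap\Sigma=s$; since $s$ is essential the two pieces have bridge number strictly less than $b$. To invoke the inductive hypothesis I check $\L(\mc{T}_1)=\L(\mc{T}_2)=0$: cutting $\Sigma$ along $s$ carries $p_{12},p_{23},p_{31}$ to pants decompositions of $\Sigma_1$ and $\Sigma_2$ lying in the disk sets of the corresponding tangle thirds; since $s$ is a common curve the relevant distances split additively, and equality in Lemmas \ref{lemma:mindistance} and \ref{lemma:RelativeMindistance} forces each restricted pair to be efficient, while the linking distances for $\mc{T}_m$ vanish because the same restricted decomposition appears on both sides of each tangle third. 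Reassembling the conclusions for $\mc{T}_1$ and $\mc{T}_2$ yields both claims. I expect the real work to lie in the minimal admissible base cases -- where no reducing curve need exist and one must lean on \cite[Theorem 1.8]{MZ1}, including to exclude non-orientable $S$ -- and in the spanning-annulus subcase, where one must see that the putative reducing annulus is forced away precisely because $S$ is already a distant sum of boundary-parallel disks.
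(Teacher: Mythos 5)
Your proposal follows essentially the same route as the paper's proof: induct on bridge number, handle small $b$ via Meier--Zupan's classification and Lemmas \ref{one arc} and \ref{only arcs}, and in the inductive step produce an essential curve lying in all three of $p^1_{12}=p^2_{12}$, $p^2_{23}=p^3_{23}$, $p^1_{13}=p^3_{13}$ that bounds c-disks consistently, then reduce via Lemma \ref{redcriterion} and split the pants-complex data across the two summands. Your two-case dichotomy (some pairwise distance positive versus all three vanish) is a mild repackaging of the paper's three cases based on how many of $p,q,r$ coincide, and the key inputs --- the Consistent Bounding Corollary \ref{consistent bounding cor}, Lemma \ref{lemma:commoncutdisk}, Lemma \ref{lemma:cutdisks} with Remark \ref{distzero}, and the additivity of distances across a common curve from Lemmas \ref{lemma:mindistance} and \ref{lemma:RelativeMindistance} --- are used in the same way.
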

\begin{proof}
Suppose the spine for $\mc{T}$ is  $(Z_{12},T_{12})\cup_\Sigma (Z_{23}, T_{23}) \cup_\Sigma (Z_{13}, T_{13}) \cup (V, V \cap \boundary S)$.

We induct on $b$. However, we treat the cases when $b$ is small separately since many of the results from Section 
\ref{pants} require a lower bound on $b$. 

First, suppose that $\Sigma$ is a sphere. There is a unique trisection of bridge number one, that of the unknotted sphere. If $b=2$, then  by \cite[Theorem 1.8]{MZ1} $S$ is unknotted. Moreover, for a $(b;c_1, c_2, c_3)$-bridge trisection of $S$, $-b + c_1 + c_2 + c_3 =\chi(S)$. So, $\chi(S)\geq 1$. Since a bridge number two surface can have at most two components, $S$ is either an unknotted sphere, unknotted projective plane, or the distant sum of unknotted spheres. In the latter case, $\mc{T}$ is the distant sum of bridge number one trisections.

Next, suppose $\Sigma$ is a disk. If each $T_{ij}$ has zero or one vertical arcs, then by Lemma \ref{one arc}, $\mc{T}$ is the distant sum or connected sum of a bridge trisection of the same bridge number of a surface in $S^3$ with either a bridge trisection of the empty surface in $B^3$ or a bridge trisection of bridge number 1/2 of a $\boundary$-parallel disk in $B^3$. Assume, therefore, that each $T_{ij}$ has at least two vertical arcs. In particular, $b \geq 1$.

If $b=1$, then each $T_j$ is a trivial spanning tangle. Since each $T_{ij}$ contains at least (and, therefore, exactly) two vertical arcs, every $T_j$ is a 2-strand braid and the result follows by Lemma \ref{only arcs}. If $b=3/2$, then each $T_{ij}$ is the union of 3 vertical arcs, since each $T_{ij}$ contains at least two vertical arcs. The result again follows from Lemma \ref{only arcs}.  

Henceforth, we assume that if $\Sigma$ is a sphere, then $b \geq 3$ and if $\Sigma$ is a disk, then $b \geq 2$. We also assume that the result holds for all trisections of bridge number strictly smaller than $\mc{T}$ having $\L = 0$.

Choose efficient pairs $(p^j_{ij}, p^j_{jk})$ in the disk sets for the tangles $(Z_{ij}, T_{ij})$ and $(Z_{jk}, T_{jk})$ as in the definition of $\mc{L}(\mc{T})$ so that
\[
0 = \mc{L}(\mc{T}) = d(p^1_{12}, p^2_{12}) + d(p^2_{23}, p^3_{23}) + d(p^1_{13}, p^3_{13}).
\]
We conclude that $p^1_{12} = p^2_{12}$,  $p^2_{23} = p^3_{23}$, and $p^1_{13} = p^3_{13}$. For simplicity call these points $p$, $q$, and $r$, respectively. Fix geodesics between them.

\textbf{Case 1:} $p = q = r$.

In this case, each curve $s$ of $p$ bounds a c-disk or spanning annulus in each of $(Z_{12},T_{12})$, $ (Z_{23}, T_{23})$, and $(Z_{13}, T_{13})$. By The Consistent Bounding Corollary \ref{consistent bounding cor}, $s$ satisfies one of the following:
\begin{enumerate}
\item $s$ bounds a compressing disk in each of $(Z_{12},T_{12})$, $ (Z_{23}, T_{23})$, and $(Z_{13}, T_{13})$.
\item $s$ bounds a cut disk in each of $(Z_{12},T_{12})$, $ (Z_{23}, T_{23})$, and $(Z_{13}, T_{13})$.
\item $s$ bounds a spanning annulus in each of $(Z_{12},T_{12})$, $ (Z_{23}, T_{23})$, and $(Z_{13}, T_{13})$.
\end{enumerate}

By Lemma \ref{redcriterion}, if any curve $s$ of $p$ satisfies (1) or (2), the trisection $\mc{T}$ is a distant sum or connected sum of two other trisections $\mc{T}_1$ and $\mc{T}_2$ with bridge surfaces $\Sigma_1$ and $\Sigma_2$ respectively. If $\Sigma_i$ is admissible, then the restriction $p_i$ of $p$ to the side of $s$ in $\Sigma$ corresponding to $\Sigma_i$ is nonempty. In that case, $p_i$ lies in the disk sets for all three tangles forming the spine of $\mc{T}_i$ and so $\mc{L}(\mc{T}_i) = 0$, as desired. If $\Sigma_i$ is not admissible, then $\L(\mc{T}_i) = 0$, by definition.  Recall that $b(\mc{T}_i) < b(\mc{T})$ for $i = 1,2$. Apply our inductive hypothesis to $\mc{T}_i$ to conclude that $\mc{T}$ also satisfies the conclusion of the theorem.

Suppose, therefore, that every curve $s$ of $p$ satisfies (3) and no curve of $p$ satisfies (1) or (2). This implies that each $T_{ij}$ contains only vertical arcs. By Lemma \ref{only arcs}, $S$ is the union of boundary parallel disks, each having bridge number 1/2 with respect to $\Sigma$ and the result again follows. 

\textbf{Case 2:} No two of $p$, $q$, and $r$ are equal.

Let $\alpha(pq)$, $\alpha(qr)$ and $\alpha(pr)$ be the chosen geodesics between $p$ and $q$, between $q$ and $r$, and between $p$ and $r$, respectively. Since $d(p,q) > 0$ and $d(p, r) > 0$, by Lemma \ref{lemma:cutdisks} and Remark \ref{distzero}, every curve in $p$ that bounds a cut disk in $(Z_{12}, T_{12})$ (and there is at least one such curve) is a common curve for both $\alpha(pq)$ and $\alpha(pr)$. In particular if $s$ is such a curve, then $s$ bounds a cut disk in all three of  $(Z_{12},T_{12})$, $ (Z_{23}, T_{23})$, and $(Z_{13}, T_{13})$. Observe it must also be a common curve for $\alpha(qr)$ by Lemma \ref{lemma:cutdisks}. Consequently, by Lemma \ref{redcriterion}, the result holds for $\mc{T}$, as in Case 1.

\textbf{Case 3:} Exactly two of $p$, $q$, and $r$ are equal.

Without loss of generality, assume that $p = q$ and $p \neq r$. Since $d(p,r) > 0$, by Lemma \ref{lemma:cutdisks} and Remark \ref{distzero}, there is a curve bounding a cut disk in $(Z_{12}, T_{12})$ that is a common curve for both $\alpha(pr)$ and $\alpha(qr)$ and there is at least one such curve $s$. Since $\alpha(pq)$ is the constant geodesic, $s$ is also a common curve for $\alpha(pq)$. The result now follows as in Case 2.
\end{proof}

Applying this result to a trisection witnessing $\L(S)$ and using the fact that the connected sum and distant sum of unknotted 2-spheres  and unknotted nonorientable surfaceis the distant sum of unknotted 2-spheres, we have the first theorem advertised in the Introduction.

\begin{theorem}\label{firstthm}
Suppose that $S \subset S^4$ is a smooth, closed surface with $\L(S) = 0$. Then $S$ is the distant sum of unknotted 2-spheres and unknotted nonorientable surfaces.
\end{theorem}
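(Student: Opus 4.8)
The plan is to deduce Theorem \ref{firstthm} almost immediately from Theorem \ref{factor}, so the work is bookkeeping rather than new ideas. First I would let $\mc{T}$ be a bridge trisection of $S$ realizing $\L(S) = 0$, i.e.\ with $b(\mc{T}) = b(S)$ and $\L(\mc{T}) = 0$; such a trisection exists by the definition of $\L(S)$ as a minimum. Applying Theorem \ref{factor} (with $W = S^4$) tells us that $S$ is a connected sum and distant sum of finitely many unknotted $2$-spheres and $\boundary$-parallel disks. Since $S$ is closed, there are no disk summands, so $S$ is built from unknotted $2$-spheres using the connected sum and distant sum operations only.

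The remaining point is the elementary topological fact that any surface obtained from unknotted $2$-spheres by iterated connected sums and distant sums is itself a distant sum of unknotted $2$-spheres, i.e.\ an unlink of trivial spheres. For this I would argue by induction on the number of summands: the connected sum of two unknotted $2$-spheres is again an unknotted $2$-sphere (genus is additive under connected sum, and the connected sum of two trivial spheres bounds a handlebody, hence is trivial by the definition of unknotted recalled in the introduction), and the distant sum simply increases the number of components. So each connected-sum operation in the construction tree can be absorbed into one of its sphere inputs, and after collapsing all of them one is left with a distant sum of unknotted $2$-spheres. By definition (recalled in the introduction via \cite{hosokawa1979}), a distant sum of unknotted $2$-spheres is exactly an unlink, which is the desired conclusion.

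The only genuine subtlety — and the part I would be most careful about — is making sure the connected-sum/distant-sum decomposition produced by Theorem \ref{factor} is being interpreted correctly: Theorem \ref{factor} asserts $\mc{T}$ itself is the connected and distant sum of trisections of the individual unknotted spheres (and disks), so there is no hidden ambiguity about whether the $S$-level decomposition matches the trisection-level decomposition. Given that, and given that $S$ closed forces all the disk summands to be absent, the proof is complete. I do not expect any serious obstacle; the statement is essentially a corollary, and the content lives entirely in Theorem \ref{factor} and in the classification of low-bridge-number surfaces from \cite[Theorem 1.8]{MZ1} used there.

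\begin{proof}
Let $\mc{T}$ be a bridge trisection of $S$ with $b(\mc{T}) = b(S)$ and $\L(\mc{T}) = \L(S) = 0$; such a trisection exists by the definition of $\L(S)$. By Theorem \ref{factor}, applied with $W = S^4$, the surface $S$ is a connected sum and distant sum of some number of copies of unknotted $2$-spheres and $\boundary$-parallel disks. Since $S$ is closed, there are no disk summands, so $S$ is obtained from finitely many unknotted $2$-spheres by iterated connected sums and distant sums.

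We claim any such surface is a distant sum of unknotted $2$-spheres. We induct on the number of summands. A single unknotted $2$-sphere is of the desired form. For the inductive step, note that the distant sum of a distant sum of unknotted $2$-spheres with another unknotted $2$-sphere is again a distant sum of unknotted $2$-spheres. For the connected sum, the connected sum of two unknotted $2$-spheres is an unknotted $2$-sphere: the result is a sphere (genus is additive under connected sum) and it bounds a $3$-ball in $S^4$, hence is trivial. Thus each connected-sum operation appearing in the construction can be absorbed into one of its unknotted-sphere inputs, and collapsing all of them leaves a distant sum of unknotted $2$-spheres.

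By the definition recalled in the Introduction, a distant sum of unknotted $2$-spheres in $S^4$ is precisely an unlink. Hence $S$ is an unlink of unknotted $2$-spheres.
\end{proof}
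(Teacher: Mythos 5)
Your proof is correct and follows essentially the same route as the paper: apply Theorem \ref{factor} to a trisection realizing $\L(S)=0$, observe (as the paper states as a one-line ``fact'') that the connected sum and distant sum of unknotted 2-spheres is again a distant sum of unknotted 2-spheres, and conclude. You simply spell out the absorption-of-connected-sums induction and note explicitly that closedness of $S$ rules out the disk summands, both of which the paper leaves implicit.
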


For the statement  and proofs of the next theorems, let $S$ be a surface in $S^4$ and consider a bridge trisection $\mc{T}$ of $S$ having bridge number $b$. Let  $\mc{S}=(Z_{12}, T_{12})\cup_{\Sigma}(Z_{23}, T_{23})\cup_{\Sigma}(Z_{31}, T_{31})$ be the spine of $\mc{T}$ and let $c_{j}$ be the number of closed components of $L_j=\alpha_{ij}\cup \alpha_{jk}$. Set $L=\mc{L}(\mc{T})$.  For each $\{i,j,k\} = \{1,2,3\}$, choose efficient pairs $(p^j_{ij}, p^j_{jk})$ for the tangles $(Z_{ij}, T_{ij})$ and $(Z_{jk}, T_{jk})$ so that
\[
L = d(p^1_{12}, p^2_{12}) + d(p^2_{23}, p^3_{23}) + d(p^1_{13}, p^3_{13}).
\]
For each $\{i,j,k\} = \{1,2,3\}$ choose a geodesic $\alpha(i)$ from $p^i_{ij}$ to $p^i_{ik}$ and a geodesic $\alpha(ij)$ from $p^i_{ij}$ to $p^j_{ij}$. Let $\mc{C}(i)$ be the set of common curves for $\alpha(i)$ and $\mc{C}(ij)$ be the set of common curves for $\alpha(ij)$.

\begin{theorem}\label{connected sum1}
Suppose that $\mc{T}$ is a bridge trisection of a smooth, closed surface $S \subset S^4$ with $b \geq 3$. If
\[
L \leq 2(c_1 + c_2 + c_3)  - 9,
\]
then $\mc{T}$ is a nontrivial connected sum or distant sum.
\end{theorem}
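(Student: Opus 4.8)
The plan is to produce an essential simple closed curve $s\subset\Sigma$ that bounds a compressing disk in each of $(Z_{12},T_{12})$, $(Z_{23},T_{23})$, $(Z_{31},T_{31})$, or a cut disk in each of them; by Lemma \ref{redcriterion} such a curve certifies that $\mc{T}$ is a nontrivial connected sum or distant sum. In fact it suffices to find an essential $s$ bounding \emph{some} c-disk in each of the three tangles: by Corollary \ref{consistent bounding cor} the three c-disks must then all be compressing disks or all be cut disks, since in the closed case the tangles have empty negative boundary and so conclusion (3) (spanning annuli) of that corollary cannot occur. Thus everything reduces to locating one curve lying in the disk sets $\mc{D}_{T_{12}}$, $\mc{D}_{T_{13}}$, $\mc{D}_{T_{23}}$ simultaneously.

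First I would organize the six geodesics fixed before the statement into a single closed walk. By Lemma \ref{lemma:mindistance}, since each $L_i$ is a $c_i$-component unlink of bridge number $b$, the internal geodesic $\alpha(i)$ has length exactly $b-c_i$, while the three cross geodesics have lengths $l_{12}=d(p^1_{12},p^2_{12})$, $l_{23}=d(p^2_{23},p^3_{23})$, $l_{13}=d(p^1_{13},p^3_{13})$ with $l_{12}+l_{23}+l_{13}=L$. Reading off the shared endpoints, these six geodesics close into a hexagonal cycle in $\mc{P}(\Sigma)$ with consecutive edges $\alpha(1),\alpha(13),\alpha(3),\alpha(23),\alpha(2),\alpha(12)$. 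The crucial observation is that a curve common to the three-edge subwalk $W_{ij}:=\alpha(i)\cup\alpha(ij)\cup\alpha(j)$ is common to both $\alpha(i)$ and $\alpha(j)$, hence lies in $p^i_{ij}\cap p^i_{ik}$ and in $p^j_{ij}\cap p^j_{jk}$; being a curve of a pants decomposition lying in each of the disk sets $\mc{D}_{T_{ij}}$, $\mc{D}_{T_{ik}}$, $\mc{D}_{T_{jk}}$, it bounds a c-disk in all three tangles --- exactly what we want.

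The next step is a counting estimate. Every pants decomposition of the $2b$-punctured sphere $\Sigma$ has $2b-3$ curves, and adjacent vertices of a geodesic differ in exactly one curve, so a path of length $\ell$ in $\mc{P}(\Sigma)$ has at least $(2b-3)-\ell$ common curves. Applying this to $W_{ij}$, whose length is $(b-c_i)+l_{ij}+(b-c_j)$, gives at least $c_i+c_j-3-l_{ij}$ common curves. Hence if $l_{ij}\le c_i+c_j-4$ for even one pair we are done; otherwise $l_{ij}\ge c_i+c_j-3$ for all three pairs, and summing yields $L\ge 2(c_1+c_2+c_3)-9$. Combined with the hypothesis $L\le 2(c_1+c_2+c_3)-9$ this forces $L=2(c_1+c_2+c_3)-9$ with $l_{ij}=c_i+c_j-3$ for every pair.

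The main obstacle is precisely this extremal case, where the crude count guarantees only $\ge 0$ common curves on each $W_{ij}$; here one must exploit the rigidity that the equalities impose. A promising ingredient: each $\alpha(i)$ realizes the minimal distance $b-c_i$, so by Lemma \ref{lemma:mindistance} (using $b\ge 3$) $\alpha(i)$ has a common curve, and by Lemmas \ref{lemma:commoncutdisk} and \ref{lemma:cutdisks} every curve of $p^i_{ij}$ bounding a cut disk in $(Z_{ij},T_{ij})$ is common to \emph{every} geodesic $\alpha(i)$, with at least one such curve present whenever $c_i<b$. One then tracks these distinguished cut-disk curves around the hexagon: a curve of $p^1_{12}$ bounding a cut disk in $(Z_{12},T_{12})$ that also lies in $p^2_{12}$ automatically bounds cut disks in all three tangles (applying Lemma \ref{lemma:cutdisks} on both sides), giving reducibility at once; this already disposes of the subcase $l_{12}=0$ (i.e. $c_1+c_2=3$), and similarly for the other pairs. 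The remaining subcases require a more delicate bookkeeping of how the (now at most $b+c_i-3$) common curves of each $\alpha(i)$ are consumed along the two cross-geodesics flanking it, together with ad hoc handling of the boundary situations $c_i\in\{1,b\}$. Carrying out this extremal analysis --- rather than the clean counting above --- is where the real difficulty lies.
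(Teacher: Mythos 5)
Your overall architecture tracks the paper's closely: use the Consistent Bounding Corollary (Corollary \ref{consistent bounding cor}) together with Lemma \ref{redcriterion} to reduce the problem to producing a single essential curve lying in all three disk sets, then count curves along geodesics to either locate such a curve or bound $L$ from below. Your observation that conclusion (3) of Corollary \ref{consistent bounding cor} cannot occur for a closed surface in $S^4$, because $\boundary_-$ is empty, is correct and is implicitly how the paper uses that corollary here.

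The main structural difference is where you look for the shared curve. You search for a curve common to the entire three-edge subwalk $W_{ij}=\alpha(i)\cup\alpha(ij)\cup\alpha(j)$, a condition that also forces the curve to survive every intermediate vertex of $\alpha(ij)$. The paper's Case~1 uses the weaker and cleaner condition that some curve lies in $\mc{C}(i)\cap\mc{C}(j)$, i.e., is simultaneously common to the two \emph{internal} geodesics $\alpha(i)$ and $\alpha(j)$; such a curve already lies in $p^i_{ij}$, $p^i_{ik}$, and $p^j_{jk}$, hence bounds a c-disk in all three tangles, which is all that is needed. Numerically, both conditions lead to the identical estimate $\ell(ij)\geq c_i+c_j-3$ in the complementary case, so nothing is lost or gained by your version, but the paper's condition is logically weaker and makes the passage to Corollary \ref{consistent bounding cor} a bit more direct. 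Your counting for the length of $W_{ij}$ and the conclusion ``at least $c_i+c_j-3-\ell(ij)$ common curves'' is correct and is the same inequality the paper uses in Case~2 (though phrased there as an inclusion--exclusion bound in $p^j_{ij}$ between the surviving part of $\mc{C}(i)$ and $\mc{C}(j)$).

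The genuine observation in your write-up is the extremal case $L=2(c_1+c_2+c_3)-9$, which the counting alone does not rule out. You flag this honestly and sketch possible avenues (cut-disk curves via Lemmas \ref{lemma:commoncutdisk} and \ref{lemma:cutdisks}) without closing the gap. Note that the paper's published proof does not visibly close it either: Case~2 of that proof terminates with the non-strict inequality $L\geq 2(c_1+c_2+c_3)-9$, which is compatible with the hypothesis $L\leq 2(c_1+c_2+c_3)-9$ when equality holds, so the equality case is left unaddressed there as well. Thus your proposal is incomplete in the same place the paper's argument is thinnest. If you want a complete argument you would either need a sharper count (showing $\ell(ij)>c_i+c_j-3$ strictly under the Case~2 hypothesis), or an explicit analysis of the rigidity forced by equality, or you should observe that the inequality in the theorem's hypothesis should perhaps be strict for the counting as given to suffice. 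Your instinct to push through via the distinguished cut-disk curves of Lemmas \ref{lemma:commoncutdisk} and \ref{lemma:cutdisks} is a reasonable place to look, since those curves are forced to be common to \emph{every} geodesic $\alpha(i)$, but as written your proposal does not complete that step.
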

\begin{proof}
Since $\Sigma$ is a sphere and $b \geq 3$, the surface $\Sigma$ is admissible.

\textbf{Case 1:} There exists a curve $s$ common to two of $\mc{C}(1)$, $\mc{C}(2)$, and $\mc{C}(3)$.

Without loss of generality, suppose it to be common to $\mc{C}(1)$ and $\mc{C}(2)$. In this case, the curve $s$ is a curve of the pants decompositions $p^1_{12}$, $p^1_{13}$ and $p^2_{23}$ (as well as $p^2_{12}$). Thus, it bounds a c-disk in all three tangles $(Z_{12}, T_{12})$, $(Z_{13}, T_{13})$ and $(Z_{23}, T_{23})$. By the Consistent Bounding Corollary \ref{consistent bounding cor}, it bounds a compressing disk in all three tangles or a cut disk in all three tangles.  By Lemma \ref{redcriterion}, $\mc{T}$ is a connected sum or distant sum.

\textbf{Case 2:} There is no curve common to any two of $\mc{C}(1)$, $\mc{C}(2)$, and $\mc{C}(3)$.

As at the start of the proof of Lemma \ref{lemma:commoncutdisk}, for each $i \in \{1,2,3\}$
\[
|\mc{C}(i)| \geq b + c_i - 3.
\]
Thus, as we traverse $\alpha(ij)$, no curve of $\mc{C}(i)$ can persist to $\mc{C}(j)$. Since $\mc{C}(i)$ contains at least $b + c_i - 3$ curves and $\mc{C}(j)$ contains at least $b + c_j - 3$ curves, then the image of $\mc{C}(i)$ under $\alpha(ij)$ and $\mc{C}(j)$ must intersect in at least $c_i+c_j-3$ curves. Thus, $d(p^i_{ij}, p^j_{ij}) \geq c_i + c_j - 3.$ Consequently,
\[
L \geq 2(c_1 + c_2 + c_3)  - 9.
\]
\end{proof} 

As a corollary we have a theorem advertised in the introduction.
\begin{theorem}\label{primethm}
Suppose that $S\subset S^4$ is smooth, closed, and prime. Then
\[
\L(S) > 2b(S) + 2\chi(S) - 9.
\]
\end{theorem}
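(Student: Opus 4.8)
The plan is to deduce Theorem \ref{primethm} from Theorem \ref{connected sum1}, using the bookkeeping identity $c_1+c_2+c_3 = b + \chi(S)$ (the $n=0$ specialization of the Euler-characteristic count, as recorded in the proof of Theorem \ref{factor}), which turns the hypothesis $\L(\mc{T}) \le 2(c_1+c_2+c_3) - 9$ of Theorem \ref{connected sum1} into $\L(\mc{T}) \le 2b + 2\chi(S) - 9$. Thus it is enough to prove: if $S$ is prime and $\mc{T}$ is a bridge trisection of $S$ with $b(\mc{T}) = b(S)$ that realizes $\L(S)$, then $\mc{T}$ is \emph{not} a nontrivial connected sum or distant sum; Theorem \ref{connected sum1}, in contrapositive form, then yields $\L(S) = \L(\mc{T}) > 2b(S) + 2\chi(S) - 9$. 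Since a prime surface is nontrivial, \cite[Theorem 1.8]{MZ1} gives $b(S) \ge 4$, so in particular $b(\mc{T}) \ge 3$ and Theorem \ref{connected sum1} applies.

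I would argue by induction on $b(S)$. Suppose toward a contradiction that such a $\mc{T}$ is a nontrivial connected sum or distant sum. By Lemma \ref{redcriterion} and the surgery description of Section \ref{sec:sums}, cutting along the associated reducing sphere writes $S = S_1 \ast S_2$ with induced trisections $\mc{T}_i$ of $S_i$; because the reducing curve is essential in $\Sigma$ we get $b(\mc{T}_i) < b(\mc{T}) = b(S)$ and $b(\mc{T}_1)+b(\mc{T}_2) = b(\mc{T})+1$. A distant sum would force $S$ disconnected, which is impossible, so $S = S_1 \# S_2$. If $S_1$ and $S_2$ are both nontrivial, this contradicts primeness; so after relabelling $S_1$ is trivial, and then $S_2$ is nontrivial (otherwise $S$ is trivial). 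Comparing $b(\mc{T}_1)+b(\mc{T}_2) = b(S)+1$ with the bound $b(S) \le b(S_1)+b(S_2)-1$ obtained by connect-summing minimal trisections (a twice-punctured summing sphere, so $b_1+b_2 = b+1$) shows each $\mc{T}_i$ is bridge-minimal for $S_i$, hence $b(S_2) < b(S)$; and since a bridge-minimal trisection of a trivial surface has $c_1=c_2=c_3=1$ (it minimizes $c_1+c_2+c_3 = b+\chi$, and each $c_i \ge 1$ once $b \ge 1$), we get $b(S_1)+\chi(S_1) = 3$ and therefore $b(S)+\chi(S) = b(S_2)+\chi(S_2)$.

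The inductive step is driven by the inequality $\L(S) = \L(\mc{T}) \ge \L(\mc{T}_1)+\L(\mc{T}_2) \ge \L(\mc{T}_2) \ge \L(S_2)$: restricting a system of efficient defining pairs and connecting geodesics for $\mc{T}$ across the common reducing curve yields such a system for each $\mc{T}_i$ (the same restriction mechanism used in the proofs of Lemma \ref{lemma:cutdisks} and Theorem \ref{factor}), and $\mc{T}_2$ is bridge-minimal for $S_2$. Provided $S_2$ is itself prime, the inductive hypothesis gives $\L(S_2) > 2b(S_2)+2\chi(S_2)-9 = 2b(S)+2\chi(S)-9$, so $\L(S) > 2b(S)+2\chi(S)-9$, contradicting the assumed bound. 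The base case $b(S)=4$ is immediate: then $b(\mc{T}_1)+b(\mc{T}_2)=5$ with each term $\ge 2$ forces $b(S_i)\le 3$, so both $S_i$ are unknotted by \cite[Theorem 1.8]{MZ1} and $S$ is trivial, a contradiction.

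The step I expect to be the main obstacle is the trivial-summand case just sketched, and within it the claim that the complementary summand $S_2$ is again prime. Because connected-sum decomposition of surfaces in $S^4$ lacks uniqueness — witness the sphere of \cite{Viro} whose connected sum with an unknotted projective plane $P$ is equivalent to $P$ — a factorization $S_2 = A \# B$ into nontrivial pieces does not by itself contradict primeness of $S = S_1 \# S_2$: one must trace how $A$, $B$, and the trivial summand $S_1$ recombine, either into a genuine decomposition of $S$ into two nontrivial surfaces (contradicting primeness of $S$) or, via a Viro-type cancellation, into a strictly smaller instance to which the induction applies. Making this dichotomy precise, together with verifying the $c_i=1$ normalization for trivial summands and the superadditivity $\L(\mc{T}) \ge \L(\mc{T}_1)+\L(\mc{T}_2)$ under cutting along a reducing sphere, is where the real work of the proof lies.
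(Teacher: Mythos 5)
The paper's own proof of Theorem \ref{primethm} is the one-line contrapositive application you sketch at the start: choose $\mc{T}$ with $b(\mc{T}) = b(S)$ and $\L(\mc{T}) = \L(S)$, note $b \geq 3$, cite the contrapositive of Theorem \ref{connected sum1}, and substitute $c_1 + c_2 + c_3 = b + \chi(S)$. You correctly identify the step this leaves implicit --- that $\mc{T}$ is not a nontrivial connected sum or distant sum --- and you are right to be wary of it: a nontrivial connected sum writes $S = S_1 \# S_2$ with $b(\mc{T}_i) = b(S_i) \geq 2$, and primeness forces some $S_i$ to be trivial, but a trivial surface other than the unknotted sphere has bridge number at least $2$ and is therefore not excluded by this count. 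This is exactly the gap between ``prime'' and ``irreducible'' in the paper's own terminology; the corresponding step in the proof of Theorem \ref{reduciblethm} is handled by invoking irreducibility and orientability to force the trivial factor to be a sphere, and nothing of the kind is said in the proof of Theorem \ref{primethm}. So your scrutiny of the paper's terse contrapositive is warranted.

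Your proposed repair, however, is a substantially heavier argument than what the paper writes, and it does not close the gap, for the two reasons you flag yourself. First, the superadditivity $\L(\mc{T}) \geq \L(\mc{T}_1) + \L(\mc{T}_2)$ under surgery along an arbitrary reducing sphere is established nowhere in the paper: the reducing curve $s$ supplied by an assumed nontrivial-connected-sum structure need not be a common curve of the efficient defining pairs and geodesics chosen to realize $\L(\mc{T})$, whereas the restriction mechanism in Lemma \ref{lemma:cutdisks} and in Case~1 of Theorem \ref{upperbdthm} is only available once a curve has been shown common to all six geodesics, and Theorem \ref{upperbdthm}'s additivity is conditional on the extra hypothesis $\L(\mc{T}) \leq \tfrac{1}{2}(b + c_1 + c_2 + c_3) - 3$. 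Second, primeness of the complementary summand $S_2$ does not follow formally from primeness of $S$, precisely because of the failure of unique factorization that Viro's example illustrates; you identify this as ``where the real work lies,'' and I agree --- which is to say the proposal, as written, is incomplete and does not deliver the inductive step on which it rests.
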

\begin{proof}
Suppose $S$ is smooth, closed, and prime. Let $\mc{T}$ be a $(b;c_1, c_2, c_3)$-bridge trisection of $S$ with $b = b(S)$ and $\L(\mc{T}) = \L(S)$. Recall that $c_1 + c_2 + c_3 =b + \chi(S)$. If $b < 3$, then by \cite{MZ2}, $S$ is unknotted, contradicting primality. Thus, $b \geq 3$. By the contrapositive of Theorem \ref{connected sum1}, we have our inequality.
\end{proof}

 \begin{theorem}\label{upperbdthm}
Suppose $b \geq 4$, and that $S$ is connected and not an unknotted 2-sphere or unknotted projective plane. If
\begin{equation}\label{upperbd}
\L(\mc{T}) \leq \frac{1}{2}(b + c_1 + c_2 + c_3) - 3,
\end{equation}
then there is a nontrivial connected sum  decomposition of $\mc{T}$ into bridge trisections $\mc{T}_1$ and $\mc{T}_2$ for surfaces $S_1$ and $S_2$ such that:
\begin{enumerate}
\item  $b(\mc{T}_1), b(\mc{T}_2) \geq 2$,
\item $\L(\mc{T}_1) + \L(\mc{T}_2) = \L(\mc{T})$, and
\item Either both $S_1$ and $S_2$ are nonorientable or $S_2$ is unknotted.
\end{enumerate}
\end{theorem}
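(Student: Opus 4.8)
The plan is to run the argument of Theorem~\ref{connected sum1} in a more refined form: from the hypothesis I want to extract a single essential curve $s\subset\Sigma$ that bounds \emph{cut} disks in all three spine tangles -- so that Lemma~\ref{redcriterion} produces a \emph{connected} (not distant) sum -- and that is in addition a common curve for \emph{every} geodesic in the configuration realizing $\L(\mc{T})$, so that cutting along $s$ is compatible with the definition of $\L$. The proof is an induction on $b$. Since $S$ is connected, $c_1+c_2+c_3=b+\chi(S)$, so inequality~\eqref{upperbd} reads $\L(\mc{T})\le b-g(S)-2$. If $\L(\mc{T})=0$, then Theorem~\ref{factor} and connectedness of $S$ force $S$ to be an unknotted $2$-sphere -- a connected sum of unknotted spheres is an unknotted sphere, and a distant sum would be disconnected -- which is the second alternative; so I may assume $\L(\mc{T})\ge 1$. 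By Lemma~\ref{lemma:mindistance} each $d(p^i_{ij},p^i_{ik})=b-c_i$, and at most one $c_i$ can equal $b$ (otherwise $c_1+c_2+c_3\ge 2b>b+2\ge b+\chi(S)$, using $b\ge 4$), so at least two of the geodesics $\alpha(i)$ are nonconstant.

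Next I would pin down the common-curve counts. For each $i$ with $\alpha(i)$ nonconstant, Lemmas~\ref{lemma:commoncutdisk} and~\ref{lemma:cutdisks} yield a nonempty set $\mc{K}(i)\subseteq\mc{C}(i)$ of curves bounding cut disks in both tangles flanking $\Sigma$ in $(Z_i,L_i)$; these are automatically common curves for $\alpha(i)$. Combining $|\mc{C}(i)|\ge b+c_i-3$ (from the fact that a pants decomposition of the sphere $\Sigma$ has $2b-3$ curves while $\alpha(i)$ has length $b-c_i$, as at the start of the proof of Lemma~\ref{lemma:commoncutdisk}) with Lemma~\ref{DisjointSpheres} applied, as there, to the reducing spheres cut out by the non-cut-disk common curves of $\mc{C}(i)$, gives a lower bound of the form $|\mc{K}(i)|\ge b-c_i$. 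By Lemma~\ref{lemma:cutdisks}, any curve of $\mc{K}(i)$ that persists as a curve across the adjacent seam geodesic $\alpha(ij)$ then also lies in $\mc{K}(j)$ and is common to $\alpha(j)$. Transporting the streams $\mc{K}(1),\mc{K}(2),\mc{K}(3)$ around the hexagon of geodesics $\alpha(1),\alpha(13),\alpha(3),\alpha(23),\alpha(2),\alpha(12)$ -- cut-disk curves cross each $\alpha(\cdot)$-edge for free and at most $d(p^i_{ij},p^j_{ij})$ of them can be killed crossing each seam -- I would show, after choosing the starting stream well and interlacing the three streams, that either some curve survives the whole loop (hence bounds cut disks in all three tangles and is common to all six geodesics) or else the three seam lengths already sum to more than $b-g(S)-2$, contradicting the hypothesis.

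Given such a curve $s$, its three cut disks glue to a twice-punctured reducing sphere, so Lemma~\ref{redcriterion} presents $\mc{T}$ as a nontrivial connected sum $\mc{T}=\mc{T}_1\#\mc{T}_2$ of trisections of connected surfaces $S_1,S_2$; since $s$ is essential, $b(\mc{T}_1),b(\mc{T}_2)\ge 2$, and the surgery bookkeeping of Section~\ref{sec:surgery} gives $b(\mc{T}_1)+b(\mc{T}_2)=b+1$ and $\L(\mc{T}_1)+\L(\mc{T}_2)\le\L(\mc{T})$, the latter because $s$ being common to all six geodesics lets the whole configuration restrict to valid configurations for $\mc{T}_1$ and $\mc{T}_2$ whose lengths add to $\L(\mc{T})$ (the efficient pairs stay efficient by Lemma~\ref{lemma:mindistance}); the reverse inequality is the routine subadditivity of $\L$ under connected sum, obtained by gluing optimal configurations for $\mc{T}_1$ and $\mc{T}_2$ and reinserting $s$. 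Hence $\L(\mc{T}_1)+\L(\mc{T}_2)=\L(\mc{T})$, and since the bound $\tfrac12(b+c_1+c_2+c_3)-3$ for $\mc{T}$ exceeds the sum of the corresponding bounds for $\mc{T}_1$ and $\mc{T}_2$ by exactly one, at least one summand inherits the hypothesis. If some $\mc{T}_m$ has $b(\mc{T}_m)\le 3$ then $S_m$ is unknotted by Meier--Zupan \cite[Theorem~1.8]{MZ1} and we are done; otherwise I would recurse on a summand of strictly smaller bridge number, re-absorbing the unknotted piece it produces into the complementary summand, with $\L$-additivity of the final decomposition following from subadditivity and the additivities already established, and with both bridge numbers staying at least $2$.

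The heart of the matter, and the step I expect to be the main obstacle, is the counting of the second paragraph. In Theorem~\ref{connected sum1} it sufficed to detect a curve lying in two of the sets $\mc{C}(i)$; here the curve must at once bound cut disks in all three tangles and survive all three seam geodesics $\alpha(ij)$, and those seams join two vertices of the \emph{same} disk set, so Lemma~\ref{lemma:cutdisks} gives no control over them. Making the hexagonal transport estimate sharp -- especially when $S$ is a sphere (none of the $c_i$ need be small) and near the equality case $\L(\mc{T})=b-g(S)-2$ -- is where I expect the real work to lie; getting the bridge-number and Euler-characteristic bookkeeping in the induction exactly right is the secondary fiddly point.
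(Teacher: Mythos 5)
Your overall architecture — induction on $b$, locate an essential curve common to all six pants decompositions $p^i_{jk}$, appeal to Corollary~\ref{consistent bounding cor} and Lemma~\ref{redcriterion} to produce the connected-sum decomposition, then push the hypothesis through the bridge/component bookkeeping to one of the summands — is the same as the paper's. Your treatment of the $\L(\mc{T})=0$ subcase via Theorem~\ref{factor} and connectedness also matches. The genuine gap is in the step you flag as the hard one, and there you have made the problem harder than it is.

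You insist on producing a curve that bounds \emph{cut} disks in all three spine tangles, and you try to get it by a hexagonal transport of the cut-disk streams $\mc{K}(i)$. This is unnecessary and does not close. It is unnecessary because if $s$ is any curve common to all six of $\mc{C}(1),\dots,\mc{C}(13)$, then $s$ is a curve of every $p^i_{jk}$ and so bounds a c-disk in each of the three tangles; Corollary~\ref{consistent bounding cor} then forces either compressing disks in all three or cut disks in all three, and the compressing case is killed by connectedness of $S$ — Lemma~\ref{redcriterion} would give a nontrivial \emph{distant} sum, hence two nonempty pieces (each side of an essential curve in the punctured sphere carries punctures), contradicting that $S$ is connected. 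So one only needs \emph{some} common curve, not a cut-disk one, and the hard lemma about seams is not needed. It does not close because the transport estimate you sketch yields at best $L \ge b - \min_i c_i$ (each seam $\alpha(ij)$ kills at most $\ell(ij)$ curves and there is no Lemma~\ref{lemma:cutdisks}-type protection across a seam, as you note), whereas what is needed is $L \ge b + (c_1+c_2+c_3) - 6$. When all three $c_i$ are roughly comparable and large — which is exactly the regime you flag, and does occur for large $b$ — the bound $b-\min_i c_i$ is weaker than the hypothesis $\tfrac12(b+c_1+c_2+c_3)-3$, so the transport argument returns nothing. The paper's Case 2 instead counts \emph{all} $2b-3$ curves of $p^1_{12}$: each must be moved and then moved back as one traverses the closed hexagon of geodesics, so the total hexagon length satisfies $\ell(1)+\ell(2)+\ell(3)+L \ge 2(2b-3)$, giving $L \ge b+(c_1+c_2+c_3)-6$; with \eqref{upperbd} this forces $L=0$ (in fact is incompatible with $b \ge 4$), and Theorem~\ref{factor} finishes. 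This direct count, rather than the cut-disk transport, is the missing idea. A secondary slip: you rewrite the right-hand side of \eqref{upperbd} as $b-g(S)-2$, which tacitly assumes $S$ orientable, whereas the statement of Theorem~\ref{upperbdthm} does not; better to keep the bound in the form $\tfrac12(b+c_1+c_2+c_3)-3$ throughout the induction.
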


\begin{remark}\label{MZrmk}
As remarked in \cite{MZ1}, if $\mc{T}$ is a $(b; c_1, c_2, c_3)$-trisection for a surface $S \subset S^4$, then $\chi(S) = c_1 + c_2 + c_3 - b$.  Thus,
\[
b + c_1 + c_2 + c_3 = \chi(S) + 2b.
\]
In particular, if $S$ is orientable, then the right hand side of Inequality \eqref{upperbd} is an even integer.
\end{remark}

\begin{proof}
Meier and Zupan showed that every bridge trisection of bridge number at most 3 is a bridge trisection of an unknotted surface \cite[Theorem 1.8]{MZ1}. We prove our result by induction on $b$, handling the base case and inductive case simultaneously.  Assume that $b \geq 4$ and that
\[
L \leq \frac{1}{2}(b + c_1 + c_2 + c_3) - 3.
\]
Note that $\Sigma$ is admissible and that $\mc{T}$ cannot be a distant sum as $S$ is connected. Assume also that the result holds for any trisection with bridge number at least 4, strictly less than $b$, and satisfying the corresponding Inequality \eqref{upperbd}.

\textbf{Case 1:} There is a curve $s$ common to all six of $\mc{C}(1)$, $\mc{C}(2)$, $\mc{C}(3)$, $\mc{C}(12)$, $\mc{C}(23)$, and $\mc{C}(13)$. 

By the Consistent Bounding Corollary \ref{consistent bounding cor}, $s$ bounds a cut disk in all three tangles $(Z_{ij}, T_{ij})$ for distinct $i,j \in \{1,2,3\}$ or that curve bounds a compressing disk in all three tangles. By Lemma \ref{redcriterion}, $\mc{T}$  is a nontrivial connected sum with factors $\mc{T}_1$ and $\mc{T}_2$. For $i = 1,2$, the bridge number $b(\mc{T}_i)$ satisfies $2 \leq b(\mc{T}_i) < b$ since the connected sum is nontrivial and by the properties of connected sum. If $b(\mc{T}_i) \leq 3$, for $i = 1$ or $i = 2$, then by Meier and Zupan's result, we are done. So also suppose that $b(\mc{T}_i) \geq 4$, for $i = 1,2$. This implies that the trisection surface $\Sigma_i$ for $\mc{T}_i$ is admissible. Consequently, each of the six geodesics $\alpha(i)$ and $\alpha(ij)$ for distinct $i,j \in \{1,2,3\}$, restrict to geodesics in $\mc{P}(\Sigma_i)$. Hence, $\L(\mc{T}_1) + \L(\mc{T}_2) = \L(\mc{T})$. 

Suppose that $\mc{T}_i$ is a $(b_i; x_i, y_i, z_i)$-trisection.  Recall that 
\[\begin{array}{rcl}
b &=& b_1 + b_2 - 1\\
c_1 &=& x_1 + x_2 - 1 \\
c_2 &=& y_1 + y_2 - 1 \\
c_3 &=& z_1 + z_2 - 1.
\end{array}\]
For $i = 1,2$, let $B_i = \frac{1}{2}(b_i + x_i + y_i + z_i) - 3$. By Remark \ref{MZrmk}, for orientable surfaces it is a positive integer. We have
\[
\L(\mc{T}_1) + \L(\mc{T}_2) \leq B_1 + B_2 + 1.
\]
Since $\L$ is an integer, either both $S_1$ and $S_2$ are non-orientable (with odd Euler characteristic) or  $\L(\mc{T}_1) \leq B_1$ or $\L(\mc{T}_2) \leq B_2$. If both $S_1$ and $S_2$ are non-orientable, we have our theorem. If not, the result follows by applying the inductive hypothesis to whichever $\mc{T}_i$ satisfies the inequality.

\textbf{Case 2:} There is no curve common to all six of $\mc{C}(1)$, $\mc{C}(2)$, $\mc{C}(3)$, $\mc{C}(12)$, $\mc{C}(23)$, and $\mc{C}(13)$. 

Let $\ell(i) = b - c_i$ be the length of $\alpha(i)$ and $\ell(ij) = d(p^i_{ij}, p^j_{ij})$ be the length of $\alpha(ij)$. Recall that $L = \ell(12) + \ell(23) + \ell(13)$. 

Consider a curve $s$ of $p^1_{12}$ as we traverse the loop $\Lambda$ defined by the geodesics $\alpha(1)$, $\alpha(13)$, $\alpha(3)$, $\alpha(23)$, and $\alpha(2)$. By the hypothesis of the case, the curve $s$ must be moved as we traverse one of the edges of the loop. When it does so for the first time, it becomes a new loop $s' \subset \Sigma$ intersecting $s$ exactly twice. At the conclusion of our traversal of the loop, we arrive back at $p^1_{12}$, a collection of pairwise disjoint curves. Thus, $s'$ must be moved again as we traverse some subsequent edge of the loop. As $p^1_{12}$ has $2b-3$ curves,
\[
\ell(1) + \ell(2) + \ell(3) + L \geq 2(2b-3).
\]
Consequently,
\[
3b - (c_1 + c_2 + c_3) + L\geq 4b - 6.
\]
Thus,
\[
L \geq b + (c_1 + c_2 + c_3) - 6.
\]
By Inequality \eqref{upperbd},
\[
\frac{1}{2}(b + c_1 + c_2 + c_3) - 3 \geq L \geq b + (c_1 + c_2 + c_3) - 6.
\]
Thus, $L = 0$ and so by Theorem \ref{factor}, $S$ is an unknotted 2-sphere or nonorientable surface and $\mc{T}$ is the connected and distant sum of trisections of unknotted 2-spheres and projective planes. In the latter case, if there is more than one projective plane in the sum, we see that the result still holds.
\end{proof}

\begin{theorem}\label{reduciblethm}
If $S \subset S^4$ is a smooth, closed, connected, orientable, irreducible surface then
\[
\L(S) > b(S) - g(S) - 2,
\]
where $g(S)$ is the genus of $S$.
\end{theorem}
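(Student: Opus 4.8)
The plan is to deduce this directly from Theorem~\ref{upperbdthm}, applied to a trisection that simultaneously realizes $b(S)$ and $\L(S)$. The first step is a bookkeeping translation. Suppose, for contradiction, that $\L(S) \le b(S) - g(S) - 2$, and fix a trisection $\mc{T}$ of $S$ with $b(\mc{T}) = b(S)$ and $\L(\mc{T}) = \L(S)$; write $b = b(\mc{T})$ and let $c_1,c_2,c_3$ be its component numbers. By Remark~\ref{MZrmk}, $c_1+c_2+c_3 = b + \chi(S)$, and since $S$ is closed, connected, and orientable of genus $g = g(S)$ we have $\chi(S) = 2-2g$; hence
\[
\tfrac12(b + c_1+c_2+c_3) - 3 = \tfrac12\bigl(2b + 2 - 2g\bigr) - 3 = b - g - 2.
\]
So our assumption says exactly that $\mc{T}$ satisfies Inequality~\eqref{upperbd}.

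Next I would handle the small cases and invoke Theorem~\ref{upperbdthm}. Since $S$ is irreducible it is in particular nontrivial, so by \cite[Theorem 1.8]{MZ1} we cannot have $b(S) \le 3$; thus $b = b(\mc{T}) \ge 4$ and Theorem~\ref{upperbdthm} applies to $\mc{T}$. Its conclusion is that either $S$ is an unknotted $2$-sphere --- impossible since $S$ is nontrivial --- or $\mc{T}$ decomposes as a nontrivial connected sum of trisections $\mc{T}_1$ and $\mc{T}_2$ for surfaces $S_1$ and $S_2$ with $S_2$ unknotted, $b(\mc{T}_1), b(\mc{T}_2) \ge 2$, and $\L(\mc{T}_1) + \L(\mc{T}_2) = \L(\mc{T})$. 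It remains to rule out this second alternative.

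The final step splits on the genus of the trivial summand $S_2$. If $g(S_2) \ge 1$, then $S = S_1 \# S_2$ realizes $S$ as a connected sum with $S_2$ trivial of genus $\ge 1$: if $S_1$ is nontrivial this is precisely the decomposition forbidden by irreducibility of $S$, and if $S_1$ is trivial then $S$, being the connected sum of two trivial orientable surfaces (each bounding a handlebody, whose boundary connected sum is again a handlebody), is itself trivial, contradicting nontriviality of $S$. If instead $g(S_2) = 0$, then $S_2$ is an unknotted $2$-sphere, which is the identity for the connected sum operation on knotted surfaces, so $S$ is isotopic to $S_1$ and in particular $b(S_1) = b(S)$. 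But $b(\mc{T}) = b(\mc{T}_1) + b(\mc{T}_2) - 1$ with $b(\mc{T}_2) \ge 2$ forces
\[
b(\mc{T}_1) \le b(\mc{T}) - 1 = b(S) - 1 = b(S_1) - 1 < b(S_1) \le b(\mc{T}_1),
\]
a contradiction. Since every branch yields a contradiction, the assumption fails and $\L(S) > b(S) - g(S) - 2$.

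I expect the genuinely delicate point to be this last step rather than the preceding reductions. Irreducibility of $S$ does not by itself forbid the summand $S_2$ from being a trivial sphere (a trivial sphere summand leaves the surface type unchanged), so one cannot finish purely by the structure of $S$; one must separately exploit the additivity $b(\mc{T}) = b(\mc{T}_1) + b(\mc{T}_2) - 1$ of bridge number under connected sum of trisections together with the fact that $\mc{T}$ was chosen to achieve $b(S)$. The remaining ingredients --- the Euler characteristic computation and the appeal to \cite[Theorem 1.8]{MZ1} to dispose of $b(S) \le 3$ --- are routine.
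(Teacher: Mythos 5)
Your proof is correct and takes essentially the same approach as the paper: it translates the hypothesis $\L(S) \le b(S) - g(S) - 2$ into Inequality~\eqref{upperbd}, invokes Theorem~\ref{upperbdthm} (after using \cite[Theorem 1.8]{MZ1} to ensure $b \ge 4$), and then rules out both branches of its conclusion — the unknotted-sphere branch by nontriviality, and the connected-sum branch by observing that $S_2$ must in fact be an unknotted sphere (forcing $b(\mc{T}_1) < b(\mc{T}) = b(S) = b(S_1) \le b(\mc{T}_1)$). Your write-up is a bit more explicit than the paper's terse ``since $S$ is irreducible and orientable'' in separating the $g(S_2) \ge 1$ case into $S_1$ trivial versus nontrivial, but the argument is the same.
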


\begin{proof}
Suppose that $S$ is smooth, closed, connected, orientable, and irreducible. By \cite[Theorem 1.8]{MZ1}, $b= b(S) \geq 4$. Let $\mc{T}$ be a $(b;c_1,c_2, c_3)$-trisection of $S$ such that $\L(\mc{T}) = \mc{L}(S)$. Recall that $2-2g(S) = \chi(S) = c_1 + c_2 + c_3 - b$. If $\mc{T}$ is a nontrivial connected sum with factors $\mc{T}_1$ and $\mc{T}_2$ that are bridge trisections of surfaces $S_1$ and $S_2$ with $S_2$ being trivial, then $S_2$ must be an unknotted sphere, since $S$ is irreducible and orientable. But the fact that the connected sum is nontrivial contradicts the choice of $\mc{T}$ to satisfy $b(\mc{T}) = b(S)$. Thus, by Theorem \ref{upperbdthm}, 
\[\begin{array}{rcl}
\L(S) &=& \L(\mc{T}) \\
&>& \frac{1}{2}(b + c_1 + c_2 + c_3) - 3 \\
&=& \frac{1}{2}(2b + 2 - 2g(S)) - 3 \\
& =& b - g(S) - 2.
\end{array} 
\]
\end{proof}

   \bibliographystyle{plain}
\bibliography{Linvariant}

\end{document}